\pdfoutput=1 
\documentclass[twoside,11pt]{article}

\usepackage{amsmath, mathtools, amssymb, amsthm}
\usepackage{mathrsfs}
\usepackage{natbib}
\usepackage{microtype}
\usepackage{enumerate, multicol}
\usepackage{bm}
\usepackage{bbm}
\usepackage[linesnumbered,ruled,vlined]{algorithm2e}
\usepackage[font=small,labelfont=bf]{caption}
\usepackage{array} 
\usepackage[table]{xcolor}
\usepackage{tikz}
\usetikzlibrary{positioning, arrows, shapes.geometric, calc}
\usepackage{fullpage}
\usepackage[colorlinks=true, linkcolor=blue, urlcolor=blue, citecolor=blue]{hyperref}
\usepackage[linesnumbered,ruled,vlined]{algorithm2e}
\usepackage{breakcites}


\definecolor{gaussianfill}{RGB}{220,230,250}
\definecolor{kssgfill}{RGB}{220,250,230}
\definecolor{usgfill}{RGB}{245,230,250}

\SetKwInput{KwInput}{Input}                
\SetKwInput{KwOutput}{Output}              
\SetKwRepeat{KwRepeat}{repeat}{until}
\SetKw{Break}{break}
\SetKw{Continue}{continue}
\SetKw{Import}{import}

\theoremstyle{plain}
\newtheorem{theorem}{Theorem}[section]
\newtheorem{lemma}[theorem]{Lemma}
\newtheorem{corollary}[theorem]{Corollary}
\newtheorem{proposition}[theorem]{Proposition}
\newtheorem*{theorem*}{Theorem}
\newtheorem*{corollary*}{Corollary}
\newtheorem*{lemma*}{Lemma}
\newtheorem*{proposition*}{Proposition}

\theoremstyle{definition}
\newtheorem{definition}{Definition}[section]
\newtheorem*{definition*}{Definition}

\newtheorem*{remark*}{Remark}
\newtheorem{example}{Example}
\newtheorem*{example*}{Example}

\newtheorem*{fact*}{Fact}

\numberwithin{equation}{section}

\def\T{^\top}

\newcommand{\npar}{\vspace{0.5em}\par }

\allowdisplaybreaks

\begin{document}

\begin{center}
    {\LARGE Efficient Robust Constrained Signal Detection via Kolmogorov Width Approximations}

    {\large
        \begin{center}
            Yikun Li and Matey Neykov
        \end{center}}

    {Department of Statistics and Data Science\\ Northwestern University\\Evanston, IL 60208\\\texttt{yikunli2028@u.northwestern.edu}~~~~~~~~\texttt{mneykov@northwestern.edu}}
\end{center}

\begin{abstract}
Robust statistical inference often faces a severe computational-statistical gap when dealing with complex parameter spaces. We investigate minimax signal detection in the Gaussian sequence model under strong $\epsilon$-contamination, where the signal belongs to a general prior constraint $K$. Existing optimal tests require computing the exact Kolmogorov $k$-width of $K$, a computationally intractable task for general non-trivial sets. We bridge this gap by proposing a polynomial-time testing framework that universally applies to balanced, type-2, and exactly 2-convex constraints. By leveraging a semidefinite programming relaxation and a modified ellipsoid method equipped with an approximate subgradient oracle, we efficiently approximate the Kolmogorov widths. Remarkably, our unconditional efficient algorithm achieves a robust detection boundary that matches existing upper bounds up to a mere polylogarithmic factor. This establishes a computationally tractable testing solution for a broad class of structured signals without requiring prior knowledge of their exact geometric complexity.
\end{abstract}

\tableofcontents

\section{Introduction}\label{section: introduction}

\subsection*{Robust Statistics}

A central challenge in the application of modern statistical estimation and hypothesis testing lies in specifying the true underlying data distribution. Traditionally, statistical frameworks operate under the idealized assumption that observations are sampled from a well-behaved parametric family, such as the Gaussian distribution. However, this assumption is frequently violated in real-world applications. These violations typically arise from experimental measurement inaccuracies (e.g., \cite{leek2010tackling}), heavy-tailed phenomena inherent in quantitative finance (e.g., \cite{Cont01022001}), or deterministic adversarial outliers, such as data poisoning in machine learning (e.g., \cite{10.5555/3294996.3295110, pmlr-v80-yin18a}). In the presence of such anomalies, even a small fraction of corrupted data can catastrophically compromise standard inference procedures. To illustrate, consider the fundamental task of estimating the population mean from a sample $x _{1},x _{2},\dots,x _{n}$ drawn from $\mathcal{N}(\mu ,1)$. While the sufficient statistic $\bar{x}=\frac{1}{n}\sum\limits_{i=1}^{n}x _{i}$ is minimax optimal in both the estimation problem (under the $\ell _{2}$ norm loss) and the hypothesis testing problem (for prescribed Type I and Type II error), an adversarial perturbation to just a single observation in $\left\{x _{i}\right\}_{i=1}^{n}$ can arbitrarily skew $\bar{x}$ (e.g., forcing it to zero), effectively nullifying its reliability. Conversely, robust alternatives such as the sample median of $\left\{x _{i}\right\}_{i=1}^{n}$ exhibit significantly greater resilience against such data modifications.

Recognizing such vulnerabilities, statisticians have long emphasized the critical role of robustness in methodological development. One of the foundational works formalizing this field was presented by \cite{10.1214/aoms/1177703732}, who introduced the \textit{Huber contamination model} (also known as the \textit{$\epsilon$-contamination model}). In this model, rather than observing samples exclusively from a presumed nominal distribution $\mathbb{P}$, one assumes the data generation process is corrupted by an unknown, adversarially chosen distribution $\mathbb{Q}$. Consequently, the observations are drawn from the mixture distribution $(1-\epsilon)\mathbb{P}+\epsilon\mathbb{Q}$ for some contamination proportion $\epsilon \in (0, 1)$. Crucially, the adversary's choice of $\mathbb{Q}$ is permitted to depend on $\mathbb{P}$. To achieve robust estimation under this framework, Huber also introduced the Huber loss—a piecewise function combining quadratic and linear penalties —-- as a resilient estimation criterion. An even more formidable adversary operates within the \textit{strong $\epsilon $-contamination model}. Under this framework, for a given contamination fraction $\epsilon \in (0,1)$, the adversary is permitted to inspect the original uncorrupted samples and replace up to an $\epsilon $ fraction of them with arbitrary malicious values. This replacement process is highly adaptive, as the adversary possesses full knowledge of the original sample realization, the underlying data-generating mechanism, and even the downstream inference procedures to be deployed. This full adaptivity renders robust inference significantly more challenging than under the standard Huber contamination model. The strong $\epsilon$-contamination model serves as the primary focus of this work (see Definition \ref{def_epsilon_contamination_model} for a formal formulation). Also see \cite{Diakonikolas2019RecentAI} for a comprehensive overview of various contamination models. Alternatively, even in the absence of an explicit adversary, the true underlying distribution may inherently deviate from standard parametric assumptions. A prominent example is the heavy-tailed distribution, where the tail probability decays polynomially rather than exponentially (as is characteristic of Gaussian or exponential distributions). In such scenarios, one typically relies on milder moment assumptions. A substantial body of literature has been developed within this framework. For instance, \cite{10.1214/11-AIHP454} demonstrates that it is possible to recover sub-Gaussian concentration bounds for heavy-tailed distributions with only finite variance. Furthermore, \cite{dd78bb05-37b9-3dee-a39a-31654511afe1} extend this approach to high-dimensional heavy-tailed settings. For related work on the estimation of covariance matrices under similar conditions, see \cite{minsker2018subgaussianestimatorsmeanrandom}. Below we introduce the concepts of Kolmogorov $k$-width and the ellipsoid method algorithm, which play central role in the derivation of the approximation framework in this work.

\subsection*{Kolmogorov $k$-Width}

The Kolmogorov $k$-width (often referred to as the Kolmogorov $n$-width) quantifies the optimal degree to which a subset of a Banach space can be approximated by a $k$-dimensional subspace under a specified norm (see Definition \ref{def_kolmogorov_width}). Originally introduced by \cite{2ad9b75f-e5d2-3a8c-8211-9d37aa110eb2}, it has emerged as a foundational concept in approximation theory. The analytical strength of this measure lies in its generality: rather than evaluating a specific projection, it takes an infimum over all possible $k$-dimensional subspaces, thereby establishing a fundamental information-theoretic limit. As a result, it serves as an indispensable tool for deriving minimax error bounds for linear estimators, particularly when approximating functions subject to specified smoothness conditions (\cite{pinkus2012n,tsybakov2009introduction}). This paradigm later inspired the development of the Gelfand $k$-width (\cite{tikhomirov1960}), a metric that now plays a pivotal role in the theory of compressed sensing (\cite{10.1109/TIT.2006.871582}). Beyond classical statistics, the Kolmogorov $k$-width is extensively utilized in the numerical analysis of partial differential equations (PDEs), where a rapid decay rate of the width theoretically guarantees the existence of highly accurate approximate solutions (\cite{cohen2010convergence}). For modern extensions integrating this concept with neural networks and optimal transport in PDE settings, see the recent works by \cite{PAPAPICCO2022114687} and \cite{Arbes2025}.

\subsection*{Ellipsoid Method}

Independently pioneered by \cite{Shor1977} and \cite{Yudin1976}, the celebrated ellipsoid method represents a cornerstone in convex optimization. Algorithmically, it operates by querying a separation oracle at each iteration to prune a region mathematically guaranteed to exclude the global optimizer, thereby producing a sequence of enclosing ellipsoids with exponentially decaying volumes. The method achieved historic prominence when \cite{Khachiyan1979} leveraged it to establish the first polynomial-time complexity bound for linear programming. A profound theoretical consequence of this framework is the polynomial-time equivalence of the separation and optimization problems (\cite{Grotschel1981}). From the view of information-based complexity, the ellipsoid algorithm tightly characterizes the fundamental minimax lower bounds on oracle complexity for general non-smooth convex optimization (\cite{Nemirovski1983}). While it has largely been supplanted by interior-point methods in practical implementations due to their superior empirical speed, the ellipsoid method remains important in theoretical computer science for resolving the computational complexity and theoretical tractability of algorithmic problems.

\subsection{Problem Formulation and Our Contribution}\label{subsection: problem formulation and our contribution}
This paper investigates the minimax $\ell _{2}$-norm signal detection problem in the Gaussian sequence model. Departing from classical unconstrained paradigms, we impose a prior structural constraint on the mean vector, restricting it to a known set $K$, i.e., $\mu \in K$. More broadly, we formulate this problem within a robust framework, wherein the pristine observations are subject to adversarial corruption. Namely, we assume the samples can be arbitrarily modified by an adversary $\mathcal{C}$ operating under the strong $\epsilon $-contamination model, with a known upper bound $\epsilon $ on the contamination fraction.

Specifically, assume $\tilde{Y} _{i}=\mu +\sigma \cdot \xi _{i}\in \mathbb{R}^{d}, i=1,2,\dots,N$, where $\sigma >0$ is the noise scale and $\xi _{i}\sim \mathcal{N}(0,\mathbf{I}_{d})$ are standard multivariate Gaussian random variables. Denote the observations (contaminated samples) as $\mathbf{Y}:=\mathcal{C}(\tilde{\mathbf{Y}})=\left\{\mathcal{C}(\tilde{Y}_{1}),\dots,\mathcal{C}(\tilde{Y}_{N})\right\}$. The goal of the problem is to test the following hypotheses with uniform small Type I and Type II errors over $\mu \in K$ and $\mathcal{C}$ based on $\mathbf{Y}$:
\begin{equation}\label{eq: testing problem}
    \begin{aligned}
        & H _{0}: \mu =0,\\ 
        & H _{1}: \left\lVert \mu \right\rVert_{2}^{}\ge \rho ,\mu \in K.
    \end{aligned}
\end{equation}
From an information-theoretic perspective, if the signal $\mu $ is too close to the origin (i.e., the separation $\rho $ is insufficiently large), $H _{0}$ and $H _{1}$ are statistically indistinguishable, even in the pristine, uncontaminated setting. Consequently, our goal is not to solve \eqref{eq: testing problem} for all possible $\rho > 0$, but to characterize the fundamental minimal separation $\rho $ that renders the testing problem theoretically possible and computationally tractable. We investigate this fundamental limit through the minimax framework. Given a desired Type I error bound $\alpha $ (e.g., $\alpha =0.05$), we introduce the family of valid level-$\alpha $ tests, which uniformly control the false positive rate over all arbitrary adversaries $\mathcal{C}$:
\begin{equation}\label{eq: def_A_s}
A _{s}(N,d,K,\epsilon ,\alpha ,\sigma ):=\left\{\phi :\sup\limits_{\mathcal{C}}\mathbb{P}_{0}\left(\phi (\mathcal{C}(\tilde{\mathbf{Y}}))=1 \right)\le \alpha\right\},
\end{equation}
where $\mathbb{P}_{0}$ represents the distribution of the observations $\tilde{\mathbf{Y}}$ under the null hypothesis $H _{0}$. When there is no ambiguity, we suppress the explicit parameter dependence and simply write $A _{s}$.
With the robust level-$\alpha $ tests established, we define the critical minimax separation rate $\rho (N,d,K,\epsilon ,\alpha ,\sigma )$ by
\begin{equation}\label{eq: def_critical_rho}
\rho (N,d,K,\epsilon ,\alpha ,\sigma ):=\inf\limits_{\rho }\left\{\rho :\rho >0,\sup\limits_{\left\lVert \mu \right\rVert_{2}^{}\ge \rho ,\mu \in K}\sup\limits_{\mathcal{C}}\mathbb{P}_{\mu }\left(\phi (\mathcal{C}(\mathbf{Y}))=0\right)\le \alpha, \phi \in A _{s} \right\}.
\end{equation}
This rate characterizes the fundamental limit of testability: it is the smallest radius $\rho > 0$ ensuring that there exists a robust test controlling the Type II error by $\alpha $, regardless of the true signal $\mu \in K$ and the adversarial strategy $\mathcal{C}$. When no ambiguity arises, we simply denote this minimax rate by $\rho _{\text{critical}}$. This paradigm, which involves fixing the Type I error and subsequently minimizing the Type II error, is known as the Neyman-Pearson framework (\cite{05dc6f2b-c4e2-3fa9-b599-5af9d89d8396}).

In this work, we exclusively consider the \hyperref[def_epsilon_contamination_model]{strong $\epsilon $-contamination model}, which is formally introduced below.

\begin{definition}[Strong $\epsilon $-contamination model]\label{def_epsilon_contamination_model}
  Consider a set of $N$ uncorrupted observations $\tilde{\mathbf{Y}}=\left\{\tilde{Y}_{1},\dots,\tilde{Y}_{N}\right\}$ generated independently from a true underlying distribution $\mathcal{P}$ under either $H _{0}$ or $H _{1}$. For a known contamination fraction $\epsilon \in [0, \frac{1}{2})$, an omniscient adversary $\mathcal{C}$ with full knowledge of $\mathcal{P}$ is permitted to inspect the exact realization of $\tilde{\mathbf{Y}}$. The adversary may then maliciously substitute up to $\lfloor \epsilon N \rfloor$ pristine samples with arbitrary values. The resulting corrupted dataset $\mathbf{Y}=\mathcal{C}(\tilde{\mathbf{Y}})$ is provided to the statistician while the corruption index set $C$ is unknown.
\end{definition}

While the process of contamination is very common, as mentioned, $\mathcal{C}$ from the strong $\epsilon $-contamination model represents the most powerful agent for the reason that it has the full knowledge of not only the original data $\tilde{\mathbf{Y}}$, but also the data generation mechanism $\mathbb{P}_{0},\mathbb{P}_{\mu }$ and even the downstream developed algorithms. Consequently, it leads to the strongest minimax lower bounds for the testing problem \eqref{eq: testing problem}.

We note here that for the testing problem formulated above, when the constraint $K$ is a quadratically convex orthosymmetric (QCO) set (see Definition \ref{def_QCO_set} for a formal definition), the optimal minimax rate has been established by \cite{li2026robustsignaldetectionquadratically}, where the information-theoretic lower bound is matched by a theoretical algorithm and an efficient algorithm except for poly-logarithmic factors in $N,d,\frac{1}{\alpha },\frac{1}{\epsilon }$ (see Section \ref{section: testing procedures} and Appendix \ref{section: theoretical algorithm} for a comprehensive review of their results). While the efficient algorithm enjoys a polynomial-time complexity, the implementation is conditional --- the efficiency depends on the computability of the Kolmogorov $k$-width. Though the computation is feasible and efficient for some common constraint sets such as the whole space $\mathbb{R}^{d}$, axes-aligned hyperrectangles $H _{d}(a _{1},\dots,a _{d})$, ellipsoids $E _{d}(a _{1},\dots,a _{d})$ and some other QCO sets with explicit expressions, the general computability is not guaranteed efficient or even not permissible (see the discussions in Section \ref{subsection: relaxed optimization problem}). Therefore, this work aims to extend their results from the computational aspect. Specifically, we show that it is possible to relax the definition of the Kolmogorov $k$-width, and consequently reduce the computation of the Kolmogorov widths to another optimization problem where approximate and efficient solution exists. The constraints that qualify the approximation above is even beyond the QCO sets. We also show that such relaxation and approximation do not harm the original upper bounds significantly in the sense that it only introduces additional poly-logarithmic factors. In this view, the unconditional efficient algorithms for the testing problem \eqref{eq: testing problem} exist for a wide range of constraints.

\subsection{Related Work}

We review some related works in this section. The minimax testing problem in Gaussian models without adversary has been intensively studies since last century. Some pioneer works include \cite{ingster1982minimax}, where Ingster studies and establishes one of the first results in the nonparametric signal detection problem for the unconstrained case; for the constrained case, \cite{ingster1993asymptotically1} studies the testing problem with general ellipsoidal constraints; \cite{suslina1996minimax} studies the problem under the constraint of a $\ell _{q}$ ellipsoid with a $\ell _{p}$ ball removed; \cite{bj/1078435219} studies such testing problem for $\mu \in \mathbb{R}^{\infty }$ with sparse constraint, and the minimax rate was also extended to the ellipsoidal constraint. For the counterpart estimation problem, \cite{10.1214/aos/1176347758} studies the problem with the hyperrectangle constraints in the minimax sense and proved that hyperrectangles show certain least favorable property. For recent advances in the estimation problem, \cite{prasadan2025informationtheoreticlimitsrobust} establish the optimal minimax rate of estimation for star-shape constraint. \cite{neykov2026polynomialtimenearoptimalestimationcertain} establishes a computationally tractable framework for the robust estimation. While their approach similarly capitalizes on Kolmogorov width approximations, it relies on a distinctly different optimization machinery --- one that would entail an intractable exponential-time complexity if naively adapted to our hypothesis testing framework. Finally, see \cite{ingster1993asymptotically1,ingster1993asymptotically2,ingster1993asymptotically3} for the significant difference between the testing and estimation problems.

For the robust statistics, since the fundamental work by \cite{10.1214/aoms/1177703732}, various works under such framework emerged. Some recent advances include \cite{Chen2015AGD}, where the authors establish a general decision theory under the Huber contamination model and applied such theory to the high-dimensional covariance matrix using the Scheff\'e estimate, and \cite{Du2018RobustNR} where the authors construct a local binning median for the robust nonparametric regression problem and proved its minimax optimality over the H\"older function class with smoothness parameters smaller or equal to $1$. The works related to the strong $\epsilon $-contamination model (\cite{Diakonikolas2019RecentAI}) thrive after 2010s. Some important works include \cite{Narayanan2022PrivateHH,10.5555/3495724.3496571,10353143}, where the optimal minimax rates of the mean testing problem under oblivious adversaries (including the Huber contamination model) and adaptive adversaries (including the \hyperref[def_epsilon_contamination_model]{strong contamination model}) are established in the unconstrained settings. A recent work by \cite{li2026robustsignaldetectionquadratically}, as mentioned, establishes the optimal minimax rate for the robust testing problem \eqref{eq: testing problem} under the strong $\epsilon $-contamination model when the constraint $K$ is a QCO set, which includes hyperrectangles and general ellipsoids as special cases.

\subsection{Notation}

In this work, we use the following notations for consistency. We assume the constraint $K \subset \mathcal{X}=\mathbb{R}^{d}$ without special notice. Let $\tilde{\mathbf{Y}}:=\left\{\tilde{Y} _{1},\tilde{Y}_{2},\dots,\tilde{Y}_{N}\right\}$ denote the original authentic samples, where $N$ is the sample size. $\mathcal{C}$ is the adversary from the \hyperref[def_epsilon_contamination_model]{strong $\epsilon $-contamination model} and $\mathcal{C}(\tilde{\mathbf{Y}})$ is the observations contaminated by $\mathcal{C}$. Let $C$ be the index set on which the samples are contaminated. $[n]$ is used to represent the set $\left\{1,2,\dots,n\right\}$. When $I \subset [N]$ is a index set, let $\mathbf{Y}_{I}$ (or $\tilde{\mathbf{Y}}_{I}$) denote the submatrix of $\mathbf{Y}$ (or $\tilde{\mathbf{Y}}$) where the rows are selected according to $I$. By our assumptions on the contamination process, we know $\left|C\right|\le \epsilon N$ and we have $\tilde{\mathbf{Y}}_{[N]\backslash C}=\mathbf{Y}_{[N]\backslash C}$, where $\left|S\right|$ is the cardinality of $S$ when $S$ is a set. $\rho $ denotes the $\ell _{2}$ norm of the mean vector $\mu $ under the alternative. We use $\left\langle \cdot , \cdot  \right\rangle $ and $\left(\cdot \right)^{\top } \left(\cdot \right)$ alternatively to denote the inner product in a Hilbert space, probably with a subscript. The common notations $\mathcal{O},\varTheta ,\varOmega$ are used to represent the relative asymptotic orders with a possible subscript indicating the related quantity. The lowercases $c, c _{1},c _{2},\dots$ represent the constants that might vary from line to line.

\subsection{Organization}

This paper is structured as follows. Section \ref{section: linear approximation} formally introduces the concepts of the Kolmogorov $k$-width, optimal dimensions and optimal projections. This section also introduces the relaxed optimization problem related to the Kolmogorov $k$-width for the sake of the approximation later. A significant result related to the approximate solution to the constrained quadratic maximization by \cite{10.1145/3406325.3451128}, which is a crucial tool for this work, is also reviewed. Section \ref{section: approximate subgradient and ellipsoid method} introduces the modified ellipsoid method leveraged in the approximation to the Kolmogorov $k$-width mentioned above. It also contains the convergence analysis for the theoretical guarantee of the efficiency. Section \ref{section: testing procedures} is the core of the main text, where it introduces the unconditional efficient testing procedures for the problem \eqref{eq: testing problem} based on the results from Section \ref{section: linear approximation} and \ref{section: approximate subgradient and ellipsoid method} and the work by \cite{li2026robustsignaldetectionquadratically}. Finally, Section \ref{section: discussion} contains the relevant discussions and the possible directions for future works. We place the theoretical algorithm and proof of the corresponding upper bound in Appendix \ref{section: theoretical algorithm} for readers' reference. Some probabilistic preliminaries and the proofs of the lemmas, theorems, and corollaries that are not suitable for the main text are deferred to Appendix \ref{section: deferred proofs}.

\section{Linear Approximations}\label{section: linear approximation}

\subsection{\texorpdfstring{Kolmogorov $k$-Width and Optimal Dimensions}{Kolmogorov k-Width and Optimal Dimensions}}\label{subsection: Kolmogorov width}

Definition \ref{def_kolmogorov_width} below formally introduces the Kolmogorov $k$-width leveragd in this work.

\begin{definition}[Kolmogorov $k$-width]\label{def_kolmogorov_width}
    Let $\mathcal{X}$ be a Banach space equipped with the norm $\left\lVert \cdot \right\rVert_{}^{}$, and $K \subset \mathcal{X}$ is any subset. The $k$-dimensional Kolmogorov width is defined as 
    \begin{equation}\label{eq: definition of Kolmogorov width}
        D _{k}(K) = \inf_{P \in \mathcal{P}_{k}} \sup_{\theta \in K} \left\lVert \theta -P \theta \right\rVert_{}^{},
    \end{equation}
    where $\mathcal{P}_{k}$ is the set of all projection operators that project a vector onto a subspace in $\mathcal{X}$ with intrinsic dimension $k$.
\end{definition}

\begin{remark*}[]
  For a fixed $\theta \in \mathcal{X}$, the ``projection operator'' in the definition above should be understood as finding the point $P \theta \in K$ such that the the distance between $\theta $ and $P \theta $ is (nearly) minimal. When the norm in \eqref{eq: definition of Kolmogorov width} is the $\ell _{2}$ norm, it can be characterized by some projection matrix.
\end{remark*}

Throughout this work, we operate in the Euclidean space $\mathcal{X}=\mathbb{R}^{d}$ and define the Kolmogorov width with respect to the $\ell _{2}$-norm, in line with standard practice. We also restrict the candidate set $\mathcal{P}_{k}$ to orthogonal projections. We further note that in the $\ell _{2}$ norm separation, the Kolmogorov width is defined originally as in Definition \ref{def_kolmogorov_width}. However, in the case of $\ell _{p}$ norm separation, such definition should be discretized, i.e., only consider the projections that are aligned with the axes. This topic is beyond the scope of this work, and we kindly refer the reader to Section 4 of \cite{li2026robustsignaldetectionquadratically} for more details.

By definition, the Kolmogorov width $D _{k}(K)$ is monotonically non-increasing with respect to the subspace dimension $k$. A crucial determinant for characterizing the geometric and statistical properties of the subset $K$ within the ambient space $\mathcal{X}$ is the exact rate of this decay. Intuitively, a rapid decay rate implies that $K$ can be efficiently approximated by low-dimensional subspaces, reflecting an intrinsically lower structural complexity. For instance, the class of analytic functions on a compact interval exhibits an exponential decay rate, bounded by $D _{k}(K)\asymp e ^{-ck}$ for some absolute constant $c>0$. Conversely, a slow decay rate indicates that the set resists efficient linear approximation. A canonical example of this is the Sobolev space $W _{p}^{r}$, whose Kolmogorov width decays polynomially as $D _{k}(K)\asymp k ^{-\alpha }$, where the exponent $\alpha $ is governed by the smoothness parameter of the space. For a comprehensive exposition on the exact calculation of Kolmogorov widths for various reproducing kernel Hilbert spaces and Sobolev spaces, we refer the reader to \cite{pinkus2012n}.

In the context of statistical inference and hypothesis testing, however, the intrinsic geometric properties of the subset $K$ and the whole space $\mathcal{X}$ are of secondary interest compared to their interaction with the true data-generating mechanism. This distinction, coupled with the concept of approximation decay rates, motivates us to define new complexity measures. These quantities are specifically designed to bridge the geometric structure of the parameter space—characterized by its decay rate—with the statistical nature of the underlying distributions.

\begin{definition}[Optimal dimension]\label{def_optimal_dimension}
  Assume $\tilde{\mathbf{Y}}$ are drawn from the distribution $\mathcal{P}\in \mathscr{P}$. Let the function $f(\cdot ,\cdot ):\mathbb{N}\times \mathscr{P}\mapsto \mathbb{R}^{+}$, then a optimal dimension regarding $K$ and $f$ is defined as 
  \begin{equation*}
    k _{f} ^{\star }:=\max\limits \left\{j \left\lvert\right. j \ge 0,D _{j-1}(K)>f(j,\mathcal{P})\right\},
  \end{equation*}
  where we additionally define $D _{k}(K)=\infty $ for $k<0$ and $D _{k}(K)=0$ when $k \ge d=\dim (\mathcal{X})$. The corrsponding optimal projection with dimension $k$ is denoted as $P _{k}^{\star }$.
\end{definition}

\begin{remark*}[]
  Such definition can be extended to $\ell _{2}$ with minimal effort.
\end{remark*}

The index $j-1$ in the definition is set for the convenience of the computation and is not essentially important. Since $D _{k}(K)=0$ for $k \ge d$ and $D _{0}(K)=\sup\limits _{\theta \in K}\left\lVert \theta \right\rVert_{2}^{}>0$ when $K \neq \left\{\mathbf{0}  \right\}$, such $k _{f}^{\star }$ always exists given $K \neq \left\{\mathbf{0}\right\}$. It is notable that $D _{k}(K)$ is irrelevant with $\tilde{\mathbf{Y}}$, $\mathbf{Y}$, and $\mathcal{P}$, while $f(j,\mathcal{P})$ is irrelevant with $\tilde{\mathbf{Y}}$, $\mathbf{Y}$, $\mathcal{X}$ and $K$. Therefore, $k _{f}^{\star }$ is a quantity characterizing the interaction between $K$, $\mathcal{X}$, and $\mathcal{P}$, and does not depend on the realization $\tilde{\mathbf{Y}}$ and $\mathbf{Y}$ once $K$, $\mathcal{X}$ and $\mathcal{P}$ are assumed.

The optimal dimension is closely related to the relevant minimax problem. In \cite{li2026robustsignaldetectionquadratically}, the authors set $f (j,\mathcal{P})$ as $\frac{j ^{1/4}}{\sqrt{N}}\sigma $ and $\frac{j ^{1/4}\sqrt{\epsilon }}{N ^{1/4}}\sigma $ in Definition \ref{def_optimal_dimension}, respectively, and the resulting optimal dimensions (denoted as $k _{1}^{\star }$ and $k _{2}^{\star }$) are crucial for the derived optimal minimax rate for QCO constraints. See Section \ref{subsection: a simple case} and \ref{subsection: robust testing} for more details; In \cite{prasadan2025informationtheoreticlimitsrobust}, the authors obtain the optimal minimax rate for the $\ell _{2}$ robust estimation problem with star-shape constraints. Though they do not explicitly specify the optimal dimension, there results can be summarized via such concept. Let $f (j,\mathcal{P})=\sqrt{\frac{j}{N}}\sigma $ in Definition \ref{def_optimal_dimension} and denote the resulting optimal dimension by $k _{e}^{\star }$. \cite{prasadan2025informationtheoreticlimitsrobust} prove that 
\begin{equation}\label{eq: lower bound of estimation}
    \inf\limits_{\hat{\nu }} \sup\limits_{\mu \in K} \mathbb{E}_\mu \left\lVert \hat{\nu }(\mathbf{Y})-\mu \right\rVert_{2}^{2}\asymp \sigma ^{2}\max\limits \left\{\frac{k _{e}^{\star }}{N},\epsilon ^{2}\right\},
\end{equation}
where $\inf $ is taken with respect to all measurable functions $\hat \nu$ of the observations $\mathbf{Y}$.

\subsection{Relaxed Optimization Problem}\label{subsection: relaxed optimization problem}

When $K$ is an ellipsoid defined by a positive semidefinite matrix and the distance is measured under the Euclidean norm, calculating the optimal $k$-dimensional approximation is equivalent to principal component analysis (PCA). However, computing the Kolmogorov $k$-width for general sets and arbitrary norms is a formidable task, typically incurring a super-polynomial time complexity.

As an example in $\mathbb{R}^{d}$, \cite{brieden2002geometric} demonstrate that computing $D_0(K)$ for convex polytopes (even symmetric ones) cannot be approximated in polynomial time within a factor of $1.090$ unless $\text{P}=\text{NP}$. Even when restricting the metric to the $\ell _{2}$-norm, the geometric representation of $K$ can be highly complex; in certain scenarios, $K$ might only be accessible via a membership oracle rather than an explicit analytical expression, further exacerbating the computational intractability of the Kolmogorov widths.

To circumvent the computational intractability of exactly evaluating the Kolmogorov widths and identifying their associated optimal projections, we propose a computationally efficient alternative. In particular, we formulate the following semidefinite programming (SDP) relaxation, which allows us to tightly approximate the Kolmogorov widths in polynomial time.

\begin{definition}[Approximate Kolmogorov $k$-Width]\label{def_approximate_kolmogorov_width}
  Let $\mathcal{X}=\mathbb{R}^{d}$ equipped with the Euclidean norm. For any $K \subset \mathcal{X}$ and $0 \le k \le d$, we define the approximate Kolmogorov $k$-width $\tilde{D}_{k}^{2}(K)$ as the optimal value of the following SDP.
  \begin{equation}\label{eq: SDP problem}
  \begin{aligned}
    & \min\limits _{X \in \mathbb{R}^{d \times d}}\max\limits _{\theta \in K}\theta ^{\top } X \theta,\\ 
    & \text{subject to } \text{tr}(X)=d-k, \mathbf{0}\preceq X \preceq \mathbf{I}_{d}, X ^{\top } =X.
  \end{aligned}
\end{equation}
We further denote the solution to \eqref{eq: SDP problem} as $X ^{\star } _{k}$.
\end{definition}

Indeed, \eqref{eq: SDP problem} is a relaxed optimization problem compared with the original definition by the following fact.

\begin{lemma}\label{lemma: approximate Kolmogorov width}
  For any $0 \le k \le d$ and $\theta \in K$, we have $\theta \T X ^{\star }_{k}\theta \le \tilde{D}_{k}^{2}(K)\le D^2_{k}(K)$.
\end{lemma}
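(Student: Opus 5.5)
The statement has two inequalities, and I will handle them separately. The first is immediate from the definition of $X_k^\star$ as a minimizer of the SDP \eqref{eq: SDP problem}. Since $\tilde D_k^2(K)$ is the optimal value, we have $\tilde D_k^2(K)=\max_{\theta\in K}\theta\T X_k^\star\theta$. Hence for every $\theta\in K$, $\theta\T X_k^\star\theta\le \tilde D_k^2(K)$.

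If the minimum or maximum is not attained, I will read $\max$ as $\sup$ and $\min$ as $\inf$. The feasible set $\{X: X=X\T,\ \mathbf{0}\preceq X\preceq \mathbf{I}_d,\ \mathrm{tr}(X)=d-k\}$ is compact. The map $X\mapsto\sup_{\theta\in K}\theta\T X\theta$ is a supremum of linear functions, hence convex and lower semicontinuous. So a minimizer exists whenever the value is finite, e.g.\ when $K$ is bounded. If $K$ is unbounded, $D_k(K)$ may be infinite and the statement becomes trivial or must be read in the extended sense; I would note this briefly.

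For the second inequality, the plan is to show that every orthogonal projection $P\in\mathcal P_k$ yields a feasible point of the SDP whose objective equals the squared Kolmogorov objective. Given an orthogonal projection $P$ onto a $k$-dimensional subspace, set $X_P:=\mathbf{I}_d-P$. Then $X_P$ is symmetric and idempotent, so its eigenvalues lie in $\{0,1\}$. This gives $\mathbf{0}\preceq X_P\preceq \mathbf{I}_d$, and $\mathrm{tr}(X_P)=d-\mathrm{rank}(P)=d-k$, so $X_P$ is feasible.

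Moreover, $\theta\T X_P\theta=\theta\T(\mathbf{I}_d-P)\T(\mathbf{I}_d-P)\theta=\|\theta-P\theta\|_2^2$. Therefore
\[
\tilde D_k^2(K)\le \sup_{\theta\in K}\theta\T X_P\theta=\Big(\sup_{\theta\in K}\|\theta-P\theta\|_2\Big)^2 .
\]
Taking the infimum over $P\in\mathcal P_k$ and using that squaring is monotone on $[0,\infty)$ (so it commutes with $\sup$ and $\inf$) gives $\tilde D_k^2(K)\le D_k^2(K)$.

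There is no real obstacle here. The only points needing care are the identity $(\mathbf{I}-P)^2=\mathbf{I}-P$ for orthogonal projections, which uses that $\mathcal P_k$ consists of orthogonal projections as stipulated after Definition \ref{def_kolmogorov_width}, and the handling of attainment in the definition of $X_k^\star$.
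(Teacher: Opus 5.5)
Your proof is correct and follows the same route as the paper: the first inequality comes straight from the definition of $X_k^\star$ as the SDP minimizer, and the second from noting that $\mathbf{I}_d-P$ is feasible for \eqref{eq: SDP problem} with objective value $\sup_{\theta\in K}\|\theta-P\theta\|_2^2$. The only difference is that you take the infimum over all $P\in\mathcal{P}_k$, whereas the paper plugs in an optimal projection $P_k^\star$; your version is slightly cleaner because it does not assume that the infimum defining $D_k(K)$ is attained.
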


\begin{proof}[Proof of Lemma \ref{lemma: approximate Kolmogorov width}]
  For any fixed $k$, let $P_{d-k}:=\mathbf{I}_{d}-P _{k}^{\star }$, where $P _{k}^{\star }$ satisfies $\sup_{\theta \in K} \left\lVert (\mathbf{I}_{d}-P _{k})\theta \right\rVert_{2}^{2}= D^2_k(K)$. This is equivalent to $\theta ^{\top } P_{d-k}\theta \le D^2_k(K)$. Since $P_{d-k}$ is a feasible point of \eqref{eq: SDP problem}, we know $\tilde{D}_{k}^{2}(K)\le D _{k}^{2}(K)$. The other inequality follows directly from the definition of \eqref{eq: SDP problem}.
\end{proof}

We should be aware that $\tilde{D}_{k}(K)$ is possibly significantly smaller than $D _{k}(K)$. Hence, \eqref{eq: SDP problem} is a non-trivial relaxation.

\begin{example}[]
  Consider the case when $K=B _{d}(0,1)$ i.e., the unit ball in $\mathbb{R}^{d}$. It is not hard to see that 
  \begin{equation*}
    D _{k}(B _{d}(0,1))=\left\{
    \begin{aligned}
      1 &\ (0 \le k \le d-1),\\ 
      0 &\ (k=d).
    \end{aligned}
    \right.
  \end{equation*}
  However, we have $\tilde{D}_{k}(K)=\min\limits _{X}\sqrt{\lambda _{\text{max}}(X)}$ subject to $\text{tr}(X)=d-k, \mathbf{0}\preceq X \preceq \mathbf{I}_{d}, X=X ^{\top }$. Minimal calculation shows that $\tilde{D}_{k}(K)=\sqrt{1-\frac{k}{d}}$. The ratio between $D _{k}(K)$ and $\tilde{D}_{k}(K)$ is $\sqrt{d}\gtrsim 1$ when $k=d-1$.
\end{example}

The SDP \eqref{eq: SDP problem} formulated above exhibits a bilevel optimization structure with respect to $X$ and $\theta $. Focusing on the inner problem, for any fixed feasible matrix $X \in \mathbb{R}^{d \times d}$, we aim to solve the following classical constrained quadratic maximization:
\begin{equation}\label{eq: inner optimization problem}
\begin{aligned}
&\max\limits _{\theta }\theta  ^{\top } X \theta ,\
&\text{subject to }\theta \in K.
\end{aligned}
\end{equation}
While \eqref{eq: inner optimization problem} is conventionally formulated over an origin-symmetric convex set $K$, identifying its exact global optimum is analytically intractable. Even in the seemingly benign case where $X$ is positive semidefinite and $K$ is a polytope in $\mathbb{R}^{d}$, maximizing a convex function over a polytope remains notoriously NP-hard due to the potentially exponential growth of the number of vertices with respect to $d$. To circumvent this, \cite{10.1145/3406325.3451128} recently proposed a polynomial-time approximation framework based on a modified ellipsoid method (which is distinct from the algorithm discussed later in Section \ref{section: approximate subgradient and ellipsoid method}). This framework ensures that a $\kappa $-approximate optimal value for \eqref{eq: inner optimization problem} can be efficiently obtained. Crucially, \cite{10.1145/3406325.3451128} establish that satisfying the type-$2$ condition on the set $K$ is a strict prerequisite for the tractability of this approximation.
\begin{definition}[Minkowski gauge]\label{def_minkowski_gauge}
  Let $\mathcal{X}$ be a vector space $\mathcal{X}$ and $K \subset \mathcal{X}$ is any fixed subset. The Minkowski gauge of $K$ is defined as a function $\rho _{K}:\mathcal{X}\rightarrow [0,\infty ]$, where
  \begin{equation}
    \rho _{K}(\theta )=\inf\limits _{r}\left\{r \left\lvert\right. r \in \mathbb{R}^{+},\theta  \in rK,\theta  \in \mathcal{X}\right\}.
  \end{equation}
  If such $r$ does not exist for a given $\theta $, we exclusively define $\rho _{K}(\theta )=\infty $.
\end{definition}
\begin{definition}[Type-2 condition]\label{def_type_2_condition}
  Let $\left\{\epsilon _{i}\right\}_{i=1}^{m}$ be i.i.d. Rademacher random variables for some $m \in \mathbb{N}^{+}$. For $K \subset \mathcal{X}$ above, $K$ is type-$2$ with constant $T _{2}(K)$ if for any $m \in \mathbb{N}^{+}$ and any $\theta _{i}\in \mathbb{R}^{d}$, we have 
  \begin{equation}\label{eq: required condition in type 2 definition}
    \mathbb{E}_{\epsilon }\rho _{K}^{2}\left(\sum\limits_{i=1}^{m}\epsilon _{i}\theta _{i}\right)\le T _{2}^{2}(K)\sum\limits_{i=1}^{m}\rho _{K}^{2}(\theta _{i}).
  \end{equation}
  $T _{2}^{2}(K)$ is referred as the type-$2$ constant of $K$.
\end{definition}

\begin{example}[$\ell _{p}$ unit ball]\label{example: t_2(K) of unit ball in l_p norm}
  \cite{Milman1986AsymptoticTO} establish the following results about the $\ell _{p}$ unit ball (i.e., $\rho _{K}=\left\lVert \cdot \right\rVert_{p}^{}$) in $\mathbb{R}^{d}$.
  \begin{equation*}
    T _{2}(K)\asymp \left\{
    \begin{tabular}{ll}
      $d ^{\frac{1}{p}-\frac{1}{2}}$, & $(1 \le p \le 2)$,\\ 
      $\sqrt{\min\limits \left\{p,\ln d\right\}}$, & $(2 \le p \le \infty )$
    \end{tabular}
    \right.
  \end{equation*}
\end{example}

Crucially, \cite{10.1145/3406325.3451128} establish that the originally intractable problem \eqref{eq: inner optimization problem} can be rigorously reduced to the following equivalent semidefinite program:
\begin{equation}\label{eq: equivalent SDP problem}
  \begin{aligned}
    &\max\limits _{\mathbf{\Theta }}\left\langle X, \mathbf{\Theta } \right\rangle ,\\
    &\text{subject to }\mathbb{E}\left[\rho _{K}^{2}(\mathbf{\Theta }^{\frac{1}{2}}g)\right]\le 1, \mathbf{\Theta }\succeq \mathbf{0},
  \end{aligned}
\end{equation}
where $g \in \mathbb{R}^{d}$ denotes a standard Gaussian random vector.
\begin{proposition}[Observation 4.1 of \cite{10.1145/3406325.3451128}]\label{proposition: equivalence from the original SDP problem}
  The optimal value of the original SDP \eqref{eq: inner optimization problem} is equivalent to that of \eqref{eq: equivalent SDP problem}.
\end{proposition}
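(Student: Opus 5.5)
The plan is to prove the two inequalities between the optimal values separately. Throughout, $X$ is the fixed feasible matrix from \eqref{eq: SDP problem}, and $K$ is a compact set that is balanced (hence star-shaped about the origin and containing $\mathbf{0}$), which is the setting in which \eqref{eq: inner optimization problem} is considered. Write $\mathrm{OPT}:=\max_{\theta\in K}\theta\T X\theta$ for the value of \eqref{eq: inner optimization problem}. Since $\mathbf{0}\in K$, we have $\mathrm{OPT}\ge 0$. Two elementary properties of the Minkowski gauge from Definition \ref{def_minkowski_gauge} will be used. First, it is positively homogeneous: $\rho_K(t v)=|t|\rho_K(v)$, which uses that $K$ is balanced. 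Second, if $\rho_K(v)=r\in(0,\infty)$, then $v/r\in K$, which uses that $K$ is closed.

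\emph{Step 1 (\eqref{eq: equivalent SDP problem} $\ge$ \eqref{eq: inner optimization problem}).} Fix $\theta\in K$ with $\theta\neq\mathbf{0}$ (the case $\theta=\mathbf{0}$ is trivial since the SDP value is at least $0$, attained at $\mathbf{\Theta}=\mathbf{0}$). Set $\mathbf{\Theta}=\theta\theta\T\succeq\mathbf{0}$. Then $\mathbf{\Theta}^{1/2}=\theta\theta\T/\|\theta\|_2$, so $\mathbf{\Theta}^{1/2}g=(\theta\T g/\|\theta\|_2)\,\theta$, where $\theta\T g/\|\theta\|_2\sim\mathcal{N}(0,1)$. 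Homogeneity gives
\begin{equation*}
\mathbb{E}\rho_K^2(\mathbf{\Theta}^{1/2}g)=\rho_K^2(\theta)\,\mathbb{E}\left[\frac{(\theta\T g)^2}{\|\theta\|_2^2}\right]=\rho_K^2(\theta)\le 1 .
\end{equation*}
Hence $\mathbf{\Theta}$ is feasible, and its objective value is $\langle X,\theta\theta\T\rangle=\theta\T X\theta$. Taking the supremum over $\theta\in K$ yields the first inequality.

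\emph{Step 2 (\eqref{eq: equivalent SDP problem} $\le$ \eqref{eq: inner optimization problem}).} Fix a feasible $\mathbf{\Theta}$ and set $v=\mathbf{\Theta}^{1/2}g$. Then $\mathbb{E}[vv\T]=\mathbf{\Theta}$, so
\begin{equation*}
\langle X,\mathbf{\Theta}\rangle=\mathbb{E}\left[v\T Xv\right].
\end{equation*}
Next I bound $v\T Xv$ pointwise by $\rho_K^2(v)\,\mathrm{OPT}$. If $r=\rho_K(v)\in(0,\infty)$, then $v/r\in K$, so $v\T Xv=r^2 (v/r)\T X(v/r)\le r^2\,\mathrm{OPT}$. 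If $\rho_K(v)=0$, then $v\in rK$ for all $r>0$, and boundedness of $K$ forces $v=\mathbf{0}$, so the bound holds with equality. The case $\rho_K(v)=\infty$ occurs only on a null set, since feasibility gives $\mathbb{E}\rho_K^2(v)\le 1<\infty$. Taking expectations, and using $\mathrm{OPT}\ge 0$,
\begin{equation*}
\langle X,\mathbf{\Theta}\rangle\le \mathrm{OPT}\cdot\mathbb{E}\rho_K^2(v)\le\mathrm{OPT}.
\end{equation*}
Note that $\mathrm{OPT}\ge0$ is exactly what allows replacing $\mathbb{E}\rho_K^2(v)$ by $1$. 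This step needs no positive semidefiniteness of $X$, only quadratic homogeneity of $v\mapsto v\T Xv$.

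The main obstacle is not computational but concerns the regularity hypotheses on $K$. Positive homogeneity of $\rho_K$ requires $K$ to be star-shaped, and for the sign flip $t<0$ it requires $K$ to be balanced. Closedness is needed so that $v/\rho_K(v)\in K$, boundedness is needed to exclude nonzero $v$ with $\rho_K(v)=0$, and measurability of $\rho_K$ is needed for the expectations to make sense. I would state these assumptions explicitly, namely that $K$ is a compact balanced (convex) set, as in the standing setting of \cite{10.1145/3406325.3451128}. Under them, both steps go through, and the two optimal values coincide.
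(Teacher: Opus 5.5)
Your proof is correct and follows the paper's argument: the rank-one lift $\mathbf{\Theta}=\theta\theta\T$ gives the lower bound, and normalizing $\mathbf{\Theta}^{1/2}g$ by its gauge so that it lands in $K$ gives the upper bound. Your write-up is cleaner than the paper's display, which pulls $\rho_K^2(\mathbf{\Theta}^{1/2}g)$ outside $\mathbb{E}_g$; the right way is what you did, keeping the bound $v\T Xv\le\rho_K^2(v)\,\mathrm{OPT}$ pointwise and only then using $\mathbb{E}\rho_K^2(v)\le 1$ together with $\mathrm{OPT}\ge 0$.
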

\begin{proof}
  Let $f _{ori}(A)$ denote the optimal value of \eqref{eq: inner optimization problem}, $f _{\text{equ}}(A)$ denote the optimal value of \eqref{eq: equivalent SDP problem}, and $\theta _{1}\in K$ be any vector such that $\theta ^{\top } A \theta =f _{\text{raw}}(A)$. Consider $\mathbf{\Theta }=\theta _{1}\theta _{1}^{\top } $. Since $\mathbb{E}\rho _{K}^{2}(\mathbf{\Theta }^{\frac{1}{2}}g)=\mathbb{E}\rho _{K}^{2}\left(\frac{\theta _{1}X _{1}^{\top } g}{\left\lVert \theta _{1}\right\rVert_{2}^{}}\right)=\mathbb{E}\left(\theta _{1}^{\top } g\right)^{2}\cdot \rho _{K}^{2}\left(\frac{\theta _{1}}{\left\lVert \theta _{1}\right\rVert_{2}^{}}\right)=\left\lVert \theta _{1}\right\rVert_{2}^{2}\cdot \rho _{K}^{2}\left(\frac{\theta _{1}}{\left\lVert \theta _{1}\right\rVert_{2}^{2}}\right)=\rho _{K}^{2}(\theta)\le 1$, we know that $f _{\text{ori}}(A)\le f _{\text{equ}}(A)$.

  On the other hand, for any $\mathbf{X}$ that is feasible for \eqref{eq: equivalent SDP problem}, we have
  \begin{equation*}
    \left\langle A, \mathbf{\Theta } \right\rangle =\mathbb{E}_{g}\left\langle \mathbf{\Theta }^{\frac{1}{2}}g, A \mathbf{\Theta }^{\frac{1}{2}}g \right\rangle =\rho _{K}^{2}(\mathbf{\Theta }^{\frac{1}{2}}g)\cdot \mathbb{E}_{g}\left(\frac{g ^{\top } \mathbf{\Theta }^{\frac{1}{2}}}{\rho _{K}(\mathbf{\Theta }^{\frac{1}{2}}g)}A \frac{\mathbf{\Theta }^{\frac{1}{2}}g}{\rho _{K}(\mathbf{\Theta }^{\frac{1}{2}}g)}\right)\overset{\text{(i)}}{\le } \rho _{K}^{2}(\mathbf{\Theta }^{\frac{1}{2}}g)f _{\text{ori}}(A)\le f _{\text{ori}}(A),
  \end{equation*}
  where (i) is from the fact that $\frac{\mathbf{\Theta }^{1/2}g}{\rho _{K}(\mathbf{\Theta }^{1/2}g)}\in K$ by the definition of the \hyperref[def_minkowski_gauge]{Minkowski gauge}. Taking the supreme over $\mathbf{\Theta }$ on the LHS completes the proof.
\end{proof}

As a final preliminary, we introduce the notions of sign-invariant norms, exact 2-convexity, and balanced set .

\begin{definition}[Sign-invariant norm]\label{def_sign-invariant_norm}
  A norm $\left\lVert \cdot \right\rVert_{}^{}$ is \textit{sign-invariant} if for any $\theta \in \mathbb{R}^{d}$ and set of signs $\gamma =(\gamma _{1},\dots,\gamma _{d}), \gamma _{i}\in \left\{-1,1\right\}, i=1,\dots,d$, we have $\left\lVert X\right\rVert_{}^{}=\left\lVert \gamma \odot \theta \right\rVert_{}^{}$, where $\odot $ denotes the entrywise multiplication.
\end{definition}

\begin{definition}[Exact 2-convex norm]\label{def_exact_2_convex_norm}
  The Minkowski gauge $\rho _{K}(\cdot )$ of a set $K$ is 2-convex if for any $m \in \mathbb{N}^{+}$ and $\theta _{i},\dots,\theta _{m}\in \mathbb{R}^{d}$, we have 
  \begin{equation*}
    \rho _{K}\left(\left(\sum\limits_{i=1}^{m}\theta _{i}^{2}\right)^{\frac{1}{2}}\right)\le c(K) \left(\sum\limits_{i=1}^{m}\rho _{K}^{2}(\theta _{i})\right)^{\frac{1}{2}},
  \end{equation*}
  where $\theta ^{2}=\left(\theta _{1}^{2},\dots,\theta _{d}^{2}\right)^{\top } $ is the entrywise square of $\theta$ and $c(K)$ is a quantity that only depends on $K$. When $c(K)=1$, $\rho _{K}(\cdot )$ is said to be \textit{exact 2-convex}.
\end{definition}

\begin{definition}[Balanced set]
  For a subset $K \subset \mathcal{X}=\mathbb{R}^{d}$, $K$ is balanced if there exist $0<r \le R \le \infty $ such that $B _{d}(0,r)\subset K \subset B _{d}(0,R)$.
\end{definition}

The following theorem from \cite{10.1145/3406325.3451128} formally establishes the existence of an approximate efficient solution to \eqref{eq: inner optimization problem}.

\begin{theorem}[Proposition 4.2 and Theorem 7.6 of \cite{10.1145/3406325.3451128}]\label{theorem: main theorem of the cited paper}
  For the equivalent semidefinite program \eqref{eq: equivalent SDP problem}, if $K$ is type-$2$ and balanced, and the Minkowski gauge $\rho _{K}$ is a sign-invariant and exact 2-convex norm, then there is an algorithm $\mathcal{A}(X,R,r)$, on any input $X \in \mathbb{R}^{d \times d}$, $\mathcal{A}$ runs in polynomial time of $d,\left|\ln R\right|,\left|\ln r\right|,\text{bit}(X)$ and outputs a vector $\theta _{1}\in K$ such that $\theta _{1}^{\top } X\theta _{1}\gtrsim \frac{1}{\kappa }\max\limits _{\theta \in K}\theta ^{\top } X \theta$ with $\kappa \lesssim \ln d$.
\end{theorem}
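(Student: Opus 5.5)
The plan is to pass through the equivalent program \eqref{eq: equivalent SDP problem}, which by Proposition \ref{proposition: equivalence from the original SDP problem} has the same value as \eqref{eq: inner optimization problem}. The argument has three stages: solve a convex relaxation, round it with a Gaussian, and lose only an $\mathcal{O}(\ln d)$ factor in the rounding.

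\textbf{Step 1: make the relaxation tractable.} The constraint set $\{\mathbf{\Theta}\succeq \mathbf{0}:\ \mathbb{E}\rho_K^2(\mathbf{\Theta}^{1/2}g)\le 1\}$ is not obviously convex in $\mathbf{\Theta}$, since it involves $\mathbf{\Theta}^{1/2}$. This is where sign-invariance and exact 2-convexity enter. For a sign-invariant gauge, $\mathbb{E}\rho_K^2(\mathbf{\Theta}^{1/2}g)$ should be comparable, up to absolute constants, to a function of $\mathbf{\Theta}$ alone. For example, I would compare it with $\rho_K^2(\sqrt{\mathrm{diag}(\mathbf{\Theta})})$:
\begin{itemize}
\item the upper bound follows from exact 2-convexity, applied coordinatewise to $(\mathbf{\Theta}^{1/2}g)^2$ in expectation;
\item the lower bound follows from sign-invariance together with a Khintchine-type comparison.
\end{itemize}
The map $\mathbf{\Theta}\mapsto \rho_K^2(\sqrt{\mathrm{diag}(\mathbf{\Theta})})$ is convex, because $\rho_K^2(\sqrt{\cdot})$ is convex on the nonnegative orthant when $\rho_K$ is exactly 2-convex. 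So we obtain a convex program whose value is within constants of that of \eqref{eq: equivalent SDP problem}.

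\textbf{Step 2: solve the convex program.} We solve it to additive accuracy with the ellipsoid method, using the membership and separation oracle for $K$. Balancedness ($B_d(0,r)\subset K\subset B_d(0,R)$) supplies the inner and outer radii, which gives a running time polynomial in $d$, $|\ln R|$, $|\ln r|$ and $\mathrm{bit}(X)$.

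\textbf{Step 3: rounding (the main obstacle).} Given a near-optimal $\mathbf{\Theta}$, draw $g$ and set $v=\mathbf{\Theta}^{1/2}g$. We have $\mathbb{E}\,v^\top Xv=\langle X,\mathbf{\Theta}\rangle$, and $\mathbb{E}\rho_K^2(v)\lesssim 1$. The quadratic form may be heavy-tailed and may have a mixed sign structure, so I would output $\theta_1=v/\rho_K(v)$ and show that, with constant probability,
\[
\theta_1^\top X\theta_1\gtrsim \langle X,\mathbf{\Theta}\rangle/\kappa .
\]
The type-2 condition should control $\rho_K(v)$ from above, using concentration of the gauge under Gaussian and Rademacher sums. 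The loss $\kappa\lesssim\ln d$ would come from a union or decoupling bound over $\mathcal{O}(\ln d)$ dyadic scales of the eigenvalues of $X$, or equivalently from $T_2(K)$-type constants of order $\sqrt{\ln d}$ squared. Repeating the draw polynomially many times and keeping the best candidate turns the constant-probability guarantee into a high-probability one.

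I expect the genuinely delicate point to be a lower tail bound for $v^\top Xv$ when $X$ is indefinite. Where possible I would reduce to $X\succeq 0$, which is the case needed in \eqref{eq: SDP problem}; there a Paley--Zygmund argument suffices. Since this theorem is quoted from \cite{10.1145/3406325.3451128}, the remaining technical estimates would ultimately be taken from that source.
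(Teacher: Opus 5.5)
\textbf{The paper does not prove this statement.} It is quoted from the cited reference, so your sketch has to stand on its own. As written it has a genuine gap in Step 1, and that gap corrupts the accounting in Step 3.

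\textbf{Step 1 is false as stated.} You claim $\mathbb{E}\rho_K^2(\mathbf{\Theta}^{1/2}g)$ and $\rho_K^2(\sqrt{\mathrm{diag}\,\mathbf{\Theta}})$ agree up to absolute constants. Exact 2-convexity gives only one direction. It makes $F(y):=\rho_K^2(\sqrt{y})$ subadditive and $1$-homogeneous on $\mathbb{R}^d_+$, hence convex. Jensen, applied to the entrywise square of $v=\mathbf{\Theta}^{1/2}g$, then yields $\rho_K^2(\sqrt{\mathrm{diag}\,\mathbf{\Theta}})\le\mathbb{E}\rho_K^2(v)$, a \emph{lower} bound on the Gaussian functional.

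The upper bound fails with absolute constants. The cube $K=[-1,1]^d$ satisfies every hypothesis of the theorem. For $\mathbf{\Theta}=\mathbf{I}_d$ one has $\mathbb{E}\lVert g\rVert_\infty^2\asymp\ln d$, while $\rho_K^2(\sqrt{\mathrm{diag}\,\mathbf{\Theta}})=1$.

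\textbf{The correct upper bound} comes from sign-invariance (monotonicity of the norm), not from 2-convexity. Write $|v_i|=\sqrt{\mathbf{\Theta}_{ii}}\,|h_i|$ with $h_i\sim\mathcal{N}(0,1)$. Then $\rho_K(v)\le\max_i|h_i|\cdot\rho_K(\sqrt{\mathrm{diag}\,\mathbf{\Theta}})$, so $\mathbb{E}\rho_K^2(v)\lesssim\ln d\cdot\rho_K^2(\sqrt{\mathrm{diag}\,\mathbf{\Theta}})$. In the corrected argument, this inequality is the sole source of $\kappa\lesssim\ln d$.

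\textbf{Consequences for Step 3.} The bound $\mathbb{E}\rho_K^2(v)\lesssim 1$ is not available for a solution of the surrogate program; only $\lesssim\ln d$ is. The mechanisms you offer for the $\ln d$ loss (dyadic scales of the spectrum of $X$, or type-2 constants) do not produce it. Indeed, if Step 1 were true, your Paley--Zygmund argument would give a constant-factor algorithm for $X\succeq\mathbf{0}$.

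\textbf{Type 2 is not the right tool.} Expanding $v=\sum_j g_jc_j$ over the columns $c_j$ of $\mathbf{\Theta}^{1/2}$ gives $\mathbb{E}\rho_K^2(v)\le T_2^2(K)\sum_j\rho_K^2(c_j)$. The constraint does not control $\sum_j\rho_K^2(c_j)$; it equals $d$ in the cube example. (For sign-invariant, exactly 2-convex gauges, $T_2^2(K)\lesssim\ln d$ holds automatically, by the same monotonicity argument the paper uses for QCO sets.)

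\textbf{The PSD case closes once Step 1 is corrected.} The surrogate value is at least $\max_{\theta\in K}\theta^\top X\theta$, since $\theta\theta^\top$ is feasible. Paley--Zygmund, using $\mathbb{E}(v^\top Xv)^2\le 3\langle X,\mathbf{\Theta}\rangle^2$ for $X\succeq\mathbf{0}$, combined with Markov on $\rho_K^2(v)$, gives constant success probability with loss $\mathcal{O}(\ln d)$.

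\textbf{Indefinite $X$.} The statement covers arbitrary $X$, and you leave this case open. It needs no lower-tail estimate:
\begin{itemize}
\item symmetrize $X$ and diagonalize $\mathbf{\Theta}^{1/2}X\mathbf{\Theta}^{1/2}=UDU^\top$;
\item round with $v=\mathbf{\Theta}^{1/2}U\xi$ for a Rademacher vector $\xi$. Then $v^\top Xv=\xi^\top D\xi=\langle X,\mathbf{\Theta}\rangle$ for every $\xi$, so the objective is preserved deterministically;
\item each $v_i$ is a Rademacher sum with variance proxy $\mathbf{\Theta}_{ii}$, so $\mathbb{E}\max_i v_i^2/\mathbf{\Theta}_{ii}\lesssim\ln d$, and Markov alone shows that $\theta_1=v/\rho_K(v)$ works with probability at least $1/2$.
\end{itemize}
This handles PSD and indefinite $X$ uniformly. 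The paper's application only needs $X\succeq\mathbf{0}$.
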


\begin{remark*}[]
  Strictly speaking, the main conclusion of Theorem \ref{theorem: main theorem of the cited paper} holds only with high probability. However, since the algorithm terminates within a polynomial number of iterations --- as established by the convergence guarantees of the ellipsoid method in Theorem \ref{theorem: convergence guarantee of approximate ellipsoid method} later --- the accumulated probability of failure remains well-controlled and has a negligible effect on the overall procedure. For the sake of brevity, we omit the detailed argument here. Interested readers are referred to \cite{neykov2026polynomialtimenearoptimalestimationcertain} for a relevant discussion in an analogous setting.
\end{remark*}

For a comprehensive treatment of this framework—encompassing Theorem \ref{theorem: main theorem of the cited paper} and the underlying proof techniques—we refer the reader to \cite{10.1145/3406325.3451128}. Moving forward, we operate under the assumption that $K$ is type-$2$ and balanced, and its associated Minkowski gauge is sign-invariant and exactly 2-convex. As we establish in Lemma \ref{lemma: QCO sets are type-2 and exactly 2-convex}, the QCO sets inherently satisfy these structural prerequisites and thus emerge as a natural special case.

Back to our problem \eqref{eq: SDP problem}. For the inner problem, we leverage Theorem \ref{theorem: main theorem of the cited paper} and know that for any feasible $X$ in \eqref{eq: SDP problem}, there exists an oracle $\mathcal{O}(X)\in K$ guaranteed by Theorem \ref{theorem: main theorem of the cited paper} that approximates the optimal value of \eqref{eq: SDP problem} by a constant $\kappa $:
\begin{equation}\label{eq: property of the oracle}
  \mathcal{O}(X)^{\top } X \mathcal{O}(X)\ge \frac{1}{\kappa }\max\limits _{\theta \in K}\theta ^{\top } X \theta .
\end{equation}

Importantly, the approximation factor $\kappa $ remains moderately small, being bounded strictly by a polylogarithmic function of the dimension $d$, i.e., $\kappa =\text{polylog}(d)$.

Concluding this preparatory section, we introduce the formal definition of QCO sets and establish that they naturally fulfill the prerequisites of Theorem \ref{theorem: main theorem of the cited paper}.

\begin{definition}[Quadratically convex orthosymmetric (QCO) set]\label{def_QCO_set}
    Given a set $K \subset \mathbb{R}^{d}$, $K$ is a QCO set if it satisfies the following conditions:\\ 
\hspace*{0.5em}(1), $K$ is convex;\\ 
\hspace*{0.5em}(2), $K$ is quadratically convex, i.e., $K ^{2}$ is also convex, where $K ^{2}$ is defined as
\begin{equation*}
    K ^{2}:=\left\{\left(\theta _{1}^{2},\dots,\theta _{d}^{2}\right)^{\top } \left\lvert\right. \left(\theta _{1},\dots,\theta _{d}\right)^{\top } \in K\right\};
\end{equation*} 
\hspace*{0.5em}(3), $K$ is orthosymmetric, i.e., if $\theta =\left(\theta _{1},\dots,\theta _{d}\right)^{\top } \in K$, then for any $\eta =(\eta _{1},\dots,\eta _{d})^{\top } $, $\eta _{i}\in \left\{-1,1\right\}, i=1,\dots,d$, we have $\theta _{\eta }:=\eta \odot \theta \in K$.
\end{definition}

\begin{lemma}[\cite{neykov2026polynomialtimenearoptimalestimationcertain}]\label{lemma: QCO sets are type-2 and exactly 2-convex}
  For any QCO set $K$, $K$ is type-$2$ and exactly $2$-convex. Furthermore, the type-$2$ constant $T _{2}(K)=c \ln d$ for some universal constant $c$.
\end{lemma}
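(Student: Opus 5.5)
The plan is to rewrite the gauge $\rho_K$ as a gauge of the convex set $K^{2}$, which settles exact 2-convexity at once. Type-2 then follows from exact 2-convexity, solidity of $K$, and a coordinatewise Hoeffding bound with a union bound over the $d$ coordinates. Throughout, $\theta^{2}$ and $(\cdot)^{1/2}$ are entrywise, as in Definition \ref{def_exact_2_convex_norm}.

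\textbf{Step 1 (solidity).} I would first show that a QCO set $K$ is solid: if $\theta\in K$ and $|\eta_j|\le|\theta_j|$ for every $j$, then $\eta\in K$. The point $\eta$ lies in the box $\prod_j[-|\theta_j|,|\theta_j|]$, whose vertices are the sign flips $\gamma\odot\theta$. These all lie in $K$ by orthosymmetry, so $\eta$ is in their convex hull, which is contained in $K$ by convexity. Two consequences follow.
\begin{itemize}
\item[(a)] $\rho_K(\theta)$ depends only on $|\theta|$ and is monotone in $|\theta|$ coordinatewise.
\item[(b)] $K^{2}$ is a convex subset of $\mathbb{R}^{d}_{+}$ that is downward closed in $\mathbb{R}^d_+$.
\end{itemize}

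\textbf{Step 2 (exact 2-convexity).} For $v\in\mathbb{R}^d_+$ I would check that $\rho_{K^{2}}(v)=\rho_K(v^{1/2})^{2}$. This holds because $v^{1/2}\in rK$ if and only if $v\in r^{2}K^{2}$, using solidity to move between $|\theta|$ and $\theta$. Since $K^{2}$ is convex and contains a neighbourhood of $0$ in $\mathbb{R}^d_+$ (by balancedness), $\rho_{K^{2}}$ is sublinear on $\mathbb{R}^d_+$. Hence
\begin{equation*}
\rho_K\Big(\big(\textstyle\sum_i\theta_i^{2}\big)^{1/2}\Big)^{2}=\rho_{K^{2}}\Big(\textstyle\sum_i\theta_i^{2}\Big)\le\sum_i\rho_{K^{2}}(\theta_i^{2})=\sum_i\rho_K(\theta_i)^{2},
\end{equation*}
which is exact 2-convexity with $c(K)=1$.

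\textbf{Step 3 (type-2).} Fix $\theta_1,\dots,\theta_m$ and set $s:=(\sum_i\theta_i^{2})^{1/2}\in\mathbb{R}^d_+$.
\begin{itemize}
\item[(i)] For each coordinate $j$, Hoeffding's inequality gives $\mathbb{P}\big(|\sum_i\epsilon_i\theta_{ij}|>t\,s_j\big)\le 2e^{-t^{2}/2}$. A union bound over the $d$ coordinates gives $|\sum_i\epsilon_i\theta_i|\le t\,s$ entrywise, with probability at least $1-2de^{-t^{2}/2}$.
\item[(ii)] On that event, monotonicity (a) together with Step 2 yields $\rho_K(\sum_i\epsilon_i\theta_i)^{2}\le t^{2}\rho_K(s)^{2}\le t^{2}\sum_i\rho_K^{2}(\theta_i)$.
\item[(iii)] Integrating the tail, $\mathbb{E}\,Z^{2}=\int_0^\infty\mathbb{P}(Z^{2}>u)\,du$ with $Z:=\rho_K(\sum_i\epsilon_i\theta_i)/\rho_K(s)$. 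I would bound the probability by $1$ for $u\le 2\ln(2d)$ and by $2de^{-u/2}$ afterwards. This gives $\mathbb{E}Z^{2}\le 2\ln(2d)+2\lesssim\ln d$.
\end{itemize}
Therefore $T_2^{2}(K)\lesssim\ln d$. This yields $T_2(K)\lesssim\sqrt{\ln d}$, and in particular $T_2(K)\le c\ln d$ for $d\ge 2$, which is the stated bound.

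\textbf{Main obstacle.} I expect the main obstacle to be Step 2 rather than Step 3. One has to check carefully that $\rho_{K^{2}}$ is a genuine sublinear gauge on the orthant and that it coincides with $\rho_K(\cdot^{1/2})^{2}$. This needs solidity (Step 1) so that $K^{2}$ is downward closed, and balancedness so that the gauges are finite. Closedness issues with the infimum in the gauge can be handled by approximating with $r+\delta$.
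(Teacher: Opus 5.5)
Your proof is correct and follows essentially the same route as the paper. You write $\rho_K^2(\theta)$ as the gauge of the convex set $K^2$ at $\theta^2$ and get exact 2-convexity from subadditivity of that gauge. For type-2 you dominate the random sum entrywise by $s=(\sum_i\theta_i^2)^{1/2}$ times a maximum of $d$ normalized sub-Gaussian coordinates, which with monotonicity gives $T_2^2(K)\lesssim\ln d$, i.e.\ $T_2(K)\lesssim\sqrt{\ln d}$, just as the paper's proof concludes. There are two small differences. First, you apply Hoeffding and a union bound directly to the Rademacher coordinates instead of first passing to Gaussian sums through the Ledoux--Talagrand comparison, which is slightly more elementary. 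Second, your Step 1 derives coordinatewise monotonicity of the gauge from convexity plus orthosymmetry, whereas the paper attributes it to star-shapedness alone.
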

The proof of Lemma \ref{lemma: QCO sets are type-2 and exactly 2-convex} is essentially from \cite{neykov2026polynomialtimenearoptimalestimationcertain}. We place the proof of Lemma \ref{lemma: QCO sets are type-2 and exactly 2-convex} in Appendix \ref{subsection: proof of the property that QCO sets are type-2 and exactly 2-convex} for the reader's convenience.

\section{Approximate Subgradient and Ellipsoid Method}\label{section: approximate subgradient and ellipsoid method}

\subsection{Approximate Subgradient and Related Property}

In the context of the SDP \eqref{eq: SDP problem}, we define the optimal value function $h _{\text{S}}(X):=\max\limits _{\theta \in K}\theta ^{\top } X \theta $ for any feasible $X$, and denote its corresponding maximizer by $\theta ^{\star }_{X}$ (provided it exists). It is straightforward to verify that the outer product $\theta ^{\star }_{X}\theta ^{\star \top }_{X}$ serves as a valid subgradient of $h _{S}$ at $X$. This directly follows from the fact that for any feasible $X _{1}, X _{2}$, the following inequality holds: $h _{S}(X _{2})=\theta _{X _{2}}^{\star \top } X _{2}\theta _{X _{2}}^{\star }\ge \theta _{X _{1}}^{\star \top }X _{2}\theta _{X _{1}}^{\star }=h _{S}(X _{1})+\left\langle \theta _{X _{1}}^{\star }\theta _{X _{1}}^{\star \top }, X _{2}-X _{1}\right\rangle$. Consequently, since the oracle $\mathcal{O}(X)$ only provides an approximation of the optimal value, we anticipate it to return an approximate subgradient, thereby satisfying a relaxed subgradient condition.

\begin{lemma}[]\label{lemma: approximate subgradient}
  For any feasible $X$ and the corresponding $\mathcal{O}(X)$, $\mathcal{O}(X)\mathcal{O}(X)^{\top } $ is an $\omega $-subgradient of $h _{S}$ at $X$ in the sense that for any feasible $X ^{\prime }$, we have 
  \begin{equation}\label{eq: property of approximate subgradient}
    h _{S}(X ^{\prime })\ge \frac{1}{\kappa }h _{S}(X)+\mathcal{O}(X)^{\top } (X ^{\prime }-X)\mathcal{O}(X).
  \end{equation}
\end{lemma}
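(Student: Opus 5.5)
Write $\theta_1:=\mathcal{O}(X)$. By Theorem \ref{theorem: main theorem of the cited paper}, $\theta_1\in K$, so $\theta_1$ is a feasible point of the inner maximization \eqref{eq: inner optimization problem} at any feasible $X'$. The plan is to start from this feasibility, rewrite $\theta_1^\top X'\theta_1$ by expanding $X' = X + (X'-X)$, and then use the approximation property \eqref{eq: property of the oracle} to replace the exact value at $X$ by $\frac{1}{\kappa}h_S(X)$.

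\textbf{Step 1 (feasibility).} Since $\theta_1\in K$, the definition of $h_S$ as a maximum over $K$ gives
\begin{equation*}
h_S(X')=\max_{\theta\in K}\theta^\top X'\theta\ \ge\ \theta_1^\top X'\theta_1 .
\end{equation*}

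\textbf{Step 2 (linear split).} Writing $X'=X+(X'-X)$ and using bilinearity,
\begin{equation*}
\theta_1^\top X'\theta_1=\theta_1^\top X\theta_1+\theta_1^\top (X'-X)\theta_1 .
\end{equation*}

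\textbf{Step 3 (approximation factor).} By \eqref{eq: property of the oracle}, $\theta_1^\top X\theta_1\ge \frac{1}{\kappa}\max_{\theta\in K}\theta^\top X\theta=\frac{1}{\kappa}h_S(X)$. Chaining Steps 1--3 yields \eqref{eq: property of approximate subgradient}.

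The argument is essentially routine. The one point needing care is Step 3. Theorem \ref{theorem: main theorem of the cited paper} states the guarantee only up to an absolute constant ($\gtrsim$), so I would absorb that constant into $\kappa$, which is exactly how \eqref{eq: property of the oracle} is stated. The guarantee also holds only with high probability, and I would handle this as in the remark following that theorem, conditioning on the success event. Finally, $h_S(X)\ge 0$ because $X\succeq \mathbf{0}$, so dividing by $\kappa\ge 1$ does not reverse any inequality; the chain above does not actually need this fact, but it confirms the bound is not vacuous.
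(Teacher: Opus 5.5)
Your proof is correct and is essentially the paper's argument. The paper writes $h_S(X)+\mathcal{O}(X)^\top(X'-X)\mathcal{O}(X)=h_S(X)-\mathcal{O}(X)^\top X\mathcal{O}(X)+\mathcal{O}(X)^\top X'\mathcal{O}(X)\le (1-\frac{1}{\kappa})h_S(X)+h_S(X')$ and then cancels $h_S(X)$; this uses exactly your two ingredients, $\mathcal{O}(X)\in K$ and the oracle bound \eqref{eq: property of the oracle}, arranged as add-and-subtract rather than as your direct chain.
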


\begin{proof}
  Lemma \ref{lemma: approximate subgradient} can be directly verified via \eqref{eq: property of the oracle}. We have 
  \begin{equation*}
    \begin{aligned}
      h _{S}(X)+\mathcal{O}(X)^{\top } (X ^{\prime }-X)\mathcal{O}(X)&=h _{S}(X)-\mathcal{O}(X)^{\top } X \mathcal{O}(X)+\mathcal{O}(X)^{\top } X ^{\prime }\mathcal{O}(X)\\ 
      &\le \left(1-\frac{1}{\kappa }\right)h _{S}(X)+h _{S}(X ^{\prime }).
    \end{aligned}
  \end{equation*}
  Cancelling $h _{S}(X)$ on the both sides yields the results.
\end{proof}
Lemma \ref{lemma: approximate subgradient} tells us that while $\mathcal{O}(X)\mathcal{O}(X)^{\top } $ is no longer a rigorous subgradient, it preserves the property somehow by adding a $\frac{1}{\kappa }$ factor, which is the key to the following ellipsoid method.

\subsection{Ellipsoid Method}

\subsubsection{Problem without Equality Constraints}

The ellipsoid method (\cite{Shor1977,Yudin1976}) stands as a fundamental iterative tool for constrained convex optimization. Diverging from standard gradient-descent techniques that take local steps towards the optimal solution, the ellipsoid method systematically constructs a sequence of bounding ellipsoids strictly guaranteed to enclose the true minimizer. The algorithm derives its convergence from the continuous, geometric reduction in the volume of the ellipsoids. In this theoretical light, the approach can be understood as a generalization of the one-dimensional bisection method to the Euclidean space $\mathbb{R}^{d}$. Crucially, the exponential decay rate of these ellipsoidal volumes is precisely what guarantees polynomial-time computational complexity across a broad spectrum of optimization paradigms.

Consider the unconstrained convex optimization problem:
\begin{equation}\label{eq: simple optimization problem}
\begin{aligned}
& \text{minimize } f(x), x \in \mathbb{R}^{d},
\end{aligned}
\end{equation}
where $f: \mathbb{R}^{d} \to \mathbb{R}$ ($d \ge 2$) is convex. We do not require $f$ to be differentiable, assuming only the availability of a subgradient oracle. Let the algorithm be initialized with an ellipsoid $E ^{(0)}=\left\{x \left\lvert\right. (x-x ^{(0)})^{\top } \left(P ^{(0)}\right)^{-1}(x-x ^{(0)})\le 1\right\}$ centered at $x ^{(0)} \in \mathbb{R}^{d}$, where $P ^{(0)}\in \mathbb{R}^{d \times d}$ is a positive definite matrix. At the $k$-th iteration, let $x ^{(k)}$ and $P ^{(k)}$ define the current search region, and compute a subgradient $g ^{(k)} \in \partial f(x ^{(k)})$. If $\mathbf{0} \in \partial f(x ^{(k)})$, the exact global minimizer is successfully identified at $x ^{(k)}$. Otherwise, for any non-zero $g ^{(k)}$, the subgradient inequality implies that any point $x$ satisfying $\left\langle x-x ^{(k)}, g ^{(k)} \right\rangle > 0$ strictly degrades the objective value relative to $f(x ^{(k)})$, fundamentally excluding such points from optimality. Consequently, our objective is to construct an updated ellipsoid $E ^{(k+1)}$, parameterized by $x ^{(k+1)}$ and $P ^{(k+1)}$, that tightly circumscribes the remaining feasible half-ellipsoid:
\begin{equation*}
\left\{x \left\lvert\right. x \in E ^{(k)}, \left\langle x-x ^{(k)}, g ^{(k)} \right\rangle \le 0\right\}\subset E ^{(k+1)}:=\left\{x \left\lvert\right. (x-x ^{(k+1)})^{\top } \left[P ^{(k+1)}\right]^{-1} (x-x ^{(k+1)})\le 1\right\}.
\end{equation*}
We select the ellipsoid with smallest volume while satisfying the condition above. To determine such ellipsoid, we can first assume $E ^{(k)}=\mathbf{I}_{d}$, and transform back with the affine transformation $\left(E ^{(k)}\right)^{\frac{1}{2}}$. It is proved (see the original work by \cite{yudin1976informational}) that the ellipsoid in the $(k+1)$-th iteration is determined by:
\begin{equation}\label{eq: update of the ellipsoid method without constraint}
  \begin{aligned}
    & x ^{(k+1)}=x-\frac{P ^{(k)}g ^{(k)}}{(d+1)\sqrt{g ^{(k)\top }P ^{(k)} g ^{(k)}}},\\ 
    & P ^{(k+1)}=\frac{d ^{2}}{d ^{2}-1}\left(P ^{(k)}-\frac{2}{d+1}\frac{P ^{(k)} g ^{(k)}g ^{(k)\top } P ^{(k)}}{g ^{(k)\top }P ^{(k)}g ^{(k)}}\right).
  \end{aligned}
\end{equation}
The updates above can be generalized to convex optimization programs with inequality constraints. Consider the following constrained problem:
\begin{equation}\label{eq: optimization problem with inequality constraints}
\begin{aligned}
&\text{minimize } f _{0}(x),\
&\text{subject to } f _{i}(x)\le 0, i=1,\dots,m,
\end{aligned}
\end{equation}
where the functions $f _{i}$ for $i=0,1,\dots,m$ are convex and equipped with subgradient oracles. Guided by the same geometric intuition, the update procedure at any iteration hinges strictly on the feasibility of the current iterate $x ^{(k)}$. In particular: (1) if $x ^{(k)}$ violates any constraint, we identify a violated index (e.g., the smallest $i$ satisfying $f _{i}(x ^{(k)})>0$) and execute the update using a \textit{feasibility cut} derived from the subgradient of $f _{i}$; (2) if $x ^{(k)}$ resides within the feasible region, we proceed with an \textit{objective cut} based on the subgradient of $f _{0}$, mirroring the unconstrained scenario. Thus, from the perspective of the ellipsoid method, constrained optimization is mechanistically equivalent to its unconstrained counterpart; in both paradigms, the algorithm consistently curtails the search space by constructing a ellipsoid with smaller volume.

\subsubsection{Convergence Analysis}
As noted earlier, the theoretical convergence of the ellipsoid algorithm relies on the monotonic decrease in the volume of the constructed ellipsoids, namely, $\text{Vol}(E ^{(k+1)})<\text{Vol}(E ^{(k)})$. In fact, we have the following famous results

\begin{theorem}[]\label{theorem: convergence guarantee of vanilla ellipsoid method}
  For the convex optimization problem \eqref{eq: simple optimization problem} and \eqref{eq: optimization problem with inequality constraints}, we have
  \begin{equation*}
    \frac{\text{Vol}(E ^{(k+1)})}{\text{Vol}(E ^{(k)})}=\frac{d ^{d}}{(d+1)^{\frac{d+1}{2}}\cdot (d-1)^{\frac{d-1}{2}}}<e ^{-\frac{1}{2d}}.
  \end{equation*}
  Furthermore, for the feasible $\epsilon $-suboptimal set
  \begin{equation*}
    \left\{x \left\lvert\right. x \in \mathbb{R}^{d},f _{0}(x)-f _{0}(x ^{\star })\le \epsilon ,f _{i}(x)\le 0,i=1,\dots,m\right\}
  \end{equation*}
  where $x ^{\star }$ is the true minimizer, if it has positive volume, then we have
  \begin{equation*}
    \left|f(x ^{\dagger })-f(x ^{\star })\right|\le c \cdot \exp\left\{-\frac{M}{2d ^{2}}\right\},
  \end{equation*}
  where $x ^{\dagger }_{M}:=\mathop{\arg \min}\limits _{x ^{(i)},i=1,2\dots,M}f(x ^{(i)})$ is the best candidate up to the $M$-th iteration, and $c$ is a constant that only depends on $f _{0}$.
\end{theorem}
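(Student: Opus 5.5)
We prove the two claims in order: first the exact one-step volume ratio, then the suboptimality bound after $M$ iterations.

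\textbf{Volume ratio.} We use affine invariance. Ratios of volumes are preserved under the affine map $x\mapsto (P^{(k)})^{1/2}x + x^{(k)}$, and this map carries the update \eqref{eq: update of the ellipsoid method without constraint} to the update for the unit ball. After a further rotation we may therefore take $E^{(k)}$ to be the unit ball $B_d(0,1)$ and $g^{(k)}=e_1$. The update then gives center $-\frac{1}{d+1}e_1$ and shape matrix
\[
\frac{d^2}{d^2-1}\Bigl(\mathbf{I}_d-\frac{2}{d+1}e_1e_1^\top\Bigr).
\]
Its eigenvalues are $\frac{d^2}{(d+1)^2}$, with multiplicity one, and $\frac{d^2}{d^2-1}$, with multiplicity $d-1$.

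The volume of an ellipsoid is proportional to $\sqrt{\det P}$. Hence
\[
\frac{\text{Vol}(E^{(k+1)})}{\text{Vol}(E^{(k)})}=\frac{d}{d+1}\Bigl(\frac{d^2}{d^2-1}\Bigr)^{\frac{d-1}{2}},
\]
which rearranges to the stated closed form. We should also check that the half-ball $\{x\in B_d(0,1): x_1\le 0\}$ lies inside the new ellipsoid. This amounts to a one-variable inequality in $x_1\in[-1,0]$ after maximizing over the other coordinates, which is routine.

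For the exponential bound we write the ratio as $\bigl(1-\frac{1}{d+1}\bigr)\bigl(1+\frac{1}{d^2-1}\bigr)^{(d-1)/2}$. Applying $1+t\le e^t$ to each factor gives at most
\[
\exp\Bigl\{-\frac{1}{d+1}+\frac{1}{2(d+1)}\Bigr\}=e^{-\frac{1}{2(d+1)}}.
\]
To obtain the sharper $e^{-1/(2d)}$ we bound $\ln(1-\frac{1}{d+1})\le -\frac{1}{d+1}-\frac{1}{2(d+1)^2}$ instead. Comparing with $-\frac{1}{2d}$ is then an elementary inequality for $d\ge 2$.

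\textbf{Convergence.} Iterating the ratio gives $\text{Vol}(E^{(M)})\le e^{-M/(2d)}\text{Vol}(E^{(0)})$. We use the standard argument with the shrunken set. Let $\epsilon>0$ and suppose every feasible iterate so far has $f_0(x^{(i)})-f_0(x^\star)>\epsilon$.

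1. Each objective cut removes only points $x$ with $f_0(x)\ge f_0(x^{(i)})$, by the subgradient inequality. Such points have excess value greater than $\epsilon$, so none of them lies in the $\epsilon$-suboptimal set.
2. Each feasibility cut removes only points with $f_i(x)>0$, which are infeasible.
3. Hence the feasible $\epsilon$-suboptimal set $S_\epsilon$ stays inside every $E^{(i)}$, provided $S_\epsilon\subset E^{(0)}$. This forces $\text{Vol}(S_\epsilon)\le e^{-M/(2d)}\text{Vol}(E^{(0)})$.

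To turn this into a rate we lower-bound $\text{Vol}(S_\epsilon)$. Since $S_\epsilon$ has positive volume, it contains a ball $B(x_0,r)$ around a feasible point, with $r>0$. Convexity gives
\[
x^\star+\tfrac{\epsilon}{\epsilon_0}\bigl(B(x_0,r)-x^\star\bigr)\subset S_\epsilon \quad\text{for } \epsilon\le \epsilon_0 .
\]
Therefore $\text{Vol}(S_\epsilon)\ge (\epsilon/\epsilon_0)^d\,\text{Vol}(B(x_0,r))$. Combining the upper and lower bounds, a contradiction occurs as soon as
\[
\epsilon<\epsilon_0\Bigl(\frac{\text{Vol}(E^{(0)})}{\text{Vol}(B(x_0,r))}\Bigr)^{1/d}e^{-M/(2d^2)}.
\]
Consequently the best feasible candidate satisfies $f_0(x^\dagger_M)-f_0(x^\star)\le c\,e^{-M/(2d^2)}$, with $c$ depending only on $f_0$, the constraints and the initialization.

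\textbf{Main obstacle.} The delicate step is this last lower bound. It requires $S_\epsilon\subset E^{(0)}$ and a positive-volume feasible region, which the statement assumes. It also requires that $x^\dagger_M$ is taken only over feasible iterates; we will make that interpretation explicit.
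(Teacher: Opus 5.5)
Your overall route matches the standard argument the paper defers to: affine reduction to the unit ball with $g^{(k)}=e_1$, the eigenvalue computation, then the shrinking $\epsilon$-suboptimal set. The volume formula and the half-ball containment are fine. The genuine gap is the sharpening from $e^{-1/(2(d+1))}$ to $e^{-1/(2d)}$. Your two estimates give
\[
\ln\frac{\text{Vol}(E^{(k+1)})}{\text{Vol}(E^{(k)})}\le -\frac{1}{2(d+1)}-\frac{1}{2(d+1)^2}=-\frac{d+2}{2(d+1)^2}=-\frac{1}{2d}+\frac{1}{2d(d+1)^2}.
\]
Here $-\frac{d+2}{2(d+1)^2}\le-\frac{1}{2d}$ is equivalent to $d(d+2)\ge(d+1)^2$, which is false for every $d$. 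So the ``elementary inequality'' you defer to does not hold.

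The reason is that the true margin is tiny. The exact log-ratio is $-\frac{1}{2d}-\frac{1}{12d^3}-\cdots$. Cutting $\ln(1-\frac{1}{d+1})$ off after the quadratic term discards about $\frac{1}{3d^3}$, and $\ln(1+t)\le t$ discards about $\frac{1}{4d^3}$. A clean fix is the exact identity
\[
\ln\frac{d}{d+1}+\frac{d-1}{2}\ln\frac{d^2}{d^2-1}=-\frac{d+1}{2}\ln\Bigl(1+\frac1d\Bigr)-\frac{d-1}{2}\ln\Bigl(1-\frac1d\Bigr)=-\frac{d}{2}\sum_{m\ge 1}\frac{d^{-2m}}{m(2m-1)},
\]
which follows from $(1+u)\ln(1+u)+(1-u)\ln(1-u)=\sum_{m\ge1}\frac{u^{2m}}{m(2m-1)}$ for $|u|<1$. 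The $m=1$ term is exactly $-\frac{1}{2d}$ and every remaining term is strictly negative. This matters downstream: with only $e^{-1/(2(d+1))}$, your convergence argument yields $e^{-M/(2d(d+1))}$, not the stated $e^{-M/(2d^2)}$.

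A smaller slip is that the contradiction inequality is reversed. Suppose every feasible iterate has excess $>\epsilon$, with $\epsilon\le\epsilon_0$. Your two volume bounds then force
\[
\epsilon\le\epsilon_0\Bigl(\frac{\text{Vol}(E^{(0)})}{\text{Vol}(B(x_0,r))}\Bigr)^{1/d}e^{-M/(2d^2)}=:\tau_M .
\]
So the contradiction arises for $\epsilon>\tau_M$, not $\epsilon<\tau_M$. As you wrote it, letting $\epsilon\to0$ would force an exactly optimal iterate. Your stated conclusion does follow from the corrected direction by letting $\epsilon\downarrow\tau_M$, but only when $\tau_M<\epsilon_0$. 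For smaller $M$ there may be no feasible iterate at all. The bound should therefore be stated for $M$ large enough, or $c$ enlarged with an explicit convention for $f_0(x^\dagger_M)$.
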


From Theorem \ref{theorem: convergence guarantee of vanilla ellipsoid method}, we know that the ellipsoid method converges exponentially under very mild conditions. Equivalently, for any given precision $\epsilon $, the ellipsoid method finds at least one feasible point that is $\epsilon $-suboptimal in at most $c ^{\prime }d ^{2}\ln \left(\frac{1}{\epsilon }\right)$ iterations for some constant $c ^{\prime }$. The proof of Theorem \ref{theorem: convergence guarantee of vanilla ellipsoid method} derives from the updates \ref{eq: update of the ellipsoid method without constraint} and the computation of their volumes, which can be found in many references (\cite{Shor1970ConvergenceRO, zbMATH03516928, KHACHIYAN198053}). Note that when computing the volume ratio between $E ^{(k+1)}$ and $E ^{(k)}$, we can again assume $x ^{(k)}=\mathbf{0}, P ^{(k)}=\mathbf{I}_{d}$ since the volume ratio is invariant under the affine transformations.

The pseudo code of the ellipsoid method for the problems \eqref{eq: simple optimization problem} and \eqref{eq: optimization problem with inequality constraints} is provided as follows.
\begin{algorithm}[htbp]
    \caption{ellipsoid method with (inequality) constraints}\label{algorithm: ellipsoid method with inequality constraints}

    \KwInput{$d, f _{0},...,f _{m},x ^{(0)},P ^{(0)},\epsilon ,c ^{\prime }$}

    $k=0, x _{\text{best}}=\mathbf{0}, v _{\text{best}}=\infty $

    $M=c ^{\prime }d _{A}^{2}\ln \left(\frac{1}{\epsilon }\right)$

    \While{$k<M$}{
      \If{$x ^{(k)}$ is feasible}{

        \If{$f(x ^{(k)})<v _{\text{best}}$}{
          $x _{\text{best}}=x ^{(k)}$

          $v _{\text{best}}=f(x ^{(k)})$
        }

        Select any $g ^{(k)}\in \partial f _{0}(x ^{(k)})$
      }
      \Else{
        Find the smallest $i$ such that $f _{i}(x ^{(k)})> 0$

        Select any $g ^{(k)}\in \partial f _{i}(x ^{(k)})$
      }
      
      Update $x ^{(k+1)}=x ^{(k)}-\frac{P ^{(k)}g ^{(k)}}{(d+1)\sqrt{g ^{(k)\top }P ^{(k)} g ^{(k)}}}$

      Update $P ^{(k+1)}=\frac{d ^{2}}{d ^{2}-1}\left(P ^{(k)}-\frac{2}{d+1}\frac{P ^{(k)} g ^{(k)}g ^{(k)\top } P ^{(k)}}{g ^{(k)\top }P ^{(k)}g ^{(k)}}\right)$

      $k=k+1$
    }

    \KwOutput{$x _{\text{best}},v _{\text{best}}$}
\end{algorithm}

\subsubsection{Problem with Equality Constraints}

The approximate Kolmogorov width SDP \eqref{eq: SDP problem} can be equivalently formulated as a convex program over symmetric matrices, subject to the trace constraint $\text{tr}(X)=d-k$ and the positive semidefinite bounds $\mathbf{0}\preceq X \preceq \mathbf{I}_{d}$:
\begin{equation}\label{eq: outer optimization problem}
\begin{aligned}
&\min\limits _{X}h _{S}(X),\\
& \text{subject to } \text{tr}(X)=d-k, \mathbf{0}\preceq X \preceq \mathbf{I}_{d}, X ^{\top } =X.
\end{aligned}
\end{equation}
To execute the ellipsoid method in the presence of affine equality constraints, we adopt the framework established by \cite{SHAH200185}. Consider a general convex optimization problem with both inequality and equality constraints:
\begin{equation}\label{eq: optimization problem with equality constraints}
\begin{aligned}
&\text{minimize } f(x),\\
&\text{subject to } f _{i}(x)<0, Ax=b,
\end{aligned}
\end{equation}
where $A \in \mathbb{R}^{\text{rank}\left(A\right)\times d}$ is a matrix with full row rank. Assuming the initial iterate $x ^{(0)}$ lies within the affine subspace defined by $Ax=b$, \cite{SHAH200185} derive the following projected update rules for the center $x ^{(k)}$ and the shape matrix $P ^{(k)}$:
\begin{equation}\label{eq: update of the ellipsoid method with linear constraints}
  \begin{aligned}
  & r ^{(k)}:=\frac{\left(P ^{(k)}-P ^{(k)}A ^{\top } \left(AP ^{(k)}A ^{\top } \right)^{-1}AP ^{(k)}\right)g ^{(k)}}{\sqrt{g ^{(k)\top } \left(P ^{(k)}-P ^{(k)}A ^{\top } \left(AP ^{(k)}A ^{\top } \right)^{-1}AP ^{(k)}\right)g ^{(k)}}},\\
  & x ^{(k+1)}=x ^{(k)}-\frac{1}{d+1}r ^{(k)},\\
  & P ^{(k+1)}=\frac{d ^{2}}{d ^{2}-1}\left(P ^{(k)}-\frac{2}{d+1}r ^{(k)}r ^{(k)\top }\right).
  \end{aligned}
\end{equation}
Following standard cutting-plane logic, $g ^{(k)}\in \partial f _{0}(x ^{(k)})$ is an objective cut if $x ^{(k)}$ is fully feasible. If $x ^{(k)}$ violates any constraint, $g ^{(k)}\in \partial f _{i}(x ^{(k)})$ functions as a feasibility cut, where $i$ identifies the first constraint satisfying $f _{i}(x ^{(k)})\ge 0$.

While the update equations in \eqref{eq: update of the ellipsoid method with linear constraints} appear algebraically intricate, they stem from a straightforward geometric reduction. Specifically, the affine constraint $Ax=b$ allows any feasible solution $x$ to be explicitly parameterized as $x=A ^{\top } \left(AA ^{\top } \right)^{-1}b+By$. Here, $y \in \mathbb{R}^{d-\text{rank}\left(A\right)}$ serves as the coordinate vector in the reduced space, and the columns of the matrix $B \in \mathbb{R}^{d \times (d-\text{rank}\left(A\right))}$ constitute a basis for the null space $\mathcal{N}(A)$. This transformation seamlessly projects the original problem onto an affine subspace of dimension $d _{A}:= d-\text{rank}\left(A\right)$. Because convexity is preserved under affine composition, the functions $f _{i}(\cdot ),i=0,1,\dots,m$ remain convex over this restricted domain. By absorbing the equality constraints directly into the definition of the search space, the problem reduces to a standard inequality-constrained framework. As a result, the theoretical guarantees of Theorem \ref{theorem: convergence guarantee of vanilla ellipsoid method} carry over directly to this projected setting.

\begin{corollary}[]\label{corollary: convergence guarantee of ellipsoid method for problem with equality constraints}
Consider the convex optimization problem \eqref{eq: optimization problem with equality constraints}, and define the projected ellipsoid $\tilde{E}^{(k)}:=E ^{(k)}\cap \left\{x \left\lvert\right. Ax=b\right\}$. The volumetric decay rate is strictly bounded by
\begin{equation*}
\frac{\text{Vol}(\tilde{E} ^{(k+1)})}{\text{Vol}(\tilde{E} ^{(k)})}=\frac{d ^{d _{A}}}{(d+1)^{\frac{d _{A}+1}{2}}\cdot (d-1)^{\frac{d _{A}-1}{2}}}<1,
\end{equation*}
where $\text{Vol}(\cdot )$ denotes the Lebesgue measure in the $d _{A}$-dimensional affine subspace. Furthermore, suppose that for some $\epsilon >0$, the $\epsilon $-suboptimal feasible region $\left\{x \in \mathbb{R}^{d} \left\lvert\right. f(x)-f(x ^{\star })\le \epsilon , f _{i}(x)<0, Ax=b\right\}$ possesses a strictly positive $d _{A}$-dimensional volume, where $x ^{\star }$ represents the constrained global minimizer. Letting $x ^{\dagger }_{(M)}:=\mathop{\arg \min}\limits _{1\le i\le M}f(x ^{(i)})$ denote the best feasible iterate observed up to step $M$, we obtain the exponential convergence guarantee
\begin{equation*}
f(x ^{\dagger }_{(M)})-f(x ^{\star })\le c \cdot \exp\left\{-\frac{M}{2d _{A}^{2}}\right\},
\end{equation*}
where $c>0$ is an absolute constant determined solely by $f _{0}$.
\end{corollary}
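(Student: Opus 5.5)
The plan is to reduce the equality-constrained problem \eqref{eq: optimization problem with equality constraints} to an inequality-constrained problem of dimension $d _{A}$ and then invoke Theorem \ref{theorem: convergence guarantee of vanilla ellipsoid method}.

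\textbf{Step 1: reparameterize the affine subspace.} Fix $x _{0}:=A ^{\top } (AA ^{\top } )^{-1}b$ and a matrix $B\in \mathbb{R}^{d\times d _{A}}$ whose columns form an orthonormal basis of $\mathcal{N}(A)$. The map $y\mapsto x _{0}+By$ is then an isometric bijection from $\mathbb{R}^{d _{A}}$ onto $\{Ax=b\}$. Consequently, $d _{A}$-dimensional Lebesgue measure on the subspace equals Lebesgue measure in $y$-coordinates. Define $\tilde f _{i}(y):=f _{i}(x _{0}+By)$. These functions are convex, and $B ^{\top } g$ is a subgradient of $\tilde f _{i}$ whenever $g\in\partial f _{i}(x _{0}+By)$.

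\textbf{Step 2: verify that the updates are the reduced ellipsoid method.} The slice $\tilde E ^{(k)}=E ^{(k)}\cap\{Ax=b\}$ is an ellipsoid in $y$-coordinates, with center $y ^{(k)}$ satisfying $x ^{(k)}=x _{0}+By ^{(k)}$. This uses that $x ^{(0)}$ lies in the subspace and that every update direction $r ^{(k)}$ lies in $\mathcal{N}(A)$. The latter holds because
\[
A\bigl(P-PA ^{\top } (APA ^{\top } )^{-1}AP\bigr)=0 .
\]
The shape matrix of the slice is a Schur-complement-type restriction of $P ^{(k)}$. One checks that the matrix $P-PA ^{\top } (APA ^{\top } )^{-1}AP$ is exactly $B\tilde P B ^{\top }$, where $\tilde P$ is the shape matrix of the slice in $y$-coordinates. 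Hence $r ^{(k)}=B\tilde P\tilde g/\sqrt{\tilde g ^{\top } \tilde P\tilde g}$ with $\tilde g=B ^{\top } g$. So \eqref{eq: update of the ellipsoid method with linear constraints} restricted to the subspace coincides with the standard update \eqref{eq: update of the ellipsoid method without constraint}, applied in $\mathbb{R}^{d _{A}}$ to the cut $\tilde g ^{(k)}$. The only difference is that the scalars use $d$ in place of $d _{A}$.

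\textbf{Step 3: volume ratio.} The ratio is affine invariant, so we may normalize $\tilde E ^{(k)}$ to the unit ball and the cut direction to $e _{1}$. The new slice then has semi-axis $\frac{d}{d+1}$ along $e _{1}$ and $\frac{d}{\sqrt{d ^{2}-1}}$ in the remaining $d _{A}-1$ directions. Multiplying these gives
\[
\frac{d ^{d _{A}}}{(d+1) ^{\frac{d _{A}+1}{2}}(d-1) ^{\frac{d _{A}-1}{2}}} .
\]
This is less than $1$, since $d _{A}\le d$ and the logarithm of the expression is increasing toward its value at $d _{A}=d$, which is already below $1$.

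\textbf{Step 4: convergence.} Copy the proof of Theorem \ref{theorem: convergence guarantee of vanilla ellipsoid method} in $y$-space. Suppose no feasible iterate among the first $M$ were $\delta$-suboptimal. Every cut only discards points that are infeasible or worse than an observed feasible value, so the $\delta$-suboptimal feasible region would remain inside $\tilde E ^{(M)}$. Its positive volume, compared against the geometric decay of $\text{Vol}(\tilde E ^{(M)})$, then gives an exponential bound on the achievable $\delta$. Getting the stated rate $\exp\{-M/(2d _{A} ^{2})\}$ requires a per-step contraction of at most $e ^{-1/(2d _{A})}$. Since the updates use $d$ rather than $d _{A}$, the contraction is weaker, so I would either argue with $d$ and note $d\ge d _{A}$, or interpret the scalars as $d _{A}$ in the update. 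This mismatch, together with the Schur-complement identity in Step 2, is the step I expect to be the main obstacle. The constant $c$ then comes from the usual Lipschitz/range argument on $f _{0}$ over the initial ellipsoid.
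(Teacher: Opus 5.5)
Your Steps 1--3 use the same reduction as the paper: parameterize $\{Ax=b\}$ as $x_0+By$ and view the iteration as an ellipsoid method in $\mathbb{R}^{d_A}$. They are correct, and you verify more than the paper does. The identity $P-PA^\top(APA^\top)^{-1}AP=B(B^\top P^{-1}B)^{-1}B^\top$ is one half of what shows that \eqref{eq: update of the ellipsoid method with linear constraints} is the reduced update with the scalars still equal to $d$. The other half is a Sherman--Morrison check, which you should state explicitly: that $P^{(k+1)}$ slices to $\tfrac{d^2}{d^2-1}\bigl(\tilde P-\tfrac{2}{d+1}\tilde r\tilde r^\top\bigr)$. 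With both in hand, your ratio in Step 3 is right.

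The gap is the one you flagged in Step 4, and neither of your fallbacks closes it.
\begin{itemize}
\item \emph{Arguing with $d$.} By the monotonicity in $d_A$ from Step 3, the per-step ratio is at most its value at $d_A=d$, hence below $e^{-1/(2d)}$. The volume argument then gives $f(x^\dagger_{(M)})-f(x^\star)\le c\,\exp\{-M/(2dd_A)\}$. Since $d\ge d_A$, this is \emph{weaker} than the claimed $c\,\exp\{-M/(2d_A^2)\}$, so noting $d\ge d_A$ points the wrong way.
\item \emph{Replacing the scalars by $d_A$.} This does give $\exp\{-M/(2d_A^2)\}$, but for a different algorithm. The first display would then have to become $d_A^{d_A}/\bigl((d_A+1)^{(d_A+1)/2}(d_A-1)^{(d_A-1)/2}\bigr)$.
\end{itemize}

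In fact no argument can work: with the update as written, the second claim is false when $d_A<d$. Take $d=2$, $A=e_2^\top$, $b=0$ (so $d_A=1$), $f(x)=|x_1+1|$, no inequality constraints, $x^{(0)}=\mathbf{0}$ and $P^{(0)}=\mathbf{I}_2$.
\begin{itemize}
\item Every center lies to the right of $x^\star=(-1,0)^\top$, so every cut uses $g=e_1$.
\item Induction gives $x^{(k)}=(-1+(2/3)^k,0)^\top$, with slice $[-1,-1+2(2/3)^k]\times\{0\}$. This is consistent with the first display, whose ratio here is $2/3$.
\item Hence $f(x^\dagger_{(M)})-f(x^\star)=(2/3)^M$. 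Since $2/3>e^{-1/2}$, no constant $c$ gives $(2/3)^M\le c\,e^{-M/2}$.
\end{itemize}

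So the corollary holds as stated only when $d_A=d$. What your argument actually proves is the (correct) volume ratio together with the rate $\exp\{-M/(2dd_A)\}$. The paper's own justification, that Theorem \ref{theorem: convergence guarantee of vanilla ellipsoid method} ``carries over directly'' to the reduced problem, tacitly assumes the $d_A$-scalar update. It therefore has exactly the same hole, and your proposal is the more careful of the two.
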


The pseudo code of the ellipsoid method when equality constraints present is shown in Algorithm \ref{algorithm: ellipsoid method with equality constraints}.

\begin{algorithm}[htbp]
    \caption{ellipsoid method with (equality) constraints}\label{algorithm: ellipsoid method with equality constraints}

    \KwInput{$d, f _{0},...,f _{m},x ^{(0)},P ^{(0)},\epsilon ,c ^{\prime }$}

    $k=0, x _{\text{best}}=\mathbf{0}, v _{\text{best}}=\infty $

    $M=c ^{\prime }d ^{2}\ln \left(\frac{1}{\epsilon }\right)$

    \While{$k<M$}{
      \If{$x ^{(k)}$ is feasible}{

        \If{$f _{0}(x ^{(k)})<v _{\text{best}}$}{
          $x _{\text{best}}=x ^{(k)}$

          $v _{\text{best}}=f _{0}(x ^{(k)})$
        }

        Select any $g ^{(k)}\in \partial f _{0}(x ^{(k)})$
      }
      \Else{
        Find the smallest $i$ such that $f _{i}(x ^{(k)})\ge 0$

        Select any $g ^{(k)}\in \partial f _{i}(x ^{(k)})$
      }
      
      Set $r ^{(k)}=\frac{\left(P ^{(k)}-P ^{(k)}A ^{\top } \left(AP ^{(k)}A ^{\top } \right)^{-1}AP ^{(k)}\right)g ^{(k)}}{\sqrt{g ^{(k)\top } \left(P ^{(k)}-P ^{(k)}A ^{\top } \left(AP ^{(k)}A ^{\top } \right)^{-1}AP ^{(k)}\right)g ^{(k)}}}$

      Update $x ^{(k+1)}=x ^{(k)}-\frac{1}{d+1}r ^{(k)}$

      Update $P ^{(k+1)}=\frac{d ^{2}}{d ^{2}-1}\left(P ^{(k)}-\frac{2}{d+1}r ^{(k)}r ^{(k)\top }\right)$

      $k=k+1$
    }

    \KwOutput{$x _{\text{best}},v _{\text{best}}$}
\end{algorithm}

\subsection{Ellipsoid Method with Approximate Subgradient}

A prerequisite of the classical ellipsoid method is the availability of exact subgradients for the functions $f _{i}$ ($i=0,1,\dots,m$) to accurately determine the cutting planes. In our setting, however, Lemma \ref{lemma: approximate subgradient} demonstrates that the subgradient information for $h _{S}(X)$ is limited to the rank-one matrix $\mathcal{O}(X)\mathcal{O}(X)^{\top }$, which merely acts as a $\kappa $-approximate subgradient for some constant $\kappa >1$. Geometrically, the ellipsoid method can be viewed as an elimination process: rather than directly isolating the true minimizer, each iteration systematically prunes the region $E ^{(k)}\cap (E ^{(k+1)})^{\complement }$ mathematically precluded from containing it.

Specifically, at iteration $k$, any point $x$ satisfying the strict halfspace inequality $\left\langle x-x ^{(k)}, g ^{(k)} \right\rangle >0$ yields an objective value strictly worse than $f(x ^{(k)})$, fundamentally disqualifying it as the minimizer. By substituting the exact $g ^{(k)}$ with the approximate oracle $\mathcal{O}(X ^{(k)})\mathcal{O}(X ^{(k)})^{\top }$ in the update step, we essentially compromise the exactness of this elimination. Consequently, the guarantee of global exact minimization is lost; instead, the algorithm converges to an approximate optimum $X _{M\text{-best}}:=\mathop{\arg \min}\limits _{X \in \left\{X ^{(1)},\dots,X ^{(M)}\right\}}h _{S}(X)$ such that $f(X ^{\star })\le f(X _{M\text{-best}})\lesssim \kappa f(X ^{\star })$. We rigorously establish this convergence behavior in the following theorem.

\begin{theorem}[]\label{theorem: convergence guarantee of approximate ellipsoid method}
Consider the semidefinite program \eqref{eq: SDP problem} for approximating the Kolmogorov widths. Substituting the exact subgradient $g(\cdot ) \in \partial h _{S}(\cdot )$ with the approximate oracle $\mathcal{O}(\cdot )\mathcal{O}(\cdot )^{\top } $ in Algorithm \ref{algorithm: ellipsoid method with equality constraints} yields the following strict volumetric decay:
\begin{equation*}
\frac{\text{Vol}(\tilde{E} ^{(k+1)})}{\text{Vol}(\tilde{E} ^{(k)})}=\frac{d ^{2 \tilde{d}}}{(d ^{2}+1)^{\frac{\tilde{d}+1}{2}}\cdot (d ^{2}-1)^{\frac{\tilde{d}-1}{2}}}<1.
\end{equation*}
Let $X _{M\text{-best}}:=\mathop{\arg \min}\limits _{X \in \left\{X ^{(1)},\dots,X ^{(M)}\right\}}h _{S}(X)$ denote the best feasible iterate generated after $M$ steps of Algorithm \ref{algorithm: ellipsoid method with equality constraints}. The suboptimality gap with respect to the true global minimum $X ^{\star }$ is bounded by
\begin{equation*}
h _{S}(X _{M\text{-best}})\le \kappa \left(h _{S}(X ^{\star }) + c \cdot \exp\left\{-\frac{M}{2 \tilde{d}^{2}}\right\}\right).
\end{equation*}
Consequently, to achieve a $\kappa $-approximate solution up to an additive precision $\epsilon >0$, such that
\begin{equation*}
h _{S}(X ^{\star })\le h _{S}(X _{M \text{-best}})\le \kappa \left(h _{S}(X ^{\star })+\epsilon \right),
\end{equation*}
the algorithm requires at most $M \le c ^{\prime }\tilde{d}^{2}\cdot \ln \left(\frac{1}{\epsilon }\right)$ iterations. Here, $\tilde{d}=\frac{(d-1)(d+2)}{2}$ represents the intrinsic dimension of the search space under the trace (consumes $1$ degree of freedom) and symmetry constraints (consume $\frac{d(d-1)}{2}$ degrees of freedom).
\end{theorem}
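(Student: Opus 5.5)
The plan is to reduce to the exact-subgradient analysis of Corollary \ref{corollary: convergence guarantee of ellipsoid method for problem with equality constraints}, and let Lemma \ref{lemma: approximate subgradient} absorb the loss of exactness. First I will parametrize the feasible affine set. Symmetric matrices with $\text{tr}(X)=d-k$ form an affine subspace of dimension $\tilde d$, so I write $X=\frac{d-k}{d}\mathbf I_d+By$ with $y\in\mathbb R^{\tilde d}$. The inequality constraints $\mathbf 0\preceq X\preceq\mathbf I_d$ become the convex constraints $\lambda_{\max}(-X)\le0$ and $\lambda_{\max}(X-\mathbf I_d)\le 0$. Their exact subgradients are $-vv^\top$ and $vv^\top$, where $v$ is the corresponding extreme eigenvector, and these are computable in polynomial time. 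So feasibility cuts are exact, and only objective cuts use $\mathcal O(X)\mathcal O(X)^\top$.

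The volume ratio follows from the standard calculation. The update \eqref{eq: update of the ellipsoid method with linear constraints}, restricted to the affine slice, is a minimum-volume ellipsoid step in the $\tilde d$-dimensional coordinates. Normalizing $\tilde E^{(k)}$ to the unit ball by an affine map and computing the determinant of the rank-one update gives the stated product formula with the given exponents. That the ratio is strictly less than $1$ follows from $\ln$-concavity, since the geometric mean of $d^2+1$ and $d^2-1$ is below $d^2$. This step is routine.

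The main step, and the one I expect to be the obstacle, is the suboptimality bound. Fix $\delta>0$ and consider the set $S_\delta:=\{X \text{ feasible}: h_S(X)\le h_S(X^\star)+\delta\}$, which is convex. Its volume in the slice is at least $(\delta/c)^{\tilde d}$ times a fixed volume, because $h_S$ is Lipschitz on the bounded feasible set (here balancedness gives $K\subset B_d(0,R)$) and the feasible set contains a ball around $\frac{d-k}{d}\mathbf I_d$ in the slice. The key claim is that points of $S_\delta$ are only removed by objective cuts at iterates $X^{(j)}$ with $h_S(X^{(j)})\le\kappa(h_S(X^\star)+\delta)$. Feasibility cuts are exact, so they never remove feasible points. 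For an objective cut at $X^{(j)}$ that removes $X'\in S_\delta$, we have $\mathcal O(X^{(j)})^\top(X'-X^{(j)})\mathcal O(X^{(j)})>0$. Lemma \ref{lemma: approximate subgradient} then gives $h_S(X')\ge \frac1\kappa h_S(X^{(j)})$, hence $h_S(X^{(j)})\le\kappa h_S(X')\le \kappa(h_S(X^\star)+\delta)$.

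Since the volumes decay geometrically, after $M$ steps $\text{Vol}(\tilde E^{(M)})$ is smaller than $\text{Vol}(S_\delta)$ once $\delta\ge c\exp\{-M/(2\tilde d^2)\}$. This uses the per-step bound $e^{-1/(2\tilde d)}$-type decay, which I would verify from the ratio formula in the same way as Theorem \ref{theorem: convergence guarantee of vanilla ellipsoid method}. So some point of $S_\delta$ must have been cut, and that forces a feasible iterate with $h_S\le\kappa(h_S(X^\star)+\delta)$. This proves the bound for $X_{M\text{-best}}$. The delicate point is that the best iterate must be evaluated with the true $h_S$. If only $\mathcal O$-values are available, I would instead select by $\mathcal O(X)^\top X\mathcal O(X)$, which loses at most another factor of $\kappa$ absorbed into constants. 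Finally, setting $\delta=\epsilon$ gives $M\le c'\tilde d^2\ln(1/\epsilon)$. The lower bound $h_S(X^\star)\le h_S(X_{M\text{-best}})$ is immediate from the optimality of $X^\star$.
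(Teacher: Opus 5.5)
Your main argument is the same as the paper's. Feasibility cuts are exact, so feasible points are only removed by objective cuts. An objective cut at $X^{(j)}$ that removes a point $X'$ of the $\delta$-suboptimal feasible set forces $h_S(X^{(j)})\le\kappa h_S(X')$ by Lemma~\ref{lemma: approximate subgradient}. Once the slice volume of $\tilde E^{(M)}$ falls below that of this set, such a removal must have happened; this needs the set to start inside $\tilde E^{(0)}$, which holds because $\|X\|_F^2\le d-k$ on the feasible set. Restricting $S_\delta$ to feasible points and bounding its volume from below quantitatively is more careful than the paper, which only asserts positive volume. A homothetic copy $X^\star+t(F-X^\star)$ of the feasible set $F$, with $t\propto\delta$, also gives the bound using only convexity and boundedness of $h_S$ on $F$.

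The volume-ratio step, which you call routine, is argued incorrectly.

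First, the update is not the minimum-volume step in the $\tilde d$-dimensional slice; that step would give $\tilde d^{\tilde d}/\bigl((\tilde d+1)^{(\tilde d+1)/2}(\tilde d-1)^{(\tilde d-1)/2}\bigr)$. The stated formula is what the standard rank-one update with the ambient parameter $n=d^2$ produces on an $m=\tilde d$-dimensional slice, namely $n^m/\bigl((n+1)^{(m+1)/2}(n-1)^{(m-1)/2}\bigr)$. So containment of the half-ellipsoid must be checked directly rather than inherited from minimality. It does hold for every $n\ge 2$: after normalizing to the unit ball, the hemisphere point with first coordinate $-t$ has ellipsoid value $[n^2-2(n+1)t(1-t)]/n^2\le 1$.

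Second, your reason for the ratio being below $1$ points the wrong way. The inequality $\sqrt{(d^2+1)(d^2-1)}<d^2$ makes the symmetric part of the ratio \emph{exceed} $1$. The correct bookkeeping is
\[
\frac{d^{2\tilde d}}{(d^2+1)^{\frac{\tilde d+1}{2}}(d^2-1)^{\frac{\tilde d-1}{2}}}=\Bigl(1+\frac{1}{d^4-1}\Bigr)^{\frac{\tilde d-1}{2}}\cdot\frac{d^2}{d^2+1}.
\]
Its logarithm is at most $\frac{\tilde d-1}{2(d^4-1)}-\frac{1}{d^2+1}=\frac{\tilde d+1-2d^2}{2(d^4-1)}\approx-\frac{3}{4d^2}$. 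This is negative precisely because $\tilde d<2d^2-1$, and it gives the $e^{-\Theta(1/\tilde d)}$ per-step decay behind $M\lesssim\tilde d^2\ln(1/\epsilon)$. The condition is not automatic: with update parameter $d$ instead of $d^2$ on the same slice, the analogous ratio exceeds $1$ once $d>4$.

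Finally, in your closing remark, selecting iterates by the surrogate values $\mathcal O(X)^\top X\mathcal O(X)$ costs another factor $\kappa=\mathrm{polylog}(d)$, which is not a constant. The theorem avoids this by defining $X_{M\text{-best}}$ through the true $h_S$, and the paper treats surrogate selection separately in Corollary~\ref{corollary: property of X_k^dg}.
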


\begin{remark*}[]
  One might initially find the coexistence of the ambient dimension $d^2$ and the intrinsic dimension $\tilde{d}$ in the decay rate counterintuitive. This distinction naturally arises because our algorithm executes the matrix updates in the full ambient space $\mathbb{R}^{d \times d}$ (which introduces the $d^2$ terms in the update rules), whereas the effective search space —-- and consequently the Lebesgue measure of the feasible region —-- is confined to a $\tilde{d}$-dimensional affine subspace.
\end{remark*}

The rigorous proof of Theorem \ref{theorem: convergence guarantee of approximate ellipsoid method} is deferred to Appendix \ref{subsection: proof of the convergence property of the approximate ellipsoid method}. While Theorem \ref{theorem: convergence guarantee of approximate ellipsoid method} theoretically guarantees that the best historical iterate $X _{M \text{-best}}$ achieves the desired suboptimality bound, this specific matrix is practically unidentifiable since the exact objective function $h _{S}(X)$ is unknown. To circumvent this, we leverage the approximation oracle $\mathcal{O}(X)$ developed in Section \ref{section: linear approximation} to evaluate the iterates. Specifically, replacing the intractable objective $h _{S}(X ^{(i)})$ with its computable surrogate $\mathcal{O}(X ^{(i)})^{\top } X ^{(i)} \mathcal{O}(X ^{(i)})$ yields the fully empirical Approximate Ellipsoid Method, detailed in Algorithm \ref{algorithm: ellipsoid method for the sdp}. In Algorithm \ref{algorithm: ellipsoid method for the sdp}, the constraint functions $f _{1},f _{2}$ for the inequality constraints $\mathbf{0}\preceq X \preceq \mathbf{I}_{d}$ can be obtained from the singular vectors regarding the maximal/minimal eigenvectors of $X ^{(k)}$. Specifically, $X \preceq \mathbf{I}_{d}$ is equivalent to $\lambda _{\text{max}}(X)\le 1$, and a sub-gradient of $\lambda _{\text{max}}(X)$ is $v _{\text{max}}v _{\text{max}}^{\top } $, where $v _{\text{max}}$ is any eigenvector of $\lambda _{\text{max}}$; similarly, $\mathbf{X}\succeq \mathbf{0}$ is equivalent to $-\lambda _{\text{min}}(X)\le 0$, with sub-gradient $-v _{\text{min}}v _{\text{min}}^{\top } $. In Theorem \ref{theorem: convergence guarantee of approximate ellipsoid method}, the scaling constant $c ^{\prime }$ is chosen to be sufficiently large and depends solely on $\epsilon $ and $K$, we define the practically selected optimum as $X _{k}^{\dagger }:=\mathop{\arg \min}\limits _{X \in \left\{X ^{(1)},\dots,X ^{(M)}\right\}}\mathcal{O}(X)^{\top } X \mathcal{O}(X)$, which immediately yields the following corollary.

\begin{corollary}[]\label{corollary: property of X_k^dg}
  For the output $X _{k}^{\dagger }$ of Algorithm \ref{algorithm: ellipsoid method for the sdp}, we have 
  \begin{equation}
    \frac{1}{\kappa }h _{S}(X ^{\star })\le \mathcal{O}(X _{k}^{\dagger })^{\top } X _{k}^{\dagger }\mathcal{O}(X _{k}^{\dagger })\le \kappa h _{S}(X ^{\star }).
  \end{equation}
\end{corollary}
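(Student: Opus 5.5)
Write $v(X):=\mathcal{O}(X)^{\top} X\,\mathcal{O}(X)$ for the surrogate value computed by the algorithm. The proof is a sandwich argument built from three facts. From \eqref{eq: property of the oracle} and $\mathcal{O}(X)\in K$, every feasible $X$ satisfies
\[
\frac{1}{\kappa}h_S(X)\le v(X)\le h_S(X).
\]
By optimality of $X^{\star}$ over the feasible set, $h_S(X^{\star})\le h_S(X)$ for every feasible iterate $X$. Finally, Theorem \ref{theorem: convergence guarantee of approximate ellipsoid method} bounds the best iterate: $h_S(X_{M\text{-best}})\le \kappa\bigl(h_S(X^{\star})+\epsilon\bigr)$ once $M\ge c'\tilde d^{2}\ln(1/\epsilon)$.

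\textbf{Lower bound.} $X_k^{\dagger}$ is one of the feasible iterates, so it suffices to chain
\[
v(X_k^{\dagger})\ge \frac{1}{\kappa}h_S(X_k^{\dagger})\ge \frac{1}{\kappa}h_S(X^{\star}).
\]
The first step is the oracle guarantee and the second is optimality of $X^{\star}$.

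\textbf{Upper bound.} Since $X_k^{\dagger}$ minimizes $v$ over the iterates and $X_{M\text{-best}}$ is among them,
\[
v(X_k^{\dagger})\le v(X_{M\text{-best}})\le h_S(X_{M\text{-best}})\le \kappa\bigl(h_S(X^{\star})+\epsilon\bigr).
\]
The second step holds because $\mathcal{O}(X)\in K$, and the third is Theorem \ref{theorem: convergence guarantee of approximate ellipsoid method}.

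\textbf{Main obstacle: removing the additive $\epsilon$.} To reach exactly $\kappa\,h_S(X^{\star})$ we need a positive lower bound on $h_S(X^{\star})$. Balancedness gives $B_d(0,r)\subset K$, so
\[
h_S(X)\ge r^{2}\lambda_{\max}(X)\ge r^{2}\,\frac{\mathrm{tr}(X)}{d}=r^{2}\,\frac{d-k}{d}\quad\text{for } k<d.
\]
The case $k=d$ is trivial: then $X=\mathbf{0}$ and both sides vanish. For $k<d$, choosing $\epsilon$ as a small fraction of $r^{2}(d-k)/d$ turns the additive error into a multiplicative $1+o(1)$ factor. Since $c'$ is allowed to depend on $\epsilon$ and $K$, this fits the paper's convention that $\kappa$ is defined only up to absolute constants (``$\gtrsim$'' in Theorem \ref{theorem: main theorem of the cited paper}). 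So we can absorb $(1+\epsilon/h_S(X^{\star}))$ into $\kappa$, and this costs only $\ln(d/r)$ extra iterations, which remains polynomial.

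One further point needs care: the iterates over which $X_k^{\dagger}$ is selected must be the feasible ones, as in Algorithm \ref{algorithm: ellipsoid method with equality constraints}. Otherwise $h_S(X^{\star})\le h_S(X)$ could fail in the lower-bound chain.
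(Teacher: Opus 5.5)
Your sandwich argument is the paper's proof: the lower bound chains the oracle guarantee \eqref{eq: property of the oracle} with optimality of $X^{\star}$, and the upper bound chains three facts: minimality of $X_k^{\dagger}$ over the feasible iterates, $\mathcal{O}(X)\in K$, and Theorem~\ref{theorem: convergence guarantee of approximate ellipsoid method}. The paper silently drops the additive $\epsilon$ from that theorem, so your balancedness bound $h_S(X^{\star})\ge r^{2}(d-k)/d$ is a correct justification of that step, at the cost of a factor $\bigl(1+\epsilon/h_S(X^{\star})\bigr)$ on $\kappa$.
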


\begin{algorithm}[t]
    \caption{ellipsoid method for SDP \eqref{eq: SDP problem}}\label{algorithm: ellipsoid method for the sdp}

    \KwInput{$d$, constraints $\text{tr}(X)=d-k,\mathbf{0}\preceq X \preceq \mathbf{I}_{d},X=X ^{\top } $, $X ^{(0)},P ^{(0)},\epsilon ,c ^{\prime }$}

    Deduce $A, f _{1},\partial f _{1},\dots$ from the equality and inequality constraints

    $k=0, X _{k}^{\dagger }=\mathbf{0}, v _{\text{best}}=\infty $

    $P ^{(0)}=4(d-k)\mathbf{I}_{d}$

    $M=c ^{\prime }d ^{2}\ln \left(\frac{1}{\epsilon }\right)$

    \While{$k<M$}{
      \If{$X ^{(k)}$ is feasible}{

        \If{$\mathcal{O}(X ^{(k)})^{\top } X ^{(k)}\mathcal{O}(X ^{(k)})<v _{\text{best}}$}{
          $X _{k}^{\dagger }=X ^{(k)}$

          $v _{\text{best}}=\mathcal{O}(X ^{(k)})^{\top } X ^{(k)}\mathcal{O}(X ^{(k)})$
        }

        $g ^{(k)}=\mathcal{O}(X ^{(k)})\mathcal{O}(X ^{k})^{\top } $
      }
      \Else{
        Find the smallest $i$ such that $f _{i}(X ^{(k)})\ge 0$

        Select any $g ^{(k)}\in \partial f _{i}(X ^{(k)})$
      }
      
      Set $r ^{(k)}=\frac{\left(P ^{(k)}-P ^{(k)}A ^{\top } \left(AP ^{(k)}A ^{\top } \right)^{-1}AP ^{(k)}\right)g ^{(k)}}{\sqrt{g ^{(k)\top } \left(P ^{(k)}-P ^{(k)}A ^{\top } \left(AP ^{(k)}A ^{\top } \right)^{-1}AP ^{(k)}\right)g ^{(k)}}}$

      Update $X ^{(k+1)}=X ^{(k)}-\frac{1}{d+1}r ^{(k)}$

      Update $P ^{(k+1)}=\frac{d ^{2}}{d ^{2}-1}\left(P ^{(k)}-\frac{2}{d+1}r ^{(k)}r ^{(k)\top }\right)$

      $k=k+1$
    }

    \KwOutput{$X _{k}^{\dagger },v _{\text{best}}$}
\end{algorithm}

\begin{proof}
  The first inequality follows from \eqref{eq: property of the oracle} and the definition of $X ^{\star }$:
  \begin{equation*}
    \mathcal{O}(X _{k}^{\dagger })^{\top } X _{k}^{\dagger }\mathcal{O}(X _{k}^{\dagger })\ge \frac{1}{\kappa }h(X _{k}^{\dagger })\ge \frac{1}{\kappa }h _{S}(X ^{\star }).
  \end{equation*}
  The second inequality follows from the definition of $X _{k}^{\dagger }$, (unknown) $X _{M \text{-best}}$, $h _{S}$ and Theorem \ref{theorem: convergence guarantee of approximate ellipsoid method}:
  \begin{equation*}
    \mathcal{O}(X _{k}^{\dagger })^{\top } X _{k}^{\dagger }\mathcal{O}(X _{k}^{\dagger })\le \mathcal{O}(X _{M \text{-best}})^{\top } X _{M \text{-best}}\mathcal{O}(X _{M \text{-best}})\le h _{S}(X _{M \text{-best}})\le \kappa h _{S}(X ^{\star }).
  \end{equation*}
  The proof is completed.
\end{proof}

Moving forward, we denote $\tilde{h} _{S}(X _{k}^{\dagger }):=\mathcal{O}(X _{k}^{\dagger })^{\top } X _{k}^{\dagger }\mathcal{O}(X _{k}^{\dagger })$ as the resulting approximate optimal value of $X _{k}^{\dagger }$.

\section{Testing Procedures}\label{section: testing procedures}

\subsection{Approximate Optimal Dimension}\label{subsection: approximate optimal dimension}

Armed with the approximation frameworks developed for the inner maximization \eqref{eq: inner optimization problem} in Section \ref{section: linear approximation} and the outer minimization in Section \ref{section: approximate subgradient and ellipsoid method}, we are now prepared to bypass the computational intractability inherent in evaluating the exact Kolmogorov widths. Recall that the SDP \eqref{eq: SDP problem} acts as a rigorous convex relaxation of the original width computation, with $X _{k}^{\dagger }$ serving as its computationally efficient approximate solution. Consequently, for general balanced, \hyperref[def_type_2_condition]{type-$2$} and \hyperref[def_exact_2_convex_norm]{2-convex} sets where $D _{k}(K)$ remains analytically or computationally elusive, it is a natural progression to substitute the true width with the surrogate quantity $\sqrt{\tilde{h}_{S}(X _{k}^{\dagger })}$. Building upon this practical substitution, we formally define the \textit{approximate optimal dimension} in terms of $\tilde{h}_{S}(X _{k}^{\dagger })$ as follows.

\begin{definition}[Approximate optimal Dimension]\label{def_approximate_optimal_dimensions}
  Define $\tilde{D}_{k}(K):=\sqrt{\tilde{h}_{S}(X _{k}^{\dagger })}, k=1,2\dots$ as the approximate Kolmogorov $k$-width. Assume that the data $\tilde{\mathbf{Y}}$ are drawn from the distribution $\mathcal{P}\in \mathscr{P}$. Let the function $f(\cdot ,\cdot ):\mathbb{N}\times \mathscr{P}\mapsto \mathbb{R}^{+}$, then the approximate optimal dimension regarding $K$ and $f$ is defined as 
  \begin{equation*}
    \tilde{k} ^{\star }_{f}:=\max\limits \left\{j \left\lvert\right. j \ge 0,\tilde{D}_{j-1}(K)>f(j,\mathcal{P})\right\},
  \end{equation*}
  where similarly, we additionally define $\tilde{D}_{k}(K)=\infty $ for $k<0$ and $\tilde{D}_{k}(K)=0$ when $k>\dim (\mathcal{X})$.
\end{definition}

As previously discussed, the exact first and second optimal dimensions are fundamental for characterizing the minimax rate in the robust $\ell _{2}$-norm testing framework \eqref{eq: basic testing problem}. When transitioning to their approximate counterparts—the approximate first and second optimal dimensions, which are obtained by selecting the identical criterion $f(j,\mathcal{P})$—we establish the following property.

\begin{lemma}[]\label{lemma: property of approximate first and second optimal dimensions}
  Let $f _{1}(j,\mathcal{P})=\frac{j ^{1/4}}{\sqrt{N}}\sigma ,f _{2}(j,\mathcal{P})=\frac{j ^{1/4}\sqrt{\epsilon }}{N ^{1/4}}\sigma $. For the exact first and second optimal dimensions $k _{1}^{\star },k _{2}^{\star }$ and the approximate first and second optimal dimensions $\tilde{k}_{1}^{\star }:=\tilde{k}_{f _{1}}^{\star },\tilde{k}_{2}^{\star }:=\tilde{k}_{f _{2}}^{\star }$, we have the following inequalities.
  \begin{equation}\label{eq: inequalities between exact and approximate first and second optimal dimensions}
    \begin{aligned}
      & \tilde{k}_{1}^{\star }\le \min\limits \left\{\kappa ^{2}(k _{1}^{\star }+1)-1,d\right\},\\ 
      & \tilde{k}_{2}^{\star }\le \min\limits \left\{\kappa ^{2}(k _{2}^{\star }+1)-1,d\right\}.
    \end{aligned}
  \end{equation}
\end{lemma}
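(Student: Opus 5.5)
The plan is to compare the approximate widths $\tilde{D}_{j}(K)=\sqrt{\tilde{h}_{S}(X_{j}^{\dagger})}$ with the exact widths $D_{j}(K)$ one index at a time. Then I would transfer that comparison to the optimal dimensions, using the fact that $f_1$ and $f_2$ are both of the form $c\, j^{1/4}$ with $c$ independent of $j$.

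\textbf{Step 1: pointwise width comparison.} For every admissible $j$, the second inequality of Corollary \ref{corollary: property of X_k^dg} gives $\tilde{h}_{S}(X_{j}^{\dagger})\le \kappa\, h_{S}(X^{\star}_{j})$. Here $h_{S}(X^{\star}_{j})$ is the optimal value of the SDP \eqref{eq: SDP problem}, and Lemma \ref{lemma: approximate Kolmogorov width} bounds that value by $D_{j}^{2}(K)$. Hence
\begin{equation*}
\tilde{D}_{j}(K)\le \sqrt{\kappa}\, D_{j}(K)\quad\text{for all } j.
\end{equation*}
For $j<0$ both sides are $\infty$, and for $j\ge d$ both sides are $0$ by the conventions in Definitions \ref{def_optimal_dimension} and \ref{def_approximate_optimal_dimensions}, so no separate case is needed there.

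\textbf{Step 2: rescaling the threshold.} For $f\in\{f_1,f_2\}$ we have $f(j)=c\,j^{1/4}$, so $f(j)/\sqrt{\kappa}=f(j/\kappa^{2})$, with $f$ extended to real arguments. Write $j=\tilde{k}^{\star}_{f}$. If $j=0$ the claim is trivial because $\kappa\ge1$. Otherwise the definition gives $\tilde{D}_{j-1}(K)>f(j)$, and Step 1 then yields
\begin{equation*}
D_{j-1}(K)>\frac{f(j)}{\sqrt{\kappa}}=f\!\left(\frac{j}{\kappa^{2}}\right).
\end{equation*}

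\textbf{Step 3: contradiction with the maximality of $k^{\star}_{f}$.} Suppose $j\ge \kappa^{2}(k^{\star}_{f}+1)$ and set $m:=k^{\star}_{f}+1$. Since $\kappa\ge1$, we have $m\le j$. Because $D_{\cdot}(K)$ is non-increasing, $D_{m-1}(K)\ge D_{j-1}(K)$. Because $f$ is increasing and $m\le j/\kappa^{2}$, we have $f(m)\le f(j/\kappa^{2})$. Combining these with Step 2 gives $D_{m-1}(K)>f(m)$, which contradicts the maximality of $k^{\star}_{f}$ in Definition \ref{def_optimal_dimension}. Therefore $\tilde{k}^{\star}_{f}<\kappa^{2}(k^{\star}_{f}+1)$. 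The bound $\tilde{k}^{\star}_{f}\le d$ is immediate, since $\tilde{D}_{j-1}(K)=0<f(j)$ once $j-1>d$. For the boundary index I would need to check the convention at $j-1=d$ carefully.

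\textbf{Main obstacle.} The hardest part is the off-by-one bookkeeping, not the analysis. Steps 1 to 3 give the strict bound $\tilde{k}^{\star}<\kappa^{2}(k^{\star}+1)$. This coincides with $\tilde{k}^{\star}\le\kappa^{2}(k^{\star}+1)-1$ whenever $\kappa^{2}(k^{\star}+1)$ is an integer, for example if $\kappa$ is chosen so that $\kappa^{2}\in\mathbb{N}$, which costs nothing since $\kappa$ is only specified up to constants. For general real $\kappa$, I would either enlarge $\kappa$ to $\lceil\kappa^{2}\rceil^{1/2}$ or read the stated bound with the floor implicit. I would also double-check that the corollary is applied with $X^{\star}$ equal to the exact SDP minimizer for the same index $j$, so that the $\kappa$ loss enters exactly once and appears as $\sqrt{\kappa}$ on the widths.
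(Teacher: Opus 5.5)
Your proof is correct and uses the same ingredients as the paper's proof. These are the pointwise bound $\tilde D_j(K)\le\sqrt{\kappa}\,D_j(K)$ from Corollary \ref{corollary: property of X_k^dg} and Lemma \ref{lemma: approximate Kolmogorov width}, monotonicity of the exact widths, and the scaling $\sqrt{\kappa}\,f(m)=f(\kappa^{2}m)$. The only difference is that the paper checks the single index $j=\kappa^{2}(k_1^{\star}+1)$ directly, while you argue by contradiction from the maximizer $\tilde k_f^{\star}$. Your route handles cleanly the fact that the approximate widths $\tilde D_{\cdot}(K)$ need not be monotone in $k$, a point the paper's single-index check leaves implicit.

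Your integrality caveat is well taken. Both arguments actually prove $\tilde k_f^{\star}<\kappa^{2}(k_f^{\star}+1)$, and the paper's proof tacitly treats $\kappa^{2}(k_1^{\star}+1)$ as an integer when it writes $\tilde D_j(K)$.
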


\begin{remark*}[]
  Lemma \ref{lemma: property of approximate first and second optimal dimensions} states that the ratios between $\tilde{k}_{i}^{\star }, k _{i}^{\star },i \in \left\{1,2\right\}$ are bounded by at most $\kappa ^{2}$. Recall that $\kappa =\text{polylog}(d)$.
\end{remark*}

\begin{proof}
  The proof of Theorem \ref{lemma: property of approximate first and second optimal dimensions} is a direct application of the non-increasing property of the Kolmogorov widths. We only prove for $k _{1}^{\star }$ and $\tilde{k}_{1}^{\star }$, the same logic holds for $k _{2}^{\star }$ and $\tilde{k}_{2}^{\star }$ as well. By definition of $k _{1}^{\star }$, we have 
  \begin{equation*}
    D _{k _{1}^{\star }}(K)\le \frac{(k _{1}^{\star }+1)^{\frac{1}{4}}}{\sqrt{N}}\sigma .
  \end{equation*}
  Recall from Corollary \ref{corollary: property of X_k^dg} and Lemma \ref{lemma: approximate Kolmogorov width}, we know (omit the arbitrarily small constant $\epsilon $)
  \begin{equation*}
    \tilde{D}_{k}(K)=\sqrt{\tilde{h}_{S}(X _{k}^{\dagger })}\le \sqrt{\kappa h _{S}(X _{k}^{\star })}\le \sqrt{\kappa }D _{k}(K).
  \end{equation*}
  Consider $j=\kappa ^{2}(k _{1}^{\star }+1)\ge k _{1}^{\star }+1$, by the non-increasing property of the Kolmogorov widths, we know 
  \begin{equation*}
    \tilde{D}_{j}(K)\le \sqrt{\kappa }D _{j}(K)\le \sqrt{\kappa }D _{k _{1}^{\star }}(K)\le \sqrt{\kappa }\frac{(k _{1}^{\star }+1)^{\frac{1}{4}}}{\sqrt{N}}\sigma =\frac{j ^{\frac{1}{4}}}{\sqrt{N}}\sigma .
  \end{equation*}
  By definition, we know that $\tilde{k}_{1}^{\star }\le j-1=\kappa ^{2}(k _{1}^{\star }+1)-1$. $\tilde{k}_{1}^{\star }\le d$ is trivial. The proof is completed.
\end{proof}

\subsection{A Simple Case}\label{subsection: a simple case}

To build intuition, we first consider a simplified, unconstrained version of \eqref{eq: testing problem} where $N=1, \epsilon =0$, and $K=\mathbb{R}^{d}$. Upon observing a single random vector $Y \in \mathbb{R}^{d}$ from $\mathcal{N}(\mu ,\sigma ^{2}\mathbf{I}_{d})$, we wish to test the following hypotheses:
\begin{equation}\label{eq: basic testing problem}
\begin{aligned}
& H _{0}: \mu =0,\\
& H _{1}: \left\lVert \mu \right\rVert_{2}^{}\ge \rho .
\end{aligned}
\end{equation}
This formulation represents the canonical mean testing problem in the Gaussian sequence model, a subject exhaustively investigated in nonparametric statistics. Notably, for any fixed testing error level $\alpha $, \cite{ingster2003nonparametric} established that the fundamental minimax separation rate scales as $\rho _{\text{critical}}\asymp d ^{\frac{1}{4}}\sigma $. Formally, the following theorem holds:
\begin{theorem}[\cite{ingster2003nonparametric}]\label{theorem: results by Ingster with no constraint}
  When $K=\mathbb{R}^{d}$, for the testing problem \eqref{eq: basic testing problem}, denote $R(\rho ):=\min\limits _{T:\mathcal{X}\rightarrow \left\{0,1\right\}}\max\limits _{\mu  :\left\lVert \mu  \right\rVert_{2}^{}\ge \rho }R(\mu ,T)$ as the minimax risk function under the testing function $T$, where 
  \begin{equation*}
    R(\mu ,T):=\text{Type I error}+\text{Type II error}=\mathbb{P}_{\mathbf{0}}(T(Y)=1)+\mathbb{P}_{\mu }(T(Y)=0).
  \end{equation*}
  Define $\rho _{\text{critical}}:=d ^{\frac{1}{4}}\sigma $, then \\
  \begin{tabular}{ll}
    (1), & when $\rho \gtrsim \rho _{\text{critical}}$, we have $R(\rho )=o(1)$;\\ 
    (2), & when $\rho \lesssim \rho _{\text{critical}}$, we have $R(\rho )=\varTheta (1)$.
  \end{tabular}
\end{theorem}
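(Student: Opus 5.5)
We prove the two halves of Theorem \ref{theorem: results by Ingster with no constraint} separately: an explicit chi-square test gives the upper bound, and a Bayesian mixture with a second-moment (chi-square divergence) computation gives the lower bound. Both rest only on Gaussian moment calculations.

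\textbf{Upper bound.} We use the statistic $T(Y)=\mathbbm{1}\{\|Y\|_2^2-d\sigma^2> t\}$ with threshold $t=C\sqrt{d}\,\sigma^2$.

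Under $H_0$, the quantity $\|Y\|_2^2/\sigma^2$ is $\chi^2_d$, so $\|Y\|_2^2-d\sigma^2$ has mean $0$ and variance $2d\sigma^4$. Chebyshev's inequality then bounds the Type I error by $2/C^2$.

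Under $H_1$, expand $\|Y\|_2^2-d\sigma^2=\|\mu\|_2^2+2\sigma\langle\mu,\xi\rangle+\sigma^2(\|\xi\|_2^2-d)$. The mean is $\|\mu\|_2^2$ and the variance is at most $c(\sigma^2\|\mu\|_2^2+d\sigma^4)$. Suppose $\|\mu\|_2^2\ge \rho^2\ge C'\sqrt d\sigma^2$ with $C'\gg C$. Then $\|\mu\|_2^2-t\gtrsim\|\mu\|_2^2$, and Chebyshev bounds the Type II error by
\[
\frac{c(\sigma^2\|\mu\|_2^2+d\sigma^4)}{\|\mu\|_2^4},
\]
which is small uniformly over $\|\mu\|_2\ge\rho$. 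Letting $C,C'$ grow makes $R(\rho)=o(1)$ when $\rho/\rho_{\text{critical}}\to\infty$.

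\textbf{Lower bound.} We put the prior $\mu=\frac{\rho}{\sqrt d}\eta$ with $\eta$ uniform on $\{\pm1\}^d$, so every $\mu$ in its support has $\|\mu\|_2=\rho$. Let $\mathbb P_\pi$ be the resulting mixture. By the standard reduction,
\[
R(\rho)\ge 1-\mathrm{TV}(\mathbb P_0,\mathbb P_\pi)\ge 1-\tfrac12\sqrt{\chi^2(\mathbb P_\pi\|\mathbb P_0)}.
\]
The Ingster second-moment identity gives $1+\chi^2=\mathbb E_{\mu,\mu'}\exp(\langle\mu,\mu'\rangle/\sigma^2)$, where $\mu,\mu'$ are independent draws from the prior. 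Since the coordinates factorize,
\[
1+\chi^2=\cosh\!\Big(\frac{\rho^2}{d\sigma^2}\Big)^{d}\le \exp\!\Big(\frac{\rho^4}{2d\sigma^4}\Big),
\]
using $\cosh x\le e^{x^2/2}$. Hence if $\rho\le c\,d^{1/4}\sigma$ with $c$ small, $\chi^2$ is bounded by a small constant. The total variation then stays bounded away from $1$, and $R(\rho)=\Theta(1)$.

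\textbf{Main obstacle.} The only step needing care is the Type II bound uniformly over arbitrarily large $\|\mu\|_2$. The variance term $\sigma^2\|\mu\|_2^2$ grows with $\mu$, so it must be checked that it is dominated by $\|\mu\|_2^4$, which holds since $\|\mu\|_2^2\gtrsim\sqrt d\sigma^2\ge\sigma^2$. Everything else is a routine moment computation.
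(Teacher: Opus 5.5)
Your proposal is correct and is the standard argument for this classical result. The paper gives no proof of this statement; it is quoted from \cite{ingster2003nonparametric}. The relevant comparison is therefore with the proof of Theorem~\ref{theorem: simple main results 1}. That theorem treats the same $\chi^2$-type statistic under a general covariance $\mathbf{0}\preceq\mathbf{\Sigma}\preceq\sigma^2\mathbf{I}_d$ with $\mathrm{tr}(\mathbf{\Sigma})=k\sigma^2$; taking $\mathbf{\Sigma}=\sigma^2\mathbf{I}_d$ and $k=d$ recovers your upper bound.

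There the fluctuations of $\|\zeta\|_2^2$ and $\mu^\top\zeta$ are controlled by the Hanson--Wright inequality and a Gaussian tail bound rather than by Chebyshev. This gives exponential tails, so the threshold is of order $\sigma^2\bigl(\sqrt{d\ln(1/\alpha)}+\ln(1/\alpha)\bigr)$ instead of $\sigma^2\sqrt{d/\alpha}$. The paper needs that later for union bounds over exponentially many subsets, in the proof of Theorem~\ref{theorem: raw main upper bound 1}. For the present $o(1)$ claim, your approach is sufficient and more elementary: the exact variance is $4\sigma^2\|\mu\|_2^2+2d\sigma^4$ (the cross term vanishes by symmetry), and Chebyshev then suffices.

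For the lower bound, the paper only alludes to ``the standard Le Cam's method''. Your Rademacher prior on the sphere of radius $\rho$, together with $1+\chi^2=\cosh^d\bigl(\rho^2/(d\sigma^2)\bigr)\le \exp\bigl(\rho^4/(2d\sigma^4)\bigr)$, is exactly that argument made explicit.

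To close the $\Theta(1)$ claim, you should also record two facts: $R(\rho)\le 1$ trivially (take $T\equiv 0$), and $R(\rho)$ is non-increasing in $\rho$. In addition, any bounded $\chi^2$ already keeps $\mathrm{TV}(\mathbb{P}_0,\mathbb{P}_\pi)$ bounded away from $1$, e.g.\ via $\mathbb{P}_\pi(A)\le\sqrt{\mathbb{P}_0(A)(1+\chi^2)}$. So your computation in fact yields $R(\rho)=\Theta(1)$ for $\rho\le C\,d^{1/4}\sigma$ with any fixed $C$, not only for small $c$.
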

When we operate under the constrained setting, i.e., $K \neq \mathbb{R}^{d}$, intuitively, the non-trivial additional information on the constraint should yield a refined rate of $\rho _{\text{critical}}$ compared to Theorem \ref{theorem: results by Ingster with no constraint}. \cite{li2026robustsignaldetectionquadratically} proved that for general QCO constraints, we have the following results
\begin{theorem}[\cite{li2026robustsignaldetectionquadratically}]\label{theorem: main results 1 of last paper}
  When $K$ is a QCO set, for the testing problem \eqref{eq: testing problem} with no corruption, define $\rho _{\text{critical}}:=\left(k _{1}^{\star }\right)^{\frac{1}{4}}\sigma $, then \\
  \begin{tabular}{ll}
    (1), & when $\rho \gtrsim \rho _{\text{critical}}$, we have $R(\rho )=o(1)$;\\ 
    (2), & when $\rho \lesssim \rho _{\text{critical}}$, we have $R(\rho )=\varTheta (1)$,
  \end{tabular}\\
  where $k _{1}^{\star }$ is the \textit{first optimal dimension}, i.e., selecting $f(j,\mathcal{P})=j ^{\frac{1}{4}}\sigma $ in Definition \ref{def_optimal_dimension}.
\end{theorem}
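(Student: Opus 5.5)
The plan is to prove the two halves separately. The upper bound (1) uses a chi-square-type test built from the optimal projection $P^{\star}_{k_1^\star}$. The lower bound (2) uses a Bayesian prior supported in $K$, where orthosymmetry and quadratic convexity supply a rich hyperrectangle inside $K$. Throughout we work with $N=1$, $\epsilon=0$, so the criterion is $f(j)=j^{1/4}\sigma$.

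\emph{Upper bound.} Let $k=k_1^\star$ and $P=P^\star_k$. Consider the statistic $T=\|PY\|_2^2-k\sigma^2$ and reject when $T>c_\alpha\sqrt{k}\,\sigma^2$. Under $H_0$, $\|PY\|_2^2/\sigma^2\sim\chi^2_k$, which has variance $2k$, so Chebyshev controls the Type I error. Under $H_1$, split the signal as $\|P\mu\|_2^2=\|\mu\|_2^2-\|(\mathbf{I}_d-P)\mu\|_2^2\ge\rho^2-D_k^2(K)$. By the definition of $k_1^\star$ we have $D_k(K)\le (k+1)^{1/4}\sigma$. Hence once $\rho\ge C(k_1^\star)^{1/4}\sigma$ with $C$ large, $\|P\mu\|_2^2\gtrsim\rho^2$. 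The variance of $T$ under the alternative is $2k\sigma^4+4\sigma^2\|P\mu\|_2^2$. Applying Chebyshev again gives a Type II error of order $\sqrt{k}\sigma^2/\|P\mu\|^2+\sigma/\|P\mu\|=o(1)$ as $C\to\infty$. This half only needs the monotonicity of $D_k$.

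\emph{Lower bound.} Since $D_{k-1}(K)>k^{1/4}\sigma$ with $k=k_1^\star$, I would invoke the QCO structure to find a hyperrectangle in $K$. The target statement is that there exist $\Omega(k)$ coordinates $S$ and scales $a_i$ with $\sum_{i\in S}a_i^2\gtrsim D_{k-1}^2(K)$ and $a_i$ roughly equal, such that $\{\theta:|\theta_i|\le a_i,\ i\in S\}\subset K$. To get this, use orthosymmetry plus convexity of $K^2$ to reduce the width of $K$ to the width of a solid, orthosymmetric, quadratically convex hull. For such sets, the classical Donoho--Liu--MacGibbon argument shows the hardest rectangular subfamily captures the Kolmogorov width up to constants. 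Then place the prior $\mu=\sum_{i\in S}\eta_i\,(\rho/\sqrt{|S|})e_i$ with Rademacher signs $\eta_i$, choosing $\rho\asymp c|S|^{1/4}\sigma$, which lies in $K$ by the rectangle inclusion. The standard Ingster computation bounds the chi-square divergence between the mixture and $\mathcal N(0,\sigma^2\mathbf{I}_d)$ by $\prod_i\cosh(\rho^2/(|S|\sigma^2))-1\le e^{\rho^4/(2|S|\sigma^4)}-1$, which is small. Hence $R(\rho)=\Theta(1)$.

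The main obstacle is the rectangle-extraction step. It requires showing that a Kolmogorov width lower bound $D_{k-1}(K)>k^{1/4}\sigma$ yields $\gtrsim k$ coordinates on which $K$ contains a box of side $\gtrsim\sigma k^{-1/4}$. My first attempt is to reason via the optimality of coordinate projections for orthosymmetric quadratically convex sets, so that $D_{k-1}$ equals $\sup_{\theta\in K}$ of the tail mass beyond the top $k-1$ coordinates. I would then pick a maximizing $\theta$ and use convexity of $K^2$ to flatten its squared entries into a near-uniform rectangle. Dyadic grouping of the entries would handle the loss from unequal magnitudes, at the cost of only constants or a logarithm.
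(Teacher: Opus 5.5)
\textbf{The upper bound is fine; the lower bound has a genuine gap.} Your upper bound is correct and is the route the paper describes: project onto $P^\star_{k_1^\star}$ and run the $\chi^2$ test of Theorem~\ref{theorem: simple main results 1} with covariance $\sigma^2P^\star_{k_1^\star}$. The gap is at the rectangle-extraction step, and the target you set yourself is false.

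\emph{The roughly-equal target fails.} Take the QCO hyperrectangle $K=\prod_{i=1}^d[-s/\sqrt{i},\,s/\sqrt{i}]$. Then $D_{k-1}^2(K)=s^2\sum_{i\ge k}1/i\approx s^2\ln(d/k)$. So when $k_1^\star\ll d$ one has $\sqrt{k_1^\star}\,\sigma^2\asymp s^2\ln(d/k_1^\star)$; for instance $s^2=\sigma^2d^{1/4}$ gives $k_1^\star\asymp\sqrt d\ln^2 d$. On the other hand, any box in $K$ with roughly equal sides $b$ on a set $S$ of $m$ coordinates has $mb^2\lesssim s^2$, since $b\le s/\sqrt{\max S}\le s/\sqrt m$. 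Dyadic grouping therefore loses a factor $\ln(d/k_1^\star)$, which is unbounded. So it proves (2) only up to a logarithm, not at the stated rate.

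\emph{The proposed mechanism is also unsound.}
\begin{itemize}
\item Donoho--Liu--MacGibbon is a statement about linear minimax risk, not about widths. Hyperrectangles do not capture Kolmogorov widths of QCO sets: for $K=B_d(0,1)$, every $\mathrm{Rect}(\tau)\subset K$ has $(k-1)$-width at most $\sqrt{(d-k+1)/d}$, while $D_{k-1}(K)=1$.
\item ``Pick a maximizing $\theta$'' conflates $\min_{|T|=k-1}\sup_{\theta\in K}\sum_{i\notin T}\theta_i^2$ with a supremum attained at a single point. A maximizer for the optimal $T$ can be $1$-sparse; for the $\ell_4$-ball of radius $A$ in $\mathbb{R}^k$ it is $\pm Ae_k$.
\item Consequently the $\Omega(k)$ coordinates must be assembled from other points of $K^2$. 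For a $K$ that is not permutation invariant, convexity of $K^2$ alone does not tell you which points to use.
\end{itemize}

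\textbf{The missing idea: duality with unequal sides.} Ingster's computation handles unequal sides at no extra cost. For $\mu=\sum_i\eta_ib_ie_i$ with Rademacher $\eta_i$, the $\chi^2$-divergence is $\prod_i\cosh(b_i^2/\sigma^2)-1\le\exp\big(\sum_ib_i^4/(2\sigma^4)\big)-1$. Set $t=c\sigma^2/\sqrt{k}$ with $c\le 1$. It suffices to find $s\in K^2$ with $\sum_i\min(s_i,t)\ge kt/2$. Then take $b_i^2=\min(s_i,t)$, shrunk so that $\sum_ib_i^2=kt/2$. This gives:
\begin{itemize}
\item $\mathrm{Rect}(b)\subset K$, by orthosymmetry and convexity;
\item $\|\mu\|_2^2=c\sqrt{k}\,\sigma^2/2$;
\item $\sum_ib_i^4\le t\sum_ib_i^2=c^2\sigma^4/2$, so the divergence is at most $e^{c^2/4}-1$.
\end{itemize}
To find $s$, write $\min(s_i,t)=\min_{w_i\in[0,1]}[w_is_i+(1-w_i)t]$ and apply the minimax theorem on the compact convex set $K^2$:
\[
\sup_{s\in K^2}\sum_i\min(s_i,t)=\min_{w\in[0,1]^d}\Big[\sup_{s\in K^2}\langle w,s\rangle+t\sum_i(1-w_i)\Big].
\]
Suppose the right side were $<kt/2$.
\begin{itemize}
\item The minimizing $w$ would then satisfy $\sum_i(1-w_i)<k/2$ and $\sup_{s\in K^2}\langle w,s\rangle<kt/2$.
\item Hence $T=\{i:w_i<1/2\}$ has $|T|\le k-1$.
\item Since $w_i\ge 1/2$ off $T$, $\sup_{\theta\in K}\sum_{i\notin T}\theta_i^2\le 2\sup_{s\in K^2}\langle w,s\rangle<kt\le\sqrt{k}\,\sigma^2$.
\end{itemize}
Projecting onto the coordinates in $T$ would then give $D_{k-1}(K)<k^{1/4}\sigma$, contradicting the definition of $k=k_1^\star$.

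This argument uses only the trivial bound of $D_{k-1}$ by a coordinate projection. So no optimality of coordinate projections is needed, and it yields (2) with no logarithmic loss.
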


Theorem \ref{theorem: main results 1 of last paper} attains a sharper rate than Theorem \ref{theorem: results by Ingster with no constraint} since $k _{1}^{\star }\le d$. The core intuition is to project the observation $Y$ onto the lower-dimensional subspace $H _{P _{k _{1}^{\star }}^{\star }}$ spanned by the optimal projection $P _{k _{1}^{\star }}^{\star }$, thereby reformulating the testing problem within $H _{P _{k _{1}^{\star }}^{\star }}$. The resulting minimax separation rate in this reduced subspace is then linked to the original problem through the Kolmogorov widths $D _{k _{1}^{\star }}(K)$. Unfortunately, this oracle-type approach is largely unimplementable, as it strictly requires the knowledge of $k _{1}^{\star }$ and $P _{k _{1}^{\star }}^{\star }$, and consequently $D _{k}(K)$, which is intractable in general as highlighted in Section \ref{subsection: relaxed optimization problem}. We will include more discussions in Section \ref{subsection: polynomial algorithm}. Before that, let us first introduce the following  scenario as a preliminary.

Consider a random vector $Y \in \mathbb{R}^{d}$ where $Y=\mu +\zeta ,\mu \in \mathbb{R}^{d},\zeta \sim \mathcal{N}(\mathbf{0},\mathbf{\Sigma })$ with $\mathbf{\Sigma }\in \mathbb{R}^{d \times d}$ is the covariance matrix with the constraint $\text{tr}(\mathbf{\Sigma })=k \sigma ^{2}$ and $\mathbf{0}\preceq \mathbf{\Sigma }\preceq \sigma ^{2}\cdot \mathbf{I}_{d}$, where $k$ and $\sigma $ are known. We attempt to obtain the minimax rate for $\rho $ in the testing problem \eqref{eq: basic testing problem}. Define the following testing function of $\chi ^{2}$-type:
\begin{equation}\label{eq: chi_square test}
  T _{\chi }(Y)=\mathbf{1}_{E}, E :=\left\{Y \left\lvert\right. \left\lVert Y\right\rVert_{2}^{2}-k \sigma ^{2}\ge t\right\},
\end{equation}
where $t$ is some appropriate threshold to be determined. $\chi ^{2}$ test is known to be minimax optimal for the testing problem \eqref{eq: basic testing problem} when $\mathbf{\Sigma }=\mathbf{I}_{d}$. Specifically, we have the following results.

\begin{theorem}[]\label{theorem: simple main results 1}
  Given the assumptions above, for the problem \eqref{eq: basic testing problem}, if $\rho \gtrsim k ^{\frac{1}{4}}\sigma $, the test \eqref{eq: chi_square test} achieves uniform small error less than $\alpha $. Consequently, we have $\rho _{\text{critical}}\lesssim k ^{\frac{1}{4}}\sigma $ in this scenario. 
\end{theorem}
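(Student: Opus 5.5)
Throughout, write $\mathbf{\Sigma}=U\Lambda U^\top$ with eigenvalues $\lambda_1,\dots,\lambda_d\in[0,\sigma^2]$ and $\sum_i\lambda_i=k\sigma^2$; all bounds will be expressed through the first two moments of the statistic $S:=\lVert Y\rVert_2^2-k\sigma^2$. Since $Y=\mu+\zeta$, we have
\begin{equation*}
  S=\lVert\mu\rVert_2^2+2\mu^\top\zeta+\bigl(\lVert\zeta\rVert_2^2-\operatorname{tr}(\mathbf{\Sigma})\bigr).
\end{equation*}
It follows that $\mathbb{E}_\mu S=\lVert\mu\rVert_2^2$. The variance is
\begin{equation*}
  \operatorname{Var}_\mu(S)=4\mu^\top\mathbf{\Sigma}\mu+2\operatorname{tr}(\mathbf{\Sigma}^2).
\end{equation*}
There is no cross term, because the odd Gaussian moments vanish. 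Using $\mathbf{0}\preceq\mathbf{\Sigma}\preceq\sigma^2\mathbf{I}_d$ we get $\operatorname{tr}(\mathbf{\Sigma}^2)=\sum_i\lambda_i^2\le\sigma^2\sum_i\lambda_i=k\sigma^4$ and $\mu^\top\mathbf{\Sigma}\mu\le\sigma^2\lVert\mu\rVert_2^2$. These two bounds are the only places where the trace and operator-norm constraints enter, and they give exactly the effective dimension $k$.

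\textbf{Type I error.} Under $H_0$, $S$ has mean $0$ and variance at most $2k\sigma^4$. Chebyshev's inequality gives $\mathbb{P}_0(S\ge t)\le 2k\sigma^4/t^2$. We choose $t=\sqrt{2k/\alpha}\,\sigma^2$, i.e.\ $t=c_\alpha\sqrt{k}\sigma^2$, so this bound is at most $\alpha$.

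\textbf{Type II error.} Under $H_1$ with $\lVert\mu\rVert_2^2=:\rho_\mu^2\ge\rho^2$, assume $\rho^2\ge 2t$, so that $\rho_\mu^2-t\ge\rho_\mu^2/2$. Chebyshev's inequality then gives
\begin{equation*}
  \mathbb{P}_\mu(S<t)\le\frac{4\sigma^2\rho_\mu^2+2k\sigma^4}{(\rho_\mu^2-t)^2}\le\frac{16\sigma^2}{\rho_\mu^2}+\frac{8k\sigma^4}{\rho_\mu^4}.
\end{equation*}
Both terms decrease in $\rho_\mu$, so it suffices to check them at $\rho_\mu=\rho$. Taking $\rho^2\ge C_\alpha\sqrt{k}\,\sigma^2$ with $C_\alpha$ large makes the second term at most $\alpha/2$, and it also ensures $\rho^2\ge 2t$.

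\textbf{Main obstacle.} The delicate point is the linear term $16\sigma^2/\rho^2$, which needs $\rho\gtrsim\sigma$ rather than $\rho\gtrsim k^{1/4}\sigma$. For $k\ge1$ this is automatic, since $\rho^2\ge C_\alpha\sqrt{k}\sigma^2\ge C_\alpha\sigma^2$ bounds the term by $16/C_\alpha\le\alpha/2$. In the degenerate case $k=0$ we have $\mathbf{\Sigma}=\mathbf{0}$ and $Y=\mu$, so the test is trivially exact. Both errors are uniform over $\mu$ because all bounds depend on $\mu$ only through $\rho_\mu\ge\rho$. This proves $\rho_{\text{critical}}\lesssim k^{1/4}\sigma$ with constants depending only on $\alpha$.
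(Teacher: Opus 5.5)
Your proof is correct and is essentially the paper's argument: the same split of $\lVert Y\rVert_2^2$ into $\lVert\mu\rVert_2^2$, the linear term $2\mu^\top\zeta$ and the centred quadratic term, controlled through the same two consequences of the constraints, $\operatorname{tr}(\mathbf{\Sigma}^2)\le k\sigma^4$ and $\mu^\top\mathbf{\Sigma}\mu\le\sigma^2\lVert\mu\rVert_2^2$, with the linear term absorbed via $k\ge 1$ as in the paper's final step. The only difference is that you apply Chebyshev to the exact variance where the paper uses a Gaussian tail bound and Hanson--Wright, which makes the constants polynomial rather than logarithmic in $1/\alpha$ but is immaterial for fixed $\alpha$; one small slip is the case $k=0$, where your formula gives $t=0$ and the non-strict rejection rule would then always reject under $H_0$, so there you should take any $t\in(0,\rho^2]$ instead.
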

Theorem \ref{theorem: simple main results 1} broadens the classical results by relaxing the requirement of an identity covariance matrix; in fact, it operates effectively even without full knowledge of $\mathbf{\Sigma }$. Denote the $d$ eigenvalues of $\mathbf{\Sigma }$ by $\left\{\lambda _{1}\sigma ^{2},\dots,\lambda _{d}\sigma ^{2}\right\}$, sorted as $\lambda _{1}\ge \dots \ge \lambda _{d}$, under the trace constraint $\sum\limits _{i=1}^{d}\lambda _{i}=k$. For any fixed covariance matrix $\mathbf{\Sigma }$, applying the standard Le Cam's method yields a minimax lower bound for the testing problem \eqref{eq: basic testing problem} scaling as $\rho _{\text{critical}}\gtrsim \left(\sum\limits _{i=1}^{d}\lambda _{i}^{2}\right)^{\frac{1}{4}}\sigma $. Imposing the constraints on $\lambda _{i}$ for $i=1,\dots,d$, it follows algebraically that $\frac{k}{d ^{1/4}}\le \left(\sum\limits _{i=1}^{d}\lambda _{i}^{2}\right)^{\frac{1}{4}}\le k ^{\frac{1}{4}}$. Therefore, when $\mathbf{\Sigma }$ is unknown and the minimax framework takes the supremum over all feasible $\mathbf{\Sigma }$, the rate established in Theorem \ref{theorem: simple main results 1} is minimax optimal. Another consequence of this result is that the separation rate is purely characterized by the energy parameters of the covariance matrix (i.e., $k$ and $\sigma $), entirely bypassing the original dimension $d$.

Theorem \ref{theorem: simple main results 1} can be viewed as a natural consequence of the celebrated Hanson--Wright inequality. The proof is deferred to Appendix \ref{subsection: proof of simple main results 1}.

\subsection{Robust Testing}\label{subsection: robust testing}

In this section, we consider a more realistic scenario. For now, let us assume that we have $N$ observations $\mathbf{Y}=\left\{Y _{1},\dots,Y _{N}\right\}$ contaminated by the adversary $\mathcal{C}$ from the \hyperref[def_epsilon_contamination_model]{strong $\epsilon $-contamination model}: $\mathbf{Y}=\mathcal{C}(\tilde{\mathbf{Y}})$. For such setting, \cite{li2026robustsignaldetectionquadratically} prove the following minimax lower bound when the constraint $K$ is a QCO set.

\begin{theorem}[\cite{li2026robustsignaldetectionquadratically}]\label{theorem: main lower bound of last paper}
    For the mean testing problem \eqref{eq: testing problem} with potential contamination from a strong adversary $\mathcal{C}$ and a prior constraint on the mean $\mu$ induced by a QCO set $K$, the following condition is necessary to ensure the existence of a valid test whose Type I and Type II errors are both uniformly below a prescribed constant $\alpha $
    \begin{equation}\label{eq: main theoretical lower bound}
        \rho ^{2}\gtrsim \sigma ^{2}\max\limits \left\{\frac{\sqrt{k _{1}^{\star }}}{N},\epsilon \sqrt{\frac{k _{2}^{\star }}{N}},\epsilon ^{2}\right\},
    \end{equation}
    where $N$ is the sample size, $\sigma ^{2}$ is the variance of the noise, and $\epsilon $ is the fraction of contamination. $k _{1}^{\star }$ and $k _{2}^{\star }$ are the previously defined first and second optimal dimensions.
\end{theorem}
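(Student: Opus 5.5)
The lower bound \eqref{eq: main theoretical lower bound} is a maximum of three terms, so I will prove it as three separate necessity statements. For each term I will build a null distribution and a prior on alternatives (possibly combined with an adversary strategy $\mathcal{C}$) such that the total variation distance between the induced observation laws is at most $1-2\alpha$ whenever $\rho$ is below that term. Le Cam's two-point method, or its fuzzy-hypothesis version, then shows that no test in $A_s$ can have Type II error at most $\alpha$. Every construction will live inside a $k$-dimensional coordinate box that is contained in $K$. Producing this box is where the QCO structure of $K$ and the definition of $k_1^\star, k_2^\star$ via Kolmogorov widths enter.

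\textbf{Geometric step.} For a QCO set, the Kolmogorov width $D_{k-1}(K)$ is comparable to the largest $r$ for which some $k$-dimensional orthosymmetric cube, or more generally hyperrectangle with side lengths $(r/\sqrt{k})$ along $k$ axes, fits inside $K$. I would use the known Bernstein-width / least-favorable-hyperrectangle facts for QCO sets, as in \cite{10.1214/aos/1176347758}: quadratic convexity and orthosymmetry make the hardest rectangular subproblem as hard as $K$ itself. Concretely, by the definition of $k^\star$ we have $D_{k^\star-1}(K) > f(k^\star,\mathcal{P})$. I would convert this into the existence of $\theta \in K$ supported on $k^\star$ coordinates, after a rotation that is harmless for QCO sets, with $|\theta_i|\gtrsim f(k^\star,\mathcal{P})/\sqrt{k^\star}$ on each of them. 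Orthosymmetry then puts all sign flips of $\theta$ in $K$. I expect this to be the main obstacle: the width is defined over arbitrary subspaces, while the prior needs an axis-aligned sub-box. I would try Lemma~\ref{lemma: QCO sets are type-2 and exactly 2-convex}-style 2-convexity arguments, reducing $K^2$-convexity to a linear program over the weights $\theta_i^2$.

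\textbf{Term $\sqrt{k_1^\star}/N$.} I take $\epsilon=0$ and use the prior $\mu=\frac{\rho}{\sqrt{k}}\,\eta$ with $\eta$ uniform random signs on $k=k_1^\star$ coordinates. The chi-square divergence of the mixture against the null is $\mathbb{E}\exp(N\langle\mu,\mu'\rangle/\sigma^2)-1\le \exp\bigl(N^2\rho^4/(k\sigma^4)\bigr)-1$. This is small when $\rho^2\lesssim \sigma^2\sqrt{k}/N$. The geometric step guarantees the support of this prior lies in $K$ at this scale, because $f_1(k)^2=\sigma^2\sqrt{k}/N$.

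\textbf{Terms $\epsilon\sqrt{k_2^\star/N}$ and $\epsilon^2$.} For $\epsilon^2$, two Gaussians $\mathcal{N}(0,\sigma^2)$ and $\mathcal{N}(\rho e,\sigma^2)$ with $\rho\lesssim\epsilon\sigma$ have TV distance at most $\epsilon$. An adversary can therefore couple the two sample sets exactly using an $\epsilon$ fraction of replacements, which is the Huber-to-strong reduction. Here $e$ is any direction with $\rho e\in K$, and balancedness of $K$ supplies one. For the middle term, I would follow the Narayanan et al.\ / Diakonikolas et al.\ robust mean-testing construction restricted to the $k_2^\star$-dimensional box. The alternative is a sign-mixture mean of size $\rho/\sqrt{k}$ per coordinate. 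The adversary adds $\epsilon N$ corrupted points so that the contaminated mixture matches the first two moments of the null per coordinate. The residual chi-square divergence then scales like $N^2\rho^8/(\epsilon^4\sigma^8 k^2)$ after moment matching, and this is small exactly when $\rho^2\lesssim\epsilon\sigma^2\sqrt{k/N}$, which matches $f_2$. Combining the three necessity statements yields \eqref{eq: main theoretical lower bound}.
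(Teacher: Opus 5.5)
The paper gives no proof of this theorem; it imports it from \cite{li2026robustsignaldetectionquadratically}. Your sketch therefore has to stand on its own, and it has two genuine gaps.

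\textbf{The geometric step is false, and your first two terms rest on it.} $D_{k-1}(K)$ is not comparable to the largest equal-sided $k$-dimensional coordinate cube in $K$. For $K=[-1,1]^d$ you have $D_0(K)=\sqrt d$, yet no coordinate segment in $K$ has half-length above $1$. The one-sided implication you actually use also fails.
\begin{itemize}
\item Take the QCO box $K=[-A,A]^m\times[-a,a]^{d-m}$ with $A$ huge, $m<k\ll d$, $t=\sigma^2/N$ and $a^2=4\sqrt{k}\,t/d$.
\item For $j>m$ you have $D_{j-1}^2(K)=(d-j+1)a^2$, so $k\le k_1^\star<16k$.
\item Only $m<k_1^\star$ coordinates allow $\theta_i^2>a^2$. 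Moreover $a^2$ is a factor $16k/d$ below the level $f_1(k_1^\star)^2/k_1^\star\ge t/(4\sqrt k)$ that you require.
\item So an equal-magnitude sign prior on $k_1^\star$ coordinates certifies only $\rho^2\lesssim k_1^\star a^2\ll\sqrt{k_1^\star}\,t$.
\end{itemize}
Rotations are not harmless either: only signed permutations preserve QCO structure.

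The bound still holds in this example, via the sign prior on the $d-m$ tail coordinates. That prior is spread over far more than $k_1^\star$ coordinates. The right reduction is as follows. For any $\theta\in K$, the prior $\eta\odot\theta$ has $\chi^2\le\exp\bigl(N^2\|\theta\|_4^4/(2\sigma^4)\bigr)-1$. So it suffices to show that $D_{k-1}^2(K)>\sqrt{k}\,t$ implies $\sup\{\|\theta\|_2^2:\theta\in K,\ \|\theta\|_4^2\le c\,t\}\gtrsim\sqrt{k}\,t$.

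This follows from convexity and downward-closedness of $K^2$ by duality. If it failed, you would get $u\in[0,1]^d$ with $\sup_{\theta\in K}\sum_i u_i\theta_i^2\le c'\sqrt{k}\,t$ and $\|\mathbf{1}-u\|_2\le c'\sqrt{k}$. The coordinate projection onto $\{i:u_i<1/2\}$ then uses at most $k-1$ coordinates. Its squared residual is at most $2c'\sqrt{k}\,t<\sqrt{k}\,t$, contradicting the width bound. Your remark about a linear program over the weights $\theta_i^2$ points this way, but it must target this $\ell_2$-versus-$\ell_4$ statement, not an inscribed $k$-cube. With $t=\epsilon\sigma^2/\sqrt N$ the same lemma handles $k_2^\star$.

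\textbf{Your middle-term construction is oblivious, and that cannot reach $\epsilon\sqrt{k_2^\star/N}$ in general.} Given $\mu$, your samples are i.i.d.\ from a fixed mixture $(1-\epsilon)\mathcal{N}(\mu,\sigma^2 I)+\epsilon Q_\mu$.
\begin{itemize}
\item The term exceeds $\epsilon^2$ only when $\rho\gtrsim\epsilon\sigma$.
\item In that regime, cancelling the mean with an $\epsilon$-fraction places $Q_\mu$ at distance $\asymp\rho/\epsilon\gtrsim\sigma$.
\item Full second-moment matching needs $\rho^2\le\epsilon^2\sigma^2/(1-\epsilon)$, so it is impossible there.
\item Your scaling tracks only the covariance-level term $N\langle\mu,\mu'\rangle^2/(\epsilon^2\sigma^4)$. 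The higher-order and tail contributions to $\chi^2$ are not controlled.
\end{itemize}
More decisively, the unconstrained robust mean-testing results the paper cites \cite{Narayanan2022PrivateHH,10.5555/3495724.3496571,10353143} separate oblivious from adaptive contamination. The $\epsilon\sqrt{d/N}$ term is exactly the adaptive one. For example, with $k=\sqrt N$ and $\epsilon=N^{-1/3}$ the middle term is $N^{-7/12}$ and dominates the max. Yet under oblivious contamination detection is possible at $\rho^2$ of order $N^{-2/3}$ up to logarithms. So no i.i.d.-mixture pair can have small total variation at that scale.

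You need an adversary that acts on the realized sample, with indistinguishability argued for the joint $N$-sample laws as in the adaptive lower bounds. Combine this with the QCO sign prior above at level $t=\epsilon\sigma^2/\sqrt N$.

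Your $\epsilon^2$ coupling argument is fine. It needs only $\sup_{\theta\in K}\|\theta\|_2\gtrsim\epsilon\sigma$, which is implicit in the statement. It does not need balancedness, which QCO sets need not have.
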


Theorem \ref{theorem: main lower bound of last paper} demonstrates that the minimax lower bound is jointly determined by three components: the geometric properties of the set $K$, which are captured by the optimal dimensions $k _{1}^{\star }$ and $k _{2}^{\star }$; the characteristics of the original data, which are parameterized by $N$ and $\sigma $; and the corruption process, which is dictated by $k _{2}^{\star }$ and $\epsilon $. The contamination fraction $\epsilon $ intricately connects all three terms in the lower bound. As $\epsilon $ increases from $0$ to $c _{0}$ (assuming $c _{0}=\varTheta (1)$), each of the three terms in the max operator dominates the other two in turn, thereby forming three distinct phases of model behavior.

\subsubsection{Polynomial-Time Algorithm}\label{subsection: polynomial algorithm}

In this section, we construct a polynomial-time testing procedure centered around the surrogate matrix $A ^{\dagger }_{k}$. Specifically, the core intuition is to project the observations $\mathbf{Y}$ into some subspace $H _{P _{k}^{\star }}$ of $\mathbb{R}^{d}$ with intrinsic dimension $k \le d$  induced by $k$-dimensional optimal projection $P _{k}^{\star }$. We then conduct the deduction based on the subspace $H _{P _{k}^{\star }}$ as if we transfer the testing problem to $\mathbb{R}^{k}$ where there is no constraint. The resulting upper bound on $\left\lVert \mu \right\rVert_{2}^{}$ depends on two parts. (1), the price we have to pay for the distance between the original $\mu $ and the projected $P _{k}^{\star }\mu $, which is $\left\lVert \mu -P _{k}^{\star }\mu \right\rVert_{2}^{}$; (2), the fundamental requirement on $\left\lVert P _{k}^{\star }\mu \right\rVert_{2}^{}$ when we attempt to conduct the testing in $H _{P _{k}^{\star }}^{}$. The best upper bound possible is achieved by balancing the two parts above, and this is one of the motivations of the optimal dimension and projection. This approach depending on the balance of the two parts already appears in the recent work by \cite{li2026robustsignaldetectionquadratically}. However, in this work, as mentioned previously, the fundamental difference is that we do not assume the knowledge of the Kolmogorov widths, and consequently the exact optimal dimensions and projections are unknown, neither. We therefore replace the exact Kolmogorov $k$-width with the solution to its relaxed version in \eqref{eq: SDP problem}, and also replace the exact optimal dimensions $k _{1}^{\star },k _{2}^{\star }$ and projections $P _{1}^{\star },P _{2}^{\star }$ with $\tilde{k}_{1}^{\star },\tilde{k} _{2}^{\star }$ defined in Definition \ref{def_approximate_optimal_dimensions} and $A _{1}^{\dagger }:=\left(\mathbf{I}_{d}-X _{\tilde{k}_{1}^{\star }}^{\dagger }\right)^{\frac{1}{2}},A _{2}^{\dagger }:=\left(\mathbf{I}_{d}-X _{\tilde{k}_{2}^{\star }}^{\dagger }\right)^{\frac{1}{2}}$ where $X _{\tilde{k}_{1}^{\star }}^{\dagger },X _{\tilde{k}_{2}^{\star }}^{\dagger }$ are the outputs of Algorithm \ref{algorithm: ellipsoid method for the sdp} when setting $k=\tilde{k}_{1}^{\star }$ and $k=\tilde{k}_{2}^{\star }$, respectively. Another issue derived from such approximate projections is that the covariance matrix of the projected samples is no longer identity. (When we use the exact projections, the covariance matrix can be thought as the identity as we might apply some known appropriate rotation to the projected samples.) We are already well prepared for this scenario, as is what Theorem \ref{theorem: simple main results 1} serves.

We also note here that while prior literature has explored robust testing and estimation for the unconstrained version of \eqref{eq: testing problem}, these classical procedures often suffer from restrictive assumptions --- requiring either a vanishing contamination fraction ($\epsilon \rightarrow 0$) or suboptimal conditions scaling with $N$ and $d$. Truly general and computationally tractable algorithms have only emerged recently. Notably, \cite{doi:10.1137/17M1126680} pioneered the first efficient robust estimation framework via algorithmic ``filtering'' in the Gaussian sequence model. For the robust testing counterpart, analogous filtering techniques were first successfully adapted by \cite{Narayanan2022PrivateHH, 10353143}, and later extended by \cite{li2026robustsignaldetectionquadratically}. This algorithmic framework to be leveraged is the basis of robust testing in the unconstrained setting (in $H _{A _{1}^{\dagger }}$ and $H _{A _{2}^{\dagger }}$). Below we review such framework. We start with the concept of \hyperref[def_omega_regularity_init]{$\omega $-regularity}, where a suit of weights is defined to adaptively ``filter'' the observations $\mathbf{Y}$.

\begin{definition}[$\omega $-regularity]\label{def_omega_regularity_init}
    Given a weight vector $\omega =(\omega _{1},\dots,\omega _{N})^{\top } $ and an integer $1 \le k \le d$, $\mathbf{Y}$ is said to be $(\epsilon ,\beta _{1},\beta _{2})$-regular if for all subsets $S \subset [N]$ with $\left|S\right|\le \epsilon N$, we have the following properties:\\ 
    \begin{tabular}{rl}
        (i), & $\left|\sum\limits_{i \in S}^{}\left\lVert Y _{i}\right\rVert_{2}^{2}-\left|S\right|k\right|\le c \beta _{1}$,\\ 
        (ii), & $\left|\left\lVert \sum\limits_{i \in S}^{}\sqrt{\omega }_{i} Y _{i}\right\rVert_{2}^{2}-\left\lVert \omega _{S}\right\rVert_{1}^{}k\right|\le c \beta _{2}$, and\\ 
        (iii), & $\left|\left\langle \sum\limits_{i \in S}^{}\sqrt{\omega }_{i}Y _{i}, \sum\limits_{j \in [N]}^{}\sqrt{\omega }_{j}Y _{j} \right\rangle-\left\lVert \omega _{S}\right\rVert_{1}^{} k\right|\le c \sqrt{N}\beta _{1}$,
    \end{tabular}

    where $\omega _{S} \in \mathbb{R}^{N}$ is the restriction of $\omega $ on the set $S$.
\end{definition}

Before proceeding, we note a slight abuse of notation: hereafter, $Y _{i}$ and $\mu $ should be implicitly understood as their approximately projected counterparts, $A _{k}^{\dagger }Y _{i}$ and $A _{k}^{\dagger }\mu $. Crucially, because the surrogate matrices $X _{k}^{\dagger }$ and $A _{k}^{\dagger }$ are constructed independently of the observation sets $\tilde{\mathbf{Y}}$ and $\mathbf{Y}$, the mutual independence among the projected observations $A _{k}^{\dagger }\tilde{Y}_{i}$ and $A _{k}^{\dagger }\tilde{Y}_{j}$ ($i \neq j$) is preserved. To avoid notational clutter, we drop the explicit projection operator $A _{k}^{\dagger }$ and simply write $\tilde{\mathbf{Y}}, \mathbf{Y}, Y _{i}$, and $\mu $ in the remaining text. We also assume $\sigma =1$ in this section for the conciseness of the notation without loss of generality. When $\sigma \neq 1$, we only need to scale all the results by $\sigma $.

As a baseline, consider the uncontaminated setting where $Y _{i}=\tilde{Y}_{i}$ for all $i \in [N]$. Under this clean-data regime, we naturally adopt the uniform weighting scheme $\omega =\mathbf{1}_{[N]}$ across the entire dataset. By standard concentration inequalities, it can be shown that this uncorrupted empirical distribution inherently satisfies the $(\epsilon ,\beta _{1},\beta _{2})$-regularity condition, with $\beta _{1}$ and $\beta _{2}$ explicitly given by:
\begin{equation}\label{eq: specified beta_1 and beta_2}
    \begin{aligned}
        &\beta _{1}=\epsilon N \left(\sqrt{k}+\sqrt{N}\left\lVert \mu \right\rVert_{2}^{}\right)\sqrt{\ln \left(\frac{N}{\alpha }\right)}+\epsilon N \ln \left(\frac{N}{\alpha }\right)+\epsilon N \sqrt{N}\left\lVert \mu \right\rVert_{2}^{2},\\
        &\beta _{2}=\epsilon N \sqrt{\epsilon Nk \ln \left(\frac{1}{\epsilon }\right)}+(\epsilon N)^{2}\ln \left(\frac{1}{\epsilon }\right)+\left\lVert \mu \right\rVert_{2}^{}(\epsilon N)^{2}\sqrt{\ln \left(\frac{1}{\epsilon }\right)}+\left\lVert \mu \right\rVert_{2}^{2}(\epsilon N)^{2}.
    \end{aligned}
\end{equation}

Consequently, the parameters $\beta _{1}$ and $\beta _{2}$ naturally motivate the formulation of a $\chi ^{2}$-type test statistic. Nevertheless, such deterministic regularity is invariably violated in the contaminated setting by the malicious adversary $\mathcal{C}$. To actively enforce the required $(\epsilon ,\beta _{1},\beta _{2})$-regularity condition on the empirical distribution, we propose the following three-step algorithmic framework.

\begin{algorithm*}[htbp]
    \caption{Pre-filtering.}\label{algorithm: prefiltering}
    Set $\gamma _{1}=c\left[\sqrt{k\ln \left(\frac{N}{\alpha }\right)}+\ln \left(\frac{N}{\alpha }\right)\right]$, $\texttt{count}=0,\texttt{i}=0$

    \While {$\texttt{i} < N$}{
        \If {$\left|\left\lVert Y _{i}\right\rVert_{2}^{2}-k\right|> \gamma _{1}$}{

            $\texttt{count}=\texttt{count}+1$

            \If {$\texttt{count}>\epsilon N$}{
                \Return \texttt{None}
            } 
            Delete $Y _{i}$ from $\mathbf{Y}$      
        }
        $\texttt{i}=\texttt{i}+1$
    }

    \Return $\mathbf{Y}$
\end{algorithm*}

Algorithm \ref{algorithm: prefiltering} is designed to explicitly enforce the first condition of \hyperref[def_omega_regularity_init]{$\omega $-regularity}. The threshold parameter $\gamma _{1}$ is carefully chosen based on the (sub)-Gaussian concentration of the uncorrupted data. Specifically, for clean samples, the concentration inequality $\left|\left\lVert \tilde{Y} _{i}\right\rVert_{2}^{2}-k\right|\le \gamma _{1}$ holds across all $i=1,2,\dots,N$ with probability at least $1-\alpha $. Because the adversarial contamination budget is strictly limited to $\epsilon $, observing strictly more than $\epsilon N$ violations of this bound provides sufficient statistical evidence to confidently reject the null hypothesis and conclude $H _{1}$. We formalize this pre-filtering logic in the following lemma.

\begin{lemma}[]\label{lemma: guarantee of omega regularity 1}
    After the execution of Algorithm \ref{algorithm: prefiltering}, we have for any $i \in [N]$,
    \begin{equation*}
        \left|\left\lVert Y _{i}\right\rVert_{2}^{2}-k ^{\star }\right| \le \gamma _{1}.
    \end{equation*}
    Moreover, under $H _{0}$, the algorithm will terminate without rejection with probability at least $1-\alpha $.
\end{lemma}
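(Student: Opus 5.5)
The lemma has two parts, and I would treat them separately. The first part is deterministic. Algorithm \ref{algorithm: prefiltering} deletes every $Y_i$ with $\left|\left\lVert Y_i\right\rVert_2^2-k\right|>\gamma_1$. If more than $\epsilon N$ such indices appear, it returns \texttt{None}, which is a rejection. So whenever it returns a dataset, every surviving $Y_i$ satisfies $\left|\left\lVert Y_i\right\rVert_2^2-k\right|\le\gamma_1$ by construction. I read $k^\star$ in the statement as the $k$ used in the algorithm. Strictly, for approximate projections the centering should be $\text{tr}(\mathbf{\Sigma})=\text{tr}(A_k^{\dagger 2})=\text{tr}(\mathbf{I}_d-X_k^\dagger)=k$. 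This equals $k$ exactly because of the trace constraint in \eqref{eq: SDP problem}, so the two agree.

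The second part is probabilistic. Under $H_0$ each clean projected sample satisfies $\tilde Y_i=A_k^\dagger\xi_i$, with covariance $\mathbf{\Sigma}=\mathbf{I}_d-X_k^\dagger$. This matrix satisfies $\mathbf{0}\preceq\mathbf{\Sigma}\preceq\mathbf{I}_d$ and $\text{tr}(\mathbf{\Sigma})=k$. It is also independent of the data, because $X_k^\dagger$ is computed from $K$ alone. I would apply the Hanson--Wright inequality to $\xi_i^\top\mathbf{\Sigma}\xi_i$, as in the proof of Theorem \ref{theorem: simple main results 1}:
\begin{equation*}
\mathbb{P}\left(\left|\xi_i^\top\mathbf{\Sigma}\xi_i-k\right|>t\right)\le 2\exp\left(-c\min\left\{\frac{t^2}{\left\lVert\mathbf{\Sigma}\right\rVert_F^2},\frac{t}{\left\lVert\mathbf{\Sigma}\right\rVert_{op}}\right\}\right).
\end{equation*}
Here $\left\lVert\mathbf{\Sigma}\right\rVert_{op}\le1$ and $\left\lVert\mathbf{\Sigma}\right\rVert_F^2\le\text{tr}(\mathbf{\Sigma})\left\lVert\mathbf{\Sigma}\right\rVert_{op}\le k$. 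Taking $t=\gamma_1=c\left[\sqrt{k\ln(N/\alpha)}+\ln(N/\alpha)\right]$ with $c$ large makes each probability at most $\alpha/N$. A union bound over $i\in[N]$ then shows that, with probability at least $1-\alpha$, every clean sample satisfies the bound.

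On this event the adversary changes at most $\lfloor\epsilon N\rfloor$ samples. Only the changed samples can violate the threshold, so \texttt{count} never exceeds $\epsilon N$ and the algorithm does not reject. This conclusion holds uniformly over $\mathcal{C}$, since the event depends only on $\tilde{\mathbf{Y}}$.

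The only delicate point is obtaining the Frobenius and operator norm bounds for the non-identity covariance produced by the approximate projection. The constraints $\mathbf{0}\preceq X\preceq\mathbf{I}_d$ and $\text{tr}(X)=d-k$ supply exactly these bounds. If the algorithm's output $X_k^\dagger$ is only approximately feasible, I would absorb the small constraint slack into the constant $c$.
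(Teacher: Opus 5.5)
Your proposal is correct and follows essentially the argument the paper sketches (it defers the details to Appendix C.6 of \cite{li2026robustsignaldetectionquadratically}). You apply Hanson--Wright to $\xi_i^\top(\mathbf{I}_d-X_k^\dagger)\xi_i$, using $\left\lVert\mathbf{I}_d-X_k^\dagger\right\rVert_F^2\le k$ and $\left\lVert\mathbf{I}_d-X_k^\dagger\right\rVert_2\le 1$ from the SDP constraints as in the proof of Theorem \ref{theorem: simple main results 1}, then take a union bound over $i\in[N]$ to reach $\gamma_1$, and finally note that only the at most $\lfloor\epsilon N\rfloor$ corrupted samples can exceed the threshold. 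Your caveat about approximate feasibility is unnecessary, since Algorithm \ref{algorithm: ellipsoid method for the sdp} records $X_k^\dagger$ only at feasible iterates and its projected updates never leave the affine set defined by the trace and symmetry constraints.
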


For the second condition, we separate the classical scenario when $k<N$ and the high-dimensional scenario when $k \ge N$. In the following text, let $D(\omega )=\text{diag}\{\omega \}\in \mathbb{R}^{N \times N}$.

\textbf{Classical scenario ($k < N$)}: select $\gamma _{2}$ as:
\begin{equation}\label{eq: definition of gamma_2}
    \gamma _{2}:=c \left[\sqrt{Nk}+\sqrt{N\ln \left(\frac{1}{\alpha }\right)}+\ln \left(\frac{1}{\alpha }\right)+\epsilon N \ln \left(\frac{1}{\epsilon }\right)\right],
\end{equation}
where $c$ is a sufficiently large but universal constant. Denote the value of $\omega $ in the $t$-th iteration as $\omega ^{(t)}$. Define $\tau _{i}=\left\langle v, Y _{i} \right\rangle ^{2}\mathbf{1}_{\left\{\omega _{i}>0\right\}}$, where $v$ is the unit singular vector associated with $\lambda =\left\lVert \mathbf{Y} ^{\top } D(\omega )\mathbf{Y}-N \mathbf{I}_{k}\right\rVert_{2}^{}$. The update policy for $\omega $ is as
\begin{equation}\label{eq: update of omega}
    \omega _{i}^{(t+1)}=\left\{
    \begin{tabular}{ll}
        $\left(1-\frac{\tau _{i}^{(t)}}{\tau _{1}^{(t)}}\right)\omega _{i}^{(t)}$ & if $i \le I$,\\
        $\omega _{i}^{(t)}$& if $i>I$.
    \end{tabular}
    \right.
\end{equation}
See the detailed filtering process for the second condition of \hyperref[def_omega_regularity_init]{$\omega $-regularity} under classical scenario in Algorithm \ref{algorithm n > k}.

\begin{algorithm}[htbp]
    \caption{Sample filtering when $N>k$.}\label{algorithm n > k}
    Set $\gamma _{2}$ as \eqref{eq: definition of gamma_2}, and $\lambda =\left\lVert \mathbf{Y} ^{\top } D(\omega )\mathbf{Y}-N \mathbf{I}_{k}\right\rVert_{2}^{}$. ($\omega $ is initialized as $\mathbf{1}$.)

    \While {$\lambda \ge \gamma _{2}$}{
        Set $v$ to be the unit singular vector associated with $\lambda $

        Compute $\tau _{i}=\left\langle v, Y _{i} \right\rangle ^{2}\mathbf{1}_{\left\{\omega _{i}>0\right\}}$ for $1 \le i \le N$

        Sort $\left\{\tau _{i}\right\}_{i=1}^{N}$ according to the decreasing order

        Set $I$ be the smallest index such that $\sum\limits_{i=1}^{I}\omega _{i}\ge 2\epsilon N$

        Update $w$ according to \eqref{eq: update of omega}

        \If {$\left\lVert \omega \right\rVert_{1}^{}<N(1-2\epsilon )$} {

            \Return \texttt{None}
        }

        Set $\lambda =\left\lVert \mathbf{Y} ^{\top } D(\omega )\mathbf{Y}-N \mathbf{I}_{k}\right\rVert_{2}^{}$
    }

    \Return $\omega $
\end{algorithm}

Algorithm \ref{algorithm n > k} guarantees the second condition of \hyperref[def_omega_regularity_init]{$\omega $-regularity} under the classical scenario (i.e., $N>k$) within polynomial-time complexity. In particular, we have the following lemma.

\begin{lemma}[]\label{lemma: guarantee of omega regularity 2 1}
    The following facts hold for Algorithm \ref{algorithm n > k}:\\
    \hspace*{0.5em}(i), it terminates within finite rounds,\\ 
    \hspace*{0.5em}(ii), under $H _{0}$, it does not output rejection (\texttt{None}) with probability greater than $1-\alpha $;\\
    \hspace*{0.5em}(iii), once finished without rejection, for any $S \subset [N]$ with $\left|S\right|\le \epsilon N$, we have $\left\lVert \sum\limits_{i \in S}^{}\sqrt{\omega _{i}}Y _{i}\right\rVert_{2}^{2}\le 2 \gamma _{2}\epsilon N$ with probability higher than $1-\alpha $.
\end{lemma}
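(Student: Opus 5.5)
Claim (i) I will prove by a potential argument. At each pass of the while-loop, let $I$ be as in Algorithm \ref{algorithm n > k}, so that $\sum_{i\le I}\omega_i\ge 2\epsilon N$ after sorting. The update \eqref{eq: update of omega} sets the weight of the top-ranked index ($i=1$, where $\tau_1$ is maximal) to zero. Every other weight is non-increasing. The number of strictly positive weights therefore drops by at least one per round, so the loop runs at most $N$ times before either $\lambda<\gamma_2$ or $\|\omega\|_1<N(1-2\epsilon)$ triggers a return. Each round costs one top singular vector of a $k\times k$ matrix plus a sort, so the procedure is also polynomial.

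For (ii) I will use the standard filtering invariant. Under $H_0$ the clean samples are $\mathcal{N}(0,A^{\dagger 2})$ after projection. By the simple-case analysis, the trace $k$ replaces the dimension, as in Theorem \ref{theorem: simple main results 1}. I then invoke matrix concentration for the clean sample covariance, uniformly over all reweightings that remove at most $2\epsilon N$ mass. With probability $1-\alpha$ this gives $\|\sum_{i\notin C}\omega_i Y_iY_i\T - (\sum_{i\notin C}\omega_i)\Sigma\|_2\lesssim \sqrt{Nk}+\sqrt{N\ln(1/\alpha)}+\ln(1/\alpha)+\epsilon N\ln(1/\epsilon)$, which is exactly of order $\gamma_2$.

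On this event I will show the invariant $\sum_{i\in C}(1-\omega_i)\ge\sum_{i\notin C}(1-\omega_i)$: the corrupted points lose at least as much weight as the clean ones. Suppose $\lambda\ge\gamma_2$ with $c$ large. Since $v\T(\mathbf Y\T D(\omega)\mathbf Y-N\mathbf I)v$ is large, the clean contribution along $v$ is at most about $\gamma_2/2$ above its mean, so the excess must come from $C$. Hence $\sum_{i\in C,\, i\le I}\omega_i\tau_i \ge \sum_{i\notin C,\, i\le I}\omega_i\tau_i$. The reason is that the clean top-$2\epsilon N$ mass in direction $v$ is controlled by the same uniform bound, since it is a subset sum of size $\le 2\epsilon N$. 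The weight removed at index $i$ is $\omega_i\tau_i/\tau_1$, so the invariant is preserved. Because $|C|\le\epsilon N$, the total removed weight is then at most $2\epsilon N$, so the rejection condition $\|\omega\|_1<N(1-2\epsilon)$ never fires. I expect this invariant step to be the main obstacle. The difficulty is that the comparison must hold for the data-dependent direction $v$ and the adaptive set $\{i\le I\}$, which requires the uniform-over-subsets concentration (a union bound over $\binom{N}{2\epsilon N}$ subsets giving the $\epsilon N\ln(1/\epsilon)$ term). There is also a non-identity covariance, whose operator norm is at most $1$, to handle.

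For (iii), when the loop exits without rejection we have $\|\mathbf Y\T D(\omega)\mathbf Y-N\mathbf I\|_2<\gamma_2$. Take $S$ with $|S|\le\epsilon N$ and the unit vector $u$ in the direction of $\sum_{i\in S}\sqrt{\omega_i}Y_i$. By Cauchy--Schwarz, $\|\sum_{i\in S}\sqrt{\omega_i}Y_i\|_2^2\le |S|\sum_{i\in S}\omega_i\langle u,Y_i\rangle^2$. Split the last sum into clean and corrupted parts. The clean part is bounded by the uniform subset concentration, giving $\lesssim k+\gamma_2$ in operator norm after dividing appropriately. The corrupted part is bounded by $u\T\mathbf Y\T D(\omega)\mathbf Yu$ minus the clean contribution from $[N]\setminus S$. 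That clean contribution is at least $(\|\omega\|_1-\epsilon N)\,u\T\Sigma u-\gamma_2/2$. Combining these gives $\sum_{i\in S}\omega_i\langle u,Y_i\rangle^2\le 2\gamma_2$ up to constants absorbed into $c$, and hence the bound $2\gamma_2\epsilon N$, holding on the same probability-$(1-\alpha)$ event.
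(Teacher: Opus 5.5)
Your part (i) is fine. For (ii)--(iii) you use the same template the paper relies on: it defers to the identity-covariance filtering analysis of its predecessor, and adds only Lemma \ref{lemma: bounds on the l2-operator norm of X^TX and XX^T}(i) and Lemma \ref{lemma: bound on the max eigenvalue of the covariance matrix} to handle $\Sigma=(A_k^\dagger)^2=\mathbf{I}_d-X_k^\dagger$. However, there is a genuine gap at the step ``since $v\T(\mathbf{Y}\T D(\omega)\mathbf{Y}-N\mathbf{I})v$ is large, the excess must come from $C$''. Your concentration is centred at the clean mean $(\sum_{i\notin C}\omega_i)\Sigma$, but the filter is centred at $N\mathbf{I}$. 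Moreover, $\lambda$ is an operator norm, so the extremal direction may carry a large \emph{negative} eigenvalue. For $\Sigma\neq\mathbf{I}$ this is not a corner case. Since $\text{tr}(\Sigma)=k$, we have $\lambda_{\min}(\Sigma)\le k/d$. So along a bottom eigenvector $v$ of $\Sigma$, even with no corruption, $v\T(\mathbf{Y}\T D(\omega)\mathbf{Y}-N\mathbf{I})v\le -N(1-k/d)+O(\gamma_2)$ at every round, because shrinking weights only lowers it. This is far below $-\gamma_2$ once, say, $k\le d/2$, $N\gg k$ and $\epsilon$ is small. In such directions the scores $\tau_i$ come from clean points, or all vanish and the update becomes $0/0$. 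The invariant $\sum_{i\in C}(1-\omega_i)\ge\sum_{i\notin C}(1-\omega_i)$ then fails, and the loop can exit only through the rejection branch. So (ii) cannot be derived for the filter centred at $N\mathbf{I}$; your remark $\|\Sigma\|_2\le 1$ controls only the upper side. The same mismatch breaks your (iii). Combining $\|\mathbf{Y}\T D(\omega)\mathbf{Y}-N\mathbf{I}\|_2<\gamma_2$ with the clean lower bound $(\|\omega\|_1-\epsilon N)u\T\Sigma u-\gamma_2/2$ bounds the corrupted part only by $N(1-u\T\Sigma u)+O(\gamma_2)$, not by $O(\gamma_2)$.

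\textbf{Repair.} Recentre the filter at the known clean covariance, $\lambda=\|\mathbf{Y}\T D(\omega)\mathbf{Y}-N\Sigma\|_2$. This is computable because $X_k^\dagger$ does not depend on the data, and it is exactly the quantity controlled by Lemma \ref{lemma: bounds on the l2-operator norm of X^TX and XX^T}(i). You then still have to rule out the negative side explicitly, which you omit even when $\Sigma=\mathbf{I}$. On the invariant the clean weight is at least $N(1-2\epsilon)$, so $v\T(\mathbf{Y}\T D(\omega)\mathbf{Y}-N\Sigma)v\ge -2\epsilon N-\gamma_2/4>-\gamma_2$, thanks to the $c\,\epsilon N\ln(1/\epsilon)$ term in $\gamma_2$. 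Hence $\lambda\ge\gamma_2$ means a positive excess, and the corrupted points carry at least $3\gamma_2/4$ of it.

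\textbf{Second omission.} In the invariant step, bounding the clean mass among the top $I$ indices is not enough; you also need the corrupted mass below the cut-off to be small. At least $\epsilon N$ of the weight among the top $I$ indices is clean, so Lemma \ref{lemma: bound on the max eigenvalue of the covariance matrix} gives $\epsilon N\tau_I\le\sum_{i\notin C,\,i\le I}\omega_i\tau_i\le\gamma_2/4$. Hence $\sum_{i\in C,\,i>I}\omega_i\tau_i\le\epsilon N\tau_I\le\gamma_2/4$, and therefore $\sum_{i\in C,\,i\le I}\omega_i\tau_i\ge\gamma_2/2\ge\sum_{i\notin C,\,i\le I}\omega_i\tau_i$. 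With the recentring, your Cauchy--Schwarz argument for (iii) bounds the corrupted contribution by $\gamma_2+O(\epsilon N)$ plus lower-order terms. This yields the claimed $2\gamma_2\epsilon N$ for $c$ large.
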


\textbf{High-dimensional scenario ($k \geq N$)}: select $\gamma _{3}$ as:
\begin{equation}\label{eq: definition of gamma_3}
    \gamma _{3}=c \left(\sqrt{Nk}+\sqrt{k \ln \left(\frac{1}{\alpha }\right)}+\ln \left(\frac{1}{\alpha }\right)+\epsilon N \ln \left(\frac{1}{\epsilon }\right)\right),
\end{equation}
where again $c$ is some sufficiently large but universal constant. Define $\tau _{i}=\frac{v _{i}^{2}}{\omega _{i}}\mathbf{1}_{\left\{\omega _{i}>0\right\}}$ where $v$ is the unit singular vector associated with $\lambda =\left\lVert \sqrt{D(\omega )}\mathbf{Y}\mathbf{Y}^{\top }\sqrt{D(\omega )}-kD(\omega )\right\rVert_{2}^{}$. The update policy for $\omega $ in the $t$-th iteration is as:
\begin{equation}\label{eq: update of omega 2}
    \omega _{i}^{(t+1)}=\left(1-\frac{\tau _{i}}{\max\limits _{i}\tau _{i}}\right)\omega _{i}^{(t)}.
\end{equation}

See the detailed filtering process for the second condition of \hyperref[def_omega_regularity_init]{$\omega $-regularity} under high-dimensional scenario in Algorithm \ref{algorithm n <= k}.

\begin{algorithm}[htbp]
    \caption{Sample filtering when $N \le k$.}\label{algorithm n <= k}
    Set $\gamma _{3}$ as \eqref{eq: definition of gamma_3}, and $\lambda =\left\lVert \sqrt{D(\omega)}\mathbf{Y}\mathbf{Y} ^{\top } \sqrt{D(\omega)}-kD(\omega )\right\rVert_{2}^{}$. ($\omega $ is initialized as $\mathbf{1}$.)

    \While {$\lambda \ge \gamma _{3}$}{
        Set $v$ to be the unit singular vector associated with $\lambda $

        Compute $\tau _{i}=\frac{v _{i}^{2}}{\omega _{i}}\mathbf{1}_{\left\{w _{i}>0\right\}}$

        Update $w$ according to \eqref{eq: update of omega 2}

        \If {$\left\lVert \omega \right\rVert_{1}^{}<N(1-6\epsilon )$} {

            \Return \texttt{None}
        }

        Set $\lambda =\left\lVert \sqrt{D(\omega)}\mathbf{Y}\mathbf{Y} ^{\top } \sqrt{D(\omega)}-kD(\omega )\right\rVert_{2}^{}$
    }

    \Return $\omega $
\end{algorithm}

similar to Algorithm \ref{algorithm n > k}, the following lemma establishes that Algorithm \ref{algorithm n <= k} indeed enforces the second condition of \hyperref[def_omega_regularity_init]{$\omega $-regularity} under the high-dimensional scenario (i.e., $N \le k$).

\begin{lemma}[]\label{lemma: guarantee of omega regularity 2 2}
    The following facts hold for Algorithm \ref{algorithm n <= k}:\\
    \hspace*{0.5em}(i), it terminates within finite rounds;\\ 
    \hspace*{0.5em}(ii), under $H _{0}$, it does not output rejection (\texttt{None}) with probability greater than $1-\alpha $;\\
    \hspace*{0.5em}(iii), once finished without rejection, for any $S \subset [N]$ with $\left|S\right|\le \epsilon N$, we have 
    \begin{equation*}
        \left|\left\lVert \sum\limits_{i \in S}^{}\sqrt{\omega _{i}}Y _{i}\right\rVert_{2}^{2}-k\left\lVert \omega _{S}\right\rVert_{1}^{} \right|\le c \epsilon N \gamma _{3}
    \end{equation*}
    with probability greater than $1-\alpha $.
\end{lemma}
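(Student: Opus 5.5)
The plan is to view the algorithm through the $N\times N$ matrix $M(\omega):=\sqrt{D(\omega)}\mathbf{Y}\mathbf{Y}^{\top}\sqrt{D(\omega)}-kD(\omega)$. Its entries are $M_{ij}=\sqrt{\omega_i\omega_j}\,\langle Y_i,Y_j\rangle - k\,\omega_i\mathbf{1}_{\{i=j\}}$. Each of (i)–(iii) then becomes a statement about the operator norm of $M$ or its quadratic forms.

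\textbf{Item (i).} In each iteration the index $i^\ast$ attaining $\max_i\tau_i$ receives $\omega_{i^\ast}^{(t+1)}=0$ by \eqref{eq: update of omega 2}, and all other weights stay nonnegative and nonincreasing. Once $\omega_{i^\ast}=0$, that coordinate has $\tau_{i^\ast}=0$ forever, because of the indicator $\mathbf{1}_{\{\omega_i>0\}}$. So the support of $\omega$ loses at least one index per round. The loop therefore stops within $N$ rounds, either by returning $\omega$ or \texttt{None}.

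\textbf{Item (iii).} I would do this step before (ii), since it is deterministic given termination. Fix $S$ with $|S|\le\epsilon N$ and let $u=\mathbf{1}_S\in\mathbb{R}^N$. Then
\[
u^\top M(\omega)u=\Bigl\lVert\sum_{i\in S}\sqrt{\omega_i}Y_i\Bigr\rVert_2^2-k\lVert\omega_S\rVert_1 .
\]
On termination $\lVert M(\omega)\rVert_2<\gamma_3$, so $|u^\top M u|\le\gamma_3\lVert u\rVert_2^2\le\gamma_3\epsilon N$. This already gives the claimed bound with $c=1$. The probability qualifier is only needed to pass, via (ii), to the event on which the algorithm does not reject.

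\textbf{Item (ii).} Work under $H_0$. Let $G=[N]\setminus C$ be the clean indices; on $G$ the projected samples are i.i.d.\ $\mathcal{N}(0,\Sigma)$ with $\operatorname{tr}\Sigma=k$ and $\Sigma\preceq\mathbf{I}$, as in Theorem \ref{theorem: simple main results 1}.

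\emph{Step 1 (clean concentration).} Show that with probability at least $1-\alpha$, for every weight vector $\omega\in[0,1]^N$,
\[
\bigl\lVert M_{GG}(\omega)\bigr\rVert_2\le\gamma_3/4 .
\]
Since $M_{GG}(\omega)=\sqrt{D_G}\,(\mathbf{Y}_G\mathbf{Y}_G^\top-k\mathbf{I})\sqrt{D_G}$, it suffices to bound $\lVert\mathbf{Y}_G\mathbf{Y}_G^\top-k\mathbf{I}\rVert_2$. I would bound it by $\sqrt{Nk}+\sqrt{k\ln(1/\alpha)}+\ln(1/\alpha)$, up to a constant, using Hanson–Wright plus an $\epsilon$-net over the unit sphere of $\mathbb{R}^{|G|}$. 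Note that $\operatorname{tr}\Sigma^2\le k$ and $\lVert\Sigma\rVert\le1$, and that $N\le\sqrt{Nk}$ in this regime.

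\emph{Step 2 (weight-removal invariant).} Prove by induction that
\[
\sum_{i\in G}(1-\omega_i)\le\sum_{i\in C}(1-\omega_i)+O(\epsilon N),
\]
which gives $\lVert\omega\rVert_1\ge N(1-6\epsilon)$ and hence no rejection. The weight removed from index $i$ in one round is $\omega_i\tau_i/\max\tau=v_i^2/\max\tau$. It therefore suffices to show that whenever $\lambda\ge\gamma_3$,
\[
\sum_{i\in G}v_i^2\le\sum_{i\in C}v_i^2 .
\]

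\emph{Step 3 (the key inequality; main obstacle).} Write $v^\top Mv=\pm\lambda$ and split $M$ into its $GG$, $GC$ and $CC$ blocks.

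If the extreme eigenvalue is negative, then
\[
v^\top Mv\ge -k\sum_{i}\omega_i v_i^2\ \text{restricted appropriately}\ +\ v_G^\top M_{GG}v_G ,
\]
using that $\sqrt{D}\mathbf{Y}\mathbf{Y}^\top\sqrt{D}\succeq0$. Here I need to control $k\sum_{C}\omega_iv_i^2$ through the pre-filtering bound of Lemma \ref{lemma: guarantee of omega regularity 1}. This is the delicate part, and I would first try to show this case cannot occur with $\lambda\ge\gamma_3$ once the threshold is chosen large enough.

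If the extreme eigenvalue is positive, I would use Cauchy–Schwarz on the cross term $2v_G^\top M_{GC}v_C$. Then
\[
\gamma_3\le\lambda\le \tfrac{\gamma_3}{4}\lVert v_G\rVert^2+2\sqrt{\lambda\cdot\tfrac{\gamma_3}{4}}\,\lVert v_G\rVert\lVert v_C\rVert+v_C^\top M_{CC}v_C .
\]
This forces most of the $\ell_2$-mass of $v$ onto $C$, yielding $\lVert v_C\rVert^2\ge\lVert v_G\rVert^2$.

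Making these constants close, including the $\epsilon N\ln(1/\epsilon)$ slack in $\gamma_3$, is where I expect the real work to lie.
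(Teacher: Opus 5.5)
Your items (i) and (iii) are correct. Your Step~1 is exactly the ingredient the paper itself supplies for this lemma: the Hanson--Wright plus $\tfrac14$-net bound on $\lVert\mathbf{X}\mathbf{X}^\top-k\mathbf{I}_n\rVert_2$ for $n<k$ in the appendix. The paper defers the filtering argument itself to its predecessor.

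\textbf{The gap is Step~3, on three counts.}

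First, the cross-term bound $|v_G^\top M_{GC}v_C|\le\sqrt{\lambda\gamma_3/4}\,\lVert v_G\rVert_2\lVert v_C\rVert_2$ would follow from Cauchy--Schwarz only if $M(\omega)$ were positive semidefinite. The PSD matrix is $M(\omega)+kD(\omega)$, so Cauchy--Schwarz brings in a factor $\sqrt{k}$. For $k\ge N$ one can have $k\gg\gamma_3\asymp\sqrt{Nk}$. Concretely, a corrupted row duplicating a clean one, $Y_c=Y_g$, survives pre-filtering and gives $M_{gc}=\lVert Y_g\rVert_2^2\approx k$ with $\lambda\approx k$. Your bound would cap the cross term at about $\sqrt{k\gamma_3}\ll k$. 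Even granting your display, it only yields $\lVert v_C\rVert_2^2\ge 9/25$, not $\ge\tfrac12$.

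Second, the target $\sum_{i\in G}v_i^2\le\sum_{i\in C}v_i^2$ is false. Let $u_G$ be the top eigenvector of $M_{GG}$ with eigenvalue $a>0$. Add one corrupted row with $\lVert Y_c\rVert_2^2=k$ and $\mathbf{Y}_GY_c=b\,u_G$, which is realizable when $k/N$ is large. The top eigenvector of $M$ then lies in $\mathrm{span}(u_G,e_c)$ and satisfies $v_c^2/\lVert v_G\rVert_2^2=b^2/\mu_+^2<1$, where $\mu_+=\tfrac12\bigl(a+\sqrt{a^2+4b^2}\bigr)$ can be made $\ge\gamma_3$.

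Third, the negative case cannot be excluded. Pre-filtering controls only diagonal entries. Take $m\ge2$ corrupted rows of squared norm $k$ with pairwise inner products $-k/(m-1)$. They give Rayleigh quotient $-k$ on $\mathbf{1}_C/\sqrt{m}$, which exceeds $\gamma_3$ in absolute value once $k/N$ is large.

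\textbf{The missing idea is to cancel the cross term instead of bounding it.} Pair the $G$-rows of $Mv=\mu v$ with $v_G$, pair the $C$-rows with $v_C$, and subtract:
\[
\mu\bigl(\lVert v_C\rVert_2^2-\lVert v_G\rVert_2^2\bigr)=v_C^\top M_{CC}v_C-v_G^\top M_{GG}v_G .
\]
Now use $\lVert M_{GG}\rVert_2\le\gamma_3/4$ from your Step~1, and $\lVert M_{CC}\rVert_2\le\lVert M\rVert_2=\lambda$ since $M_{CC}$ is a principal submatrix. For either sign $\mu=\pm\lambda$ this gives $(\lambda-\gamma_3/4)\lVert v_G\rVert_2^2\le2\lambda\lVert v_C\rVert_2^2$. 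Hence $\lVert v_G\rVert_2^2\le\tfrac83\lVert v_C\rVert_2^2$ whenever $\lambda\ge\gamma_3$.

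Combine this with your per-round identity: the weight removed from $i$ is $v_i^2/\max_j\tau_j$, and $v_i=0$ when $\omega_i=0$. The total clean weight removed is then at most $\tfrac83$ times the total corrupted weight removed, i.e.\ at most $\tfrac83\epsilon N$. So $\lVert\omega\rVert_1\ge N(1-\tfrac{11}{3}\epsilon)>N(1-6\epsilon)$ on the Step~1 event.

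This argument needs neither pre-filtering nor the $\epsilon N\ln(1/\epsilon)$ term. Only a constant ratio is available, and only a constant ratio is needed. That is what the looser $1-6\epsilon$ threshold accommodates, compared with $1-2\epsilon$ in the $N>k$ filter.
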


We note here that the techniques involving the interaction with the maximal eigenvalue and the corresponding eigenvector of either $\mathbf{Y}^{\top } D(\omega )\mathbf{Y}-N \mathbf{I}_{k}$ or $\sqrt{D(\omega )}\mathbf{Y}\mathbf{Y}^{\top } \sqrt{D(\omega )}-kD(\omega )$ in Algorithm \ref{algorithm n > k} and \ref{algorithm n <= k} first appeared in \cite{JMLR:v10:klivans09a} and was later adapted to the robust statistics by \cite{doi:10.1137/17M1126680,10.5555/3305381.3305485}.

Finally, the third condition of \hyperref[def_omega_regularity_init]{$\omega $-regularity} is guaranteed by the weight filtering algorithm, presented below. 

\begin{algorithm}[htbp]
    \caption{Weight filtering.}\label{algorithm: weight filtering}
    Compute $\tau _{i}=\left|\left\langle \sqrt{\omega _{i}}Y _{i}, \sum\limits_{j=1}^{N}\sqrt{\omega _{j}}Y _{j} \right\rangle-k\omega _{i}\right|\mathbf{1}_{\left\{\omega _{i}>0\right\}}, 1 \le i \le N$

    Sort $\tau _{i}$ by desreasing order and find the indices $\left\{i _{1},i _{2},\dots,i _{\epsilon N}\right\}$ corresponding to the first $\epsilon N$ maximal $\tau _{i}$

    Set $\omega _{i _{1}},\omega _{i _{2}},\dots,\omega _{i _{\epsilon N}}$ to zero

    \Return $\omega $
\end{algorithm}

The input weights $\omega $ in Algorithm \ref{algorithm: weight filtering} is the output of either Algorithm \ref{algorithm n > k} or Algorithm \ref{algorithm n <= k} according to the relationship between $N$ and $k$. Algorithm \ref{algorithm: weight filtering} enforces the third condition of \hyperref[def_omega_regularity_init]{$\omega $-regularity}, as established in the following lemma.

\begin{lemma}[]\label{lemma: guarantee of omega regularity 3}
    For the weights $\omega $ outputted by Algorithm \ref{algorithm: weight filtering} and any $S \subset [N]$ with $\left|S\right|\le \epsilon N$, we have 
    \begin{equation*}
        \left|\sum\limits_{i \in S}^{}\sum\limits_{j=1}^{N}\sqrt{\omega _{i}\omega _{j}}\left\langle Y _{i}, Y _{j} \right\rangle-k \left\lVert \omega _{S}\right\rVert_{1}^{}\right|\le c(\sqrt{N}\beta _{1}+\beta _{2}+\epsilon N \gamma ),
    \end{equation*}
    where $\beta _{1}$ and $\beta _{2}$ are defined in \eqref{eq: specified beta_1 and beta_2}, $\gamma $ is equal to either $\gamma _{2}$ or $\gamma _{3}$ accordingly.
\end{lemma}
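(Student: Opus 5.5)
The plan is to split the quantity into the contribution of the samples that survive the weight filtering and the contribution of the samples that are zeroed out, and then to control the surviving ones through the ranking property of Algorithm \ref{algorithm: weight filtering}. Write $\omega$ for the weights handed to Algorithm \ref{algorithm: weight filtering} by Algorithm \ref{algorithm n > k} or \ref{algorithm n <= k}, and $\omega'$ for its output. Put $Z_i:=\sqrt{\omega_i}Y_i$ and $U:=\sum_j Z_j$. Set
\begin{equation*}
\tau_i=\left|\langle Z_i,U\rangle-k\omega_i\right|.
\end{equation*}
Zeroing the top $\epsilon N$ indices changes the global vector to $U'=U-\sum_{i\in R}Z_i$, where $R$ is the removed set and $|R|\le \epsilon N$. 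For a fixed $S$ with $|S|\le\epsilon N$, only the indices in $S\setminus R$ contribute, so the target quantity is
\begin{equation*}
\sum_{i\in S\setminus R}\left(\langle Z_i,U\rangle-k\omega_i\right)-\sum_{i\in S\setminus R}\langle Z_i,\textstyle\sum_{l\in R}Z_l\rangle .
\end{equation*}

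For the second term I use Cauchy--Schwarz together with the subset bound from Lemma \ref{lemma: guarantee of omega regularity 2 1}(iii) or Lemma \ref{lemma: guarantee of omega regularity 2 2}(iii). Both $S\setminus R$ and $R$ have size at most $\epsilon N$, and each lemma controls $\|\sum_{i\in T}Z_i\|_2^2$ for every $|T|\le\epsilon N$. In the high-dimensional case this control is $k\|\omega_T\|_1+c\epsilon N\gamma_3$, and that is essentially where the $\beta_2$ and $\epsilon N\gamma$ terms come from. So the cross term is at most $c(\epsilon N\gamma+\epsilon Nk)$. The $\epsilon Nk$ piece has to be absorbed, and I expect to absorb it into $\beta_2$ after recentering, because the $k$-scale parts have opposite signs across the two subsets. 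At worst I would simply bound it by $\beta_2$ using the explicit form \eqref{eq: specified beta_1 and beta_2}.

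For the first term, the key fact is that every $i\notin R$ has $\tau_i\le\min_{l\in R}\tau_l$. Hence it suffices to bound $\sum_{i\in T}\tau_i$ for sets $T$ of uncorrupted indices, and then to handle the corrupted ones.

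For a clean index $i$, I decompose $\langle Z_i,U\rangle-k\omega_i$ into two parts. The first is $\langle Z_i,\sum_{j\text{ clean}}Z_j\rangle-k\omega_i$, which concentrates by Gaussianity; summed over $|T|\le\epsilon N$, it gives exactly the $\sqrt N\beta_1$ scale from the regularity computation that produced \eqref{eq: specified beta_1 and beta_2}. The second is $\langle Z_i,\sum_{j\in C}Z_j\rangle$, which is handled by Cauchy--Schwarz and the subset norm bound, as above. This gives $\sum_{i\in T\cap C^c}\tau_i\le c(\sqrt N\beta_1+\beta_2+\epsilon N\gamma)$.

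For corrupted indices in $S\setminus R$, there are at most $\epsilon N$ of them. The removed set $R$ has $\epsilon N$ elements, each with $\tau_l$ at least as large as these. If $R$ contains any clean index, then every surviving corrupted $\tau_i$ is bounded by that clean $\tau_l$, and the total is bounded by $\sum_{l\in R\setminus C}\tau_l$ plus a matching count. If $R$ is entirely corrupted, then $|R|=|C|$ and no corrupted index survives. In both cases the sum over surviving corrupted indices is at most the clean bound above.

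The main obstacle is this averaging step. The number of surviving corrupted indices must be matched against the number of removed clean indices, so that sums rather than maxima are compared. If the counts do not match exactly, I would instead use the crude bound $\epsilon N\max_{\text{clean}}\tau_i$ and check that it stays within $\sqrt N\beta_1$.
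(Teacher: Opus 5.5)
The paper gives no proof of this lemma; it defers to Appendix C.6 of the cited prior work. So this is an assessment of your argument on its own. Your decomposition through $U'=U-\sum_{l\in R}Z_l$ and the ranking step are fine. The counts also always match: $|R|=\epsilon N\ge |C|$ forces $|R\setminus C|\ge |C\setminus R|$. Hence $\sum_{i\in C\setminus R}\tau_i\le |C\setminus R|\min_{l\in R}\tau_l\le \sum_{l\in R\setminus C}\tau_l$, which is a clean-set sum, so no crude fallback is needed.

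\textbf{First gap: the clean bound ignores that the weights are data-dependent.} The weights in $\sum_{i\in T}\bigl(\langle Z_i,\sum_{j\in G}Z_j\rangle-k\omega_i\bigr)$, with $G=[N]\setminus C$, come out of Algorithm \ref{algorithm n > k} or \ref{algorithm n <= k}. Those algorithms run on the whole contaminated sample, which the adversary builds after seeing $\tilde{\mathbf{Y}}$. So this is not a Gaussian functional with fixed coefficients, and the computation behind \eqref{eq: specified beta_1 and beta_2}, which is for $\omega=\mathbf{1}$, does not transfer. You need a bound that holds simultaneously for every admissible $\omega$.

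One route is as follows. Write the quantity as $\bigl(\|\sum_{i\in T}Z_i\|_2^2-k\|\omega_T\|_1\bigr)+\langle \sum_{i\in T}Z_i,\sum_{j\in G\setminus T}Z_j\rangle$. The first piece is $O(\epsilon N\gamma)$ by part (iii) of Lemma \ref{lemma: guarantee of omega regularity 2 1} or \ref{lemma: guarantee of omega regularity 2 2}, using $k\lesssim\gamma_2$ when $k<N$. For the second piece, put $a=\sqrt{\omega_T}\in[0,1]^T$ and $b=\mathbf{1}-\sqrt{\omega_{G\setminus T}}$. The rejection rule of the spectral filter gives $\|b\|_1\le\sum_j(1-\omega_j)\le c\epsilon N$. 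The expression is bilinear in $(a,b)$ over a product of polytopes, so its extremes sit at $0/1$ vectors. This reduces it to $\langle\sum_{i\in T'}Y_i,\sum_{j\in [N]\setminus(C\cup T\cup R')}Y_j\rangle$ with $T'\subseteq T$ and $|R'|\le c\epsilon N$. For fixed index sets this is conditionally Gaussian. A union bound over the $(e/\epsilon)^{O(\epsilon N)}$ choices of $(C,T,T',R')$ (Lemma \ref{lemma: combination number}) then puts it at the $\sqrt N\beta_1$ scale. Also, since $\tau_i$ carries an absolute value, split $T$ by sign before invoking any subset bound.

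\textbf{Second gap: your cross terms fail in the high-dimensional case.} Cauchy--Schwarz with Lemma \ref{lemma: guarantee of omega regularity 2 2}(iii) only gives $|\langle\sum_{S\setminus R}Z_i,\sum_{l\in R}Z_l\rangle|\le\epsilon N(k+c\gamma_3)$. The $\epsilon Nk$ part cannot be absorbed. Under $H_0$, after dividing by $\epsilon N$, every term of $\sqrt N\beta_1+\beta_2+\epsilon N\gamma_3$ is $o(k)$ once $k\gg N\ln(N/\alpha)$. So your fallback ``bound it by $\beta_2$'' is false. Your treatment of $\langle Z_i,\sum_{j\in C}Z_j\rangle$ for clean $i$ has the same problem.

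What actually makes the $k$-parts cancel, as you suspected, is polarization. For disjoint $T_1,T_2$, $2\langle\sum_{T_1}Z_i,\sum_{T_2}Z_i\rangle=\|\sum_{T_1\cup T_2}Z_i\|_2^2-\|\sum_{T_1}Z_i\|_2^2-\|\sum_{T_2}Z_i\|_2^2$. The centerings $k\|\omega_{T_1\cup T_2}\|_1-k\|\omega_{T_1}\|_1-k\|\omega_{T_2}\|_1$ vanish exactly, leaving $O(\epsilon N\gamma_3)$. The stopping rule of Algorithm \ref{algorithm n <= k}, tested against $u=\mathbf{1}_T/\sqrt{|T|}$, gives $\bigl|\|\sum_{i\in T}Z_i\|_2^2-k\|\omega_T\|_1\bigr|\le |T|\gamma_3$ for every $T$. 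So the sets of size up to $2\epsilon N$ that polarization produces cause no trouble.

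Apply this to the pair $(S\setminus R,R)$. Also apply it to $(T^{+},C)$ and $(T^{-},C)$, after splitting $T$ according to the sign of $\langle Z_i,\sum_{j\in C}Z_j\rangle$. In the classical case, Lemma \ref{lemma: guarantee of omega regularity 2 1}(iii) has no $k\|\omega_T\|_1$ term, so plain Cauchy--Schwarz already suffices there.
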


As outlined above, the sequential execution of Algorithms \ref{algorithm: prefiltering}, \ref{algorithm n > k} (or \ref{algorithm n <= k}), and \ref{algorithm: weight filtering} allows us to adaptively refine the data weights $\omega $ in response to the contaminated observations $\mathbf{Y}$. This adaptive filtering strictly enforces the \hyperref[def_omega_regularity_init]{$\omega $-regularity} condition with respect to the predefined parameters $\beta _{1}$ and $\beta _{2}$. Crucially, endowed with this enforced regularity, the adversarially corrupted dataset $\mathbf{Y}$ is compelled to statistically mimic the concentration behavior of the uncorrupted samples $\tilde{\mathbf{Y}}$. We formalize this fundamental property in the following lemma.

\begin{lemma}\label{lemma: property of the filtered weights}
    Let $\omega $ be the output weights after executing Algorithm \ref{algorithm: prefiltering}, \ref{algorithm n > k} (or \ref{algorithm n <= k}), and \ref{algorithm: weight filtering} without rejection (otherwise, we simply reject $H _{0}$), and $\beta _{1},\beta _{2}$ seleced as in \eqref{eq: specified beta_1 and beta_2}. Then, with high probability we have 
    \begin{equation}\label{eq: property of the filtered weights}
      \left|\left\lVert \sum\limits_{i=1}^{N}\sqrt{\omega _{i}}Y _{i}\right\rVert_{2}^{2}-k \left\lVert \omega \right\rVert_{1}^{}-N ^{2}\left\lVert \mu \right\rVert_{2}^{2}\right|\le c \left(\sqrt{N}\beta _{1}+\beta _{2}+\epsilon N \gamma +\left(N ^{\frac{3}{2}}\left\lVert \mu \right\rVert_{2}^{}+\sqrt{Nk}\right)\sqrt{\ln \left(\frac{1}{\alpha }\right)}+\ln \left(\frac{1}{\alpha }\right)\right).
    \end{equation}
\end{lemma}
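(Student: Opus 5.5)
Write $G:=[N]\setminus C$ for the uncorrupted indices, so that $Y_i=\tilde Y_i$ for $i\in G$ and $|C|\le \epsilon N$. The plan is to expand the weighted sum $\sum_i\sqrt{\omega_i}Y_i$ into a clean part and a contaminated part and control each separately. Concretely,
\begin{equation*}
\Big\lVert \sum_{i=1}^N \sqrt{\omega_i}Y_i\Big\rVert_2^2
=\Big\lVert \sum_{i\in G}\sqrt{\omega_i}\tilde Y_i\Big\rVert_2^2
+2\Big\langle \sum_{i\in C}\sqrt{\omega_i}Y_i,\ \sum_{j=1}^N\sqrt{\omega_j}Y_j\Big\rangle
-\Big\lVert \sum_{i\in C}\sqrt{\omega_i}Y_i\Big\rVert_2^2 .
\end{equation*}
We subtract $k\lVert\omega\rVert_1=k\lVert\omega_G\rVert_1+k\lVert\omega_C\rVert_1$ and distribute the $k$-terms as follows: $k\lVert\omega_G\rVert_1$ goes with the clean term, $2k\lVert\omega_C\rVert_1$ goes with the cross term, and $-k\lVert\omega_C\rVert_1$ goes with the last term. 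This regrouping is exact.

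\textbf{Contaminated terms.} Since $|C|\le\epsilon N$, Lemma \ref{lemma: guarantee of omega regularity 3} applied with $S=C$ bounds the cross-term deviation by $c(\sqrt N\beta_1+\beta_2+\epsilon N\gamma)$. For the last term, Lemma \ref{lemma: guarantee of omega regularity 2 1}(iii) (when $N>k$) or Lemma \ref{lemma: guarantee of omega regularity 2 2}(iii) (when $N\le k$) gives a bound of order $\epsilon N\gamma$. When $N>k$ we also need $k\lVert\omega_C\rVert_1\le k\epsilon N\lesssim \epsilon N\gamma_2$, which holds since $k\le\sqrt{Nk}$. These two pieces produce the first three summands on the right of \eqref{eq: property of the filtered weights}.

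\textbf{Clean term.} Write $\tilde Y_i=\mu+\xi_i$ and $s:=\sum_{i\in G}\sqrt{\omega_i}$. Then
\begin{equation*}
\Big\lVert\sum_{i\in G}\sqrt{\omega_i}\tilde Y_i\Big\rVert_2^2
= s^2\lVert\mu\rVert_2^2+2s\Big\langle\mu,\sum_{i\in G}\sqrt{\omega_i}\xi_i\Big\rangle+\Big\lVert\sum_{i\in G}\sqrt{\omega_i}\xi_i\Big\rVert_2^2 .
\end{equation*}
Because the weights depend on the data, I would not condition on $\omega$. Instead I would reuse the structure of the earlier filtering lemmas:
\begin{itemize}
\item[(a)] For the unweighted sum over $G$, a Gaussian tail bound gives $|\langle\mu,\sum_{G}\xi_i\rangle|\lesssim \sqrt N\lVert\mu\rVert_2\sqrt{\ln(1/\alpha)}$, and Hanson--Wright (as in Theorem \ref{theorem: simple main results 1}, with covariance of trace $k$) gives $\big|\lVert\sum_{G}\xi_i\rVert_2^2-|G|k\big|\lesssim \sqrt{Nk\ln(1/\alpha)}+\ln(1/\alpha)$. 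Multiplying the first bound by $s\le N$ yields the $N^{3/2}\lVert\mu\rVert_2\sqrt{\ln(1/\alpha)}$ term.
\item[(b)] The weights differ from $1$ on $G$ only on a set of total mass $O(\epsilon N)$, since the filters remove at most $O(\epsilon N)$ weight. The difference between the weighted and unweighted sums is therefore a sum over a small-mass set. I would bound it by the regularity properties (i) and (ii) applied to sets of size $O(\epsilon N)$, split into $O(1)$ blocks of size $\le\epsilon N$, together with the cross-term bound of Lemma \ref{lemma: guarantee of omega regularity 3}. This again produces errors of order $\sqrt N\beta_1+\beta_2+\epsilon N\gamma$.
\item[(c)] Finally, $s^2$ differs from $N^2$ by at most $O(\epsilon N^2)$. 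The term $\epsilon N^2\lVert\mu\rVert_2^2$ is absorbed into $\sqrt N\beta_1$, because $\beta_1$ contains the summand $\epsilon N\sqrt N\lVert\mu\rVert_2^2$.
\end{itemize}

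\textbf{Main obstacle.} Step (b) is where I expect the difficulty: it requires handling the data-dependent weights on the clean samples uniformly over all small subsets. My first attempt would be a union bound over subsets of size $\epsilon N$, which costs $\epsilon N\ln(1/\epsilon)$ in the exponent. That cost is exactly what the $\ln(1/\epsilon)$ factors in $\beta_2$ and $\gamma$ are designed to absorb, so the bound should close at the stated order. Collecting all terms, and taking a union bound over the $O(1)$ high-probability events used above, gives \eqref{eq: property of the filtered weights}.
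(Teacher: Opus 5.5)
Your regrouping is exact, and your handling of the contaminated terms is correct: Lemma~\ref{lemma: guarantee of omega regularity 3} with $S=C$, part (iii) of Lemma~\ref{lemma: guarantee of omega regularity 2 1} or~\ref{lemma: guarantee of omega regularity 2 2}, and $k\epsilon N\lesssim\epsilon N\gamma_2$ when $N>k$. Step (c) is also fine. The clean part is where the proposal breaks, in three places.

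\textbf{The Hanson--Wright step in (a) is off by a factor $\sqrt N$.} For fixed $G$, $\sum_{i\in G}\xi_i\sim\mathcal N(\mathbf 0,|G|\Sigma)$ with $\lVert |G|\Sigma\rVert_F\le N\sqrt k$ and $\lVert |G|\Sigma\rVert_2\le N$. The fluctuation is therefore $N\sqrt{k\ln(1/\alpha)}+N\ln(1/\alpha)$, not $\sqrt{Nk\ln(1/\alpha)}+\ln(1/\alpha)$. This cannot be tightened. Take $\epsilon=0$, $\mu=0$, $\omega=\mathbf 1$: the left side of \eqref{eq: property of the filtered weights} is $|\lVert\sum_i\xi_i\rVert_2^2-Nk|$, of order $N\lVert\Sigma\rVert_F$ (that is, $N\sqrt k$ for an exact projection). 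The right side is only $O(\sqrt{Nk\ln(1/\alpha)}+\ln(1/\alpha))$. So the $\sqrt{Nk}$ in the statement has to be read as $N\sqrt k$. That is the scale of $\mathrm I_1$ in Appendix~\ref{subsection: proof of the raw main upper bound 1}, and the scale the $\sqrt k\ln(1/\alpha)/N$ term of $P_{\mathrm{raw}}^2$ is built to beat. You should prove that version rather than bend the concentration step to fit the display.

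\textbf{$G$ is not a fixed index set.} $G=[N]\setminus C$ is chosen by the adversary after seeing $\tilde{\mathbf Y}$, so fixed-$G$ tail bounds are invalid. For instance, deleting the $\epsilon N$ largest values of $\langle\mu,\xi_i\rangle$ shifts $\langle\mu,\sum_G\xi_i\rangle$ by order $\epsilon N\sqrt{\mu^\top\Sigma\mu\,\ln(1/\epsilon)}$. Expand around the fixed set $[N]$ instead. Write $\sqrt{\omega_i}\mathbf 1_{\{i\in G\}}=1-b_i$ with $b\in[0,1]^N$ and $\lVert b\rVert_1=O(\epsilon N)$, so that $C$ is absorbed into the deficiency vector $b$.

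\textbf{Step (b) treats small mass as small support; this is the main gap.} The deficiency $b$ is fractional and typically supported on far more than $O(\epsilon N)$ indices. Algorithm~\ref{algorithm n <= k} shrinks every coordinate with $v_i\neq0$, and Algorithm~\ref{algorithm n > k} shrinks a possibly different top-$I$ block in each round. So there are no ``$O(1)$ blocks of size $\epsilon N$'' for a subset union bound to act on. The regularity properties (i)--(iii) concern sums of $Y_i$ or $\sqrt{\omega_i}Y_i$ over sets of size at most $\epsilon N$, so they say nothing about $\sum_i b_i\xi_i$.

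Let $\Xi\in\mathbb R^{N\times d}$ have rows $\xi_i^\top$ and set $M:=\Xi\Xi^\top-kI_N$. The centered noise part of the clean term is $\mathbf 1^\top M\mathbf 1-2b^\top M\mathbf 1+b^\top Mb$, plus the $\mu$-terms $\langle\mu,\Xi^\top\mathbf 1\rangle$ and $\langle\mu,\Xi^\top b\rangle$.
\begin{itemize}
\item The pieces linear in $b$ do reduce to subsets. The extreme points of $\{b\in[0,1]^N:\lVert b\rVert_1\le m\}$ (integer $m$) are indicators of sets of size at most $m$; this is the vertex argument in the proof of Lemma~\ref{lemma: bound on the max eigenvalue of the covariance matrix}. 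Your union bound then gives the $\mathrm I_3$ and $\mathrm{III}_2$ estimates of Appendix~\ref{subsection: proof of the raw main upper bound 1}, which are absorbed by $\sqrt N\beta_1$.
\item $b^\top Mb$ is an indefinite quadratic form, so its extremes over the polytope need not be at vertices. Convexity of $\lVert\Xi^\top b\rVert_2^2$ together with $k\lVert b\rVert_2^2\le k\lVert b\rVert_1$ gives only $|b^\top Mb|=O(k\epsilon N+\beta_2+\epsilon N\gamma)$. That suffices for $N>k$, but not when $k\gg N$ (up to logarithms).
\item For $k\gg N$ you need $|b^\top Mb|\le\lVert b\rVert_2^2\lVert\Xi\Xi^\top-kI_N\rVert_2\lesssim\epsilon N\gamma_3$, from part (ii) of Lemma~\ref{lemma: bounds on the l2-operator norm of X^TX and XX^T}. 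For $N>k$ you can alternatively use $\lVert\Xi^\top b\rVert_2^2\le\lVert b\rVert_1\lVert\sum_ib_i\xi_i\xi_i^\top\rVert_2$ with Lemma~\ref{lemma: bound on the max eigenvalue of the covariance matrix}.
\end{itemize}
These uniform-in-weights operator-norm bounds are exactly what the paper adds in Appendix~\ref{subsection: necessary concentration inequality for the main upper bound 2}, on top of the argument it defers to Appendix C.6 of \cite{li2026robustsignaldetectionquadratically}, to handle the non-identity covariance of $A_k^\dagger\tilde Y_i$. Your plan never invokes them, and without them step (b) does not close.
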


On the LHS of the preceding inequality, the term $k \left\lVert \omega \right\rVert_{1}^{}$ is a known quantity upon completion of the filtering algorithms. To guarantee a strictly detectable separation between $H _{0}$ and $H _{1}$, we require the signal term $N ^{2}\left\lVert \mu \right\rVert_{2}^{2}$ to asymptotically dominate all residual terms on the RHS. This bounding strategy closely mirrors the approach developed for the \hyperref[section: theoretical algorithm]{theoretical algorithm}. Through straightforward algebraic manipulations, we formally establish the following equivalence.
\begin{theorem}[]\label{theorem: raw main upper bound 2}
  Assume the same conditions as in Theorem \ref{theorem: raw main upper bound 1}, for any $A _{k}^{\dagger },1 \le k \le d$, if
  \begin{equation}\label{eq: raw required condition for the main upper bound 2}
   \left\lVert A _{k}^{\dagger }\mu \right\rVert_{2}^{2}\gtrsim \sigma ^{2}\max\limits \left\{\frac{\epsilon \ln \left(\frac{N}{\alpha }\right)}{\sqrt{N}},\epsilon ^{2}\ln \left(\frac{N}{\alpha }\right),\sqrt{\frac{\epsilon ^{2}k \ln \left(\frac{N}{\alpha }\right)}{N}},\frac{\sqrt{k}\ln \left(\frac{1}{\alpha }\right)}{N}\right\}:=P _{\text{raw}}^{2}(\epsilon ,K,N,\sigma ^{2},k),
\end{equation}
we are able to test \eqref{eq: testing problem} with Type I and Type II errors uniformly less than $\alpha $.
\end{theorem}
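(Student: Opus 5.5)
The plan is to build the test from the filtered statistic of Lemma \ref{lemma: property of the filtered weights} and show that the signal term dominates all residual terms on the right-hand side of \eqref{eq: property of the filtered weights} exactly when \eqref{eq: raw required condition for the main upper bound 2} holds. Throughout, $Y_i$ stands for $A_k^\dagger Y_i$ and $\mu$ for $A_k^\dagger\mu$, with $\sigma=1$. Because $X_k^\dagger$ is built independently of the data, the projected clean samples are i.i.d. $\mathcal{N}(A_k^\dagger\mu,\mathbf{I}_d-X_k^\dagger)$. This covariance has trace $k$ and satisfies $\mathbf{0}\preceq\cdot\preceq\mathbf{I}_d$. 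This is exactly the setting of Theorem \ref{theorem: simple main results 1}: every Hanson--Wright type concentration used in Lemmas \ref{lemma: guarantee of omega regularity 1}--\ref{lemma: property of the filtered weights} depends only on $\mathrm{tr}=k$ and the operator norm bound $1$, so those lemmas apply with $k$ playing the role of dimension.

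\emph{Step 1: define the test.} Run Algorithm \ref{algorithm: prefiltering}, then Algorithm \ref{algorithm n > k} or \ref{algorithm n <= k} depending on whether $N>k$, then Algorithm \ref{algorithm: weight filtering}. Reject if any stage returns \texttt{None}. Otherwise set
\begin{equation*}
T:=\Big\lVert \sum_i\sqrt{\omega_i}Y_i\Big\rVert_2^2-k\lVert\omega\rVert_1,
\end{equation*}
and reject when $T\ge \tau$. The threshold $\tau$ is the right-hand side of \eqref{eq: property of the filtered weights} evaluated at $\mu=0$, times a large constant.

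\emph{Step 2: Type I error.} Under $H_0$, Lemmas \ref{lemma: guarantee of omega regularity 1}, \ref{lemma: guarantee of omega regularity 2 1}/\ref{lemma: guarantee of omega regularity 2 2} ensure that no stage rejects with probability at least $1-\alpha$. Lemma \ref{lemma: property of the filtered weights} with $\mu=0$ then gives $T<\tau$. A union bound over the constantly many events, with $\alpha$ replaced by $\alpha/c$, controls the Type I error.

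\emph{Step 3: Type II error.} Under $H_1$, an early rejection is harmless, so assume all stages finish. Lemma \ref{lemma: property of the filtered weights} gives $T\ge N^2\lVert\mu\rVert_2^2-R(\mu)$. It therefore suffices that $N^2\lVert\mu\rVert_2^2\ge 2R(\mu)+\tau$. I would expand $R(\mu)$ using \eqref{eq: specified beta_1 and beta_2} and the values of $\gamma_2,\gamma_3$, and bound each term by $N^2\lVert\mu\rVert_2^2/C$.

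\emph{Step 4: term-by-term comparison (the main obstacle).} The terms of $R(\mu)$ fall into three groups.

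\begin{itemize}
\item \emph{$\mu$-free terms.} These are bounded directly by the four entries of $P_{\text{raw}}^2$. From $\sqrt N\beta_1$ we get $\epsilon N^{3/2}\sqrt{k\ln(N/\alpha)}$ and $\epsilon N^{3/2}\ln(N/\alpha)$; after dividing by $N^2$ these yield the third and first entries. The term $\epsilon N\gamma$ yields $\epsilon\sqrt{k/N}$ and $\epsilon^2\ln(1/\epsilon)$ contributions. The terms $\sqrt{Nk\ln(1/\alpha)}$ and $\ln(1/\alpha)$ yield the $\sqrt k\ln(1/\alpha)/N$ entry. In $\beta_2$, the piece $(\epsilon N)^2\ln(1/\epsilon)$ yields an $\epsilon^2\ln$ term, using $\epsilon\ge 1/N$ or otherwise $\epsilon N<1$, which is trivial.
\item \emph{Terms linear in $\lVert\mu\rVert_2$.} These are $\epsilon N^2\lVert\mu\rVert_2\sqrt{\ln}$, $(\epsilon N)^2\lVert\mu\rVert_2\sqrt{\ln}$, and $N^{3/2}\lVert\mu\rVert_2\sqrt{\ln(1/\alpha)}$. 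Each is handled by AM--GM, $ab\le a^2/(2C)+Cb^2/2$, which absorbs half into $N^2\lVert\mu\rVert_2^2$. This leaves $\epsilon^2N^2\ln$ and $N\ln(1/\alpha)$, which are dominated by the second and fourth entries; $\ln(1/\alpha)/N\le\sqrt k\ln(1/\alpha)/N$ since $k\ge 1$.
\item \emph{Quadratic terms.} These are $\epsilon N^2\lVert\mu\rVert_2^2$ and $(\epsilon N)^2\lVert\mu\rVert_2^2$. Since $\epsilon<1/2$ is bounded away from $1/2$ by a constant fraction, or after shrinking the filtering constants, they have a coefficient strictly less than $N^2/C$ and are absorbed on the left-hand side.
\end{itemize}

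The delicate points are this absorption of the quadratic terms and the two regimes $N>k$ and $N\le k$. In the regime $N\le k$, the $\gamma_3$ term $\epsilon N\sqrt{k\ln(1/\alpha)}$ must be compared against $\sqrt{\epsilon^2k\ln/N}$ after division by $N^2$; this holds since $\epsilon N\sqrt k/N^2\le\epsilon\sqrt{k/N}$.
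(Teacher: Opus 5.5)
Your proposal is the paper's own argument made explicit. The paper likewise takes the bound of Lemma \ref{lemma: property of the filtered weights} for the projected data (your ``trace $k$, operator norm at most $1$'' observation is what the concentration lemmas of Appendix \ref{subsection: necessary concentration inequality for the main upper bound 2} supply for the covariance $\mathbf{I}_d-X_k^\dagger$), substitutes $\beta_1,\beta_2,\gamma$, and requires $N^2\|A_k^\dagger\mu\|_2^2$ to dominate every residual term; the term-by-term algebra you carry out is what the paper leaves as ``straightforward.''

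The one justification to repair is the absorption of the term $c\,\epsilon N^2\|\mu\|_2^2$ coming from $\sqrt{N}\beta_1$. Here $c$ is the universal constant in Lemma \ref{lemma: property of the filtered weights}, so absorption needs $\epsilon$ below a sufficiently small absolute constant depending on $c$. Merely $\epsilon<1/2$ is not enough, and the concentration constants cannot simply be shrunk. The paper also leaves this smallness assumption implicit.
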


Since
\begin{equation*}
  \left\lVert A ^{\dagger }_{k}\mu \right\rVert_{2}^{}=\sqrt{\mu ^{\top } (A _{k}^{\dagger })^{2}\mu }=\sqrt{\mu ^{\top } \left(\mathbf{I}_{d}-X _{k}^{\dagger }\right)\mu }=\sqrt{\left\lVert \mu \right\rVert_{2}^{2}-\mu ^{\top } X _{k}^{\dagger }\mu }\ge \sqrt{\left\lVert \mu \right\rVert_{2}^{2}-\tilde{h}_{S}(X _{k}^{\dagger })},
\end{equation*}
we come to the following sufficient condition.

\begin{corollary}[]\label{corollay: main upper bound 2}
  Assume the same assumptions as in Theorem \ref{corollary: main upper bound 1}. For any $1 \le k \le d $, if
  \begin{equation}\label{eq: raw main upper bound 2}
    \left\lVert \mu \right\rVert_{2}^{2}\gtrsim \sigma ^{2}\left[\tilde{h}_{S}(X _{k}^{\dagger })+P _{\text{raw}}^{2}(\epsilon ,K,N,\sigma ^{2},k)\right],
  \end{equation}
  where $\tilde{k}_{1}^{\star },\tilde{k}_{2}^{\star }$ are defined as in Definition \ref{def_approximate_optimal_dimensions}, then we are able to test the problem \eqref{eq: testing problem} with uniformly small Type I and Type II errors. Moreover, the testing procedures can be completed within polynomial-logarithmic time of the parameters $\left(N,d,\frac{1}{\epsilon },\frac{1}{\alpha }\right)$ for any type-$2$, exactly $2$-convex constraint $K$.
\end{corollary}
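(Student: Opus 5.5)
The plan is to reduce Corollary \ref{corollay: main upper bound 2} to Theorem \ref{theorem: raw main upper bound 2} through the displayed inequality relating $\left\lVert A_k^\dagger\mu\right\rVert_2$ and $\left\lVert\mu\right\rVert_2$, and then to account separately for the running time. I will take $\sigma=1$ as in Section \ref{subsection: robust testing} and rescale at the end.

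\textbf{Step 1: feasibility and the pointwise bound.} Since $X_k^\dagger$ is feasible for \eqref{eq: SDP problem}, we have $\mathbf{0}\preceq X_k^\dagger\preceq\mathbf{I}_d$, so $A_k^\dagger=(\mathbf{I}_d-X_k^\dagger)^{1/2}$ is well defined and $(A_k^\dagger)^2=\mathbf{I}_d-X_k^\dagger$. The step needing care is the bound $\mu^\top X_k^\dagger\mu\lesssim\tilde h_S(X_k^\dagger)$ for every $\mu\in K$. Only $\mu^\top X_k^\dagger\mu\le h_S(X_k^\dagger)$ holds by definition. To pass to the computed surrogate I will use \eqref{eq: property of the oracle}, which gives $h_S(X_k^\dagger)\le\kappa\,\tilde h_S(X_k^\dagger)$. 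This introduces a factor $\kappa=\mathrm{polylog}(d)$. I will either absorb $\kappa$ into the $\gtrsim$ (as the paper does with polylog factors), or state \eqref{eq: raw main upper bound 2} with $\kappa\tilde h_S$ and note the two agree up to polylog factors. I expect this $\kappa$ bookkeeping to be the main obstacle, because the text's display uses $\tilde h_S$ directly.

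\textbf{Step 2: reduction to Theorem \ref{theorem: raw main upper bound 2}.} Suppose $\left\lVert\mu\right\rVert_2^2\ge C\big[\kappa\tilde h_S(X_k^\dagger)+P_{\mathrm{raw}}^2\big]$ with $C\ge2$. Then
\begin{equation*}
\left\lVert A_k^\dagger\mu\right\rVert_2^2=\left\lVert\mu\right\rVert_2^2-\mu^\top X_k^\dagger\mu\ge\left\lVert\mu\right\rVert_2^2-\kappa\tilde h_S(X_k^\dagger)\ge\tfrac12\left\lVert\mu\right\rVert_2^2\gtrsim P_{\mathrm{raw}}^2 .
\end{equation*}
This is exactly condition \eqref{eq: raw required condition for the main upper bound 2}.

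\textbf{Step 3: applying the theorem and controlling errors.} $X_k^\dagger$ is computed from $K$ alone, independently of the data. The projected clean samples $A_k^\dagger\tilde Y_i$ are therefore i.i.d. $\mathcal N(A_k^\dagger\mu,(\mathbf I_d-X_k^\dagger))$ with trace $k$ and covariance $\preceq\mathbf I_d$. This is the setting of Theorem \ref{theorem: simple main results 1}, and hence of the filtering pipeline: Algorithms \ref{algorithm: prefiltering}, \ref{algorithm n > k}/\ref{algorithm n <= k}, \ref{algorithm: weight filtering} and Lemma \ref{lemma: property of the filtered weights}. The contamination acts on $\mathbf{Y}$, and a deterministic linear map sends corrupted rows to corrupted rows, so the adversary budget is preserved. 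Type I error is unaffected because $A_k^\dagger\mathbf 0=\mathbf 0$. Theorem \ref{theorem: raw main upper bound 2} then yields both error bounds.

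\textbf{Step 4: complexity.} Computing $X_k^\dagger$ costs $M=O(\tilde d^2\ln(1/\epsilon))$ iterations by Theorem \ref{theorem: convergence guarantee of approximate ellipsoid method}. Each iteration makes one call to the oracle $\mathcal A$ of Theorem \ref{theorem: main theorem of the cited paper}, which runs in polynomial time under the type-2, balanced, sign-invariant, exactly 2-convex hypotheses, plus an eigendecomposition for the feasibility cuts. The matrix square root $A_k^\dagger$ costs $O(d^3)$. The filtering steps are polynomial in $N,d$: each round of Algorithms \ref{algorithm n > k} and \ref{algorithm n <= k} zeroes at least one weight, so there are at most $N$ rounds. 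Together these give a total running time polynomial in $(N,d,1/\epsilon,1/\alpha)$.
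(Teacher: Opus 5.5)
Your proof follows the paper's route. You write $\|A_k^\dagger\mu\|_2^2=\|\mu\|_2^2-\mu^\top X_k^\dagger\mu$, lower-bound the correction term, invoke Theorem~\ref{theorem: raw main upper bound 2}, and take the running time from Theorem~\ref{theorem: convergence guarantee of approximate ellipsoid method} together with the oracle of Theorem~\ref{theorem: main theorem of the cited paper}.

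Your $\kappa$ bookkeeping is genuinely needed. Because $\tilde h_S(X_k^\dagger)\le h_S(X_k^\dagger)$, the paper's bound $\mu^\top X_k^\dagger\mu\le\tilde h_S(X_k^\dagger)$ is only justified in the weaker form $\mu^\top X_k^\dagger\mu\le h_S(X_k^\dagger)\le\kappa\,\tilde h_S(X_k^\dagger)$. So the rigorous statement is your second option, with $\kappa\,\tilde h_S(X_k^\dagger)$ (or $h_S(X_k^\dagger)$ itself) in the condition. Absorbing $\kappa=\mathrm{polylog}(d)$ into a $\gtrsim$ that hides only universal constants would not be legitimate.
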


Since \eqref{eq: raw main upper bound 2} holds for any $1 \le k \le d$, we are free to optimize over $k$ to obtain the sharpest bound:
\begin{equation}\label{eq: main upper bound 2}
  \begin{aligned}
    \left\lVert \mu \right\rVert_{2}^{2}&\gtrsim \min\limits _{1 \le k \le d}\sigma ^{2}\left[\tilde{h}_{S}(X _{k}^{\dagger })+P _{\text{raw}}(\epsilon ,K,N,\sigma ^{2},k)\right]\\ 
    &=\max\limits \left\{\frac{\epsilon \ln \left(\frac{N}{\alpha }\right)}{\sqrt{N}},\epsilon ^{2}\ln \left(\frac{N}{\alpha }\right),\sqrt{\frac{\epsilon ^{2}\min\limits \left\{\tilde{k}_{1}^{\star },\tilde{k}_{2}^{\star }\right\}\ln \left(\frac{N}{\alpha }\right)}{N}},\frac{\sqrt{\min\limits \left\{\tilde{k}_{1}^{\star },\tilde{k}_{2}^{\star }\right\}}\ln \left(\frac{1}{\alpha }\right)}{N}\right\}.
  \end{aligned}
\end{equation}

\begin{proof}
  Since we are allowed to optimize over $k$, we select $\tilde{k}_{1}^{\star }$. From the definition of $\tilde{k}_{1}^{\star }$, we know that $\tilde{h}_{S}(X _{\tilde{k}_{1}^{\star }}^{\dagger })=\tilde{D}_{\tilde{k}_{1}^{\star }}^{2}(K)\le \frac{\sqrt{\tilde{k}_{1}^{\star }}}{N}\sigma ^{2}$, which is dominated by the term $\frac{\sqrt{\tilde{k}_{1}^{\star }}\ln (1/\alpha )}{N}\sigma ^{2}$ in the maximal operator. Therefore, we can safely remove $\tilde{h}_{S}(X _{\tilde{k}_{1}^{\star }}^{\dagger })$ in this case. The same logic also holds for $\tilde{k}_{2}^{\star }$. The obtained two upper bounds are both valid for the problem \eqref{eq: testing problem}, hence we select the smaller one, which leads to \eqref{eq: main upper bound 2}. The proof is completed.
\end{proof}

When \eqref{eq: main upper bound 2} is satisfied, let $\tilde{k}^{\star }=\min\limits \left\{\tilde{k}_{1}^{\star },\tilde{k}_{2}^{\star }\right\}$ and $A_{\tilde{k}^{\star }}^{\dagger }$ be the corresponding approximate optimal projection. To distinguish between $H _{0}$ and $H _{1}$, we focus on the following weighted $\chi ^{2}$-type testing event
\begin{equation*}
  \left\{\left|\left\lVert \sum\limits_{i=1}^{N}\sqrt{\omega _{i}}A _{\tilde{k}^{\star }}^{\dagger }Y _{i}\right\rVert_{2}^{2}-\tilde{k}^{\star } \left\lVert \omega \right\rVert_{1}^{}\right|\ge c _{2} N ^{2}P _{\text{raw}}(\epsilon ,K,N,\sigma ^{2},\tilde{k}^{\star })\right\}.
\end{equation*}
We reject $H _{0}$ when the event is true and vice versa.

We benchmark our upper bound \eqref{eq: raw main upper bound 2} against the theoretical guarantees established in \cite{li2026robustsignaldetectionquadratically}. In an idealized oracle setting where the exact Kolmogorov widths and their corresponding optimal projections are known a priori, the approximate dimensions $\tilde{k}_{1}^{\star }$ and $\tilde{k}_{2}^{\star }$ can be perfectly substituted with the exact optimal dimensions $k _{1}^{\star }$ and $k _{2}^{\star }$. This substitution yields a sharper upper bound that matches the minimax lower bound \eqref{eq: main theoretical lower bound} up to logarithmic factors (specifically, $\ln N$, $\ln \left(\frac{1}{\epsilon }\right)$, and $\ln \left(\frac{1}{\alpha }\right)$) under QCO constraints. Remarkably, even in our practical setting where these Kolmogorov widths are inherently unknown, Lemma \ref{lemma: property of approximate first and second optimal dimensions} ensures that the fully empirical upper bound \eqref{eq: raw main upper bound 2} incurs merely an additional $\text{polylog}(d)$ multiplicative factor.

The proofs of Lemmas \ref{lemma: guarantee of omega regularity 1}, \ref{lemma: guarantee of omega regularity 2 1}, \ref{lemma: guarantee of omega regularity 2 2}, \ref{lemma: guarantee of omega regularity 3}, and \ref{lemma: property of the filtered weights}, which the rationale of the filtering method, share substantial overlap with the analysis in \cite{li2026robustsignaldetectionquadratically}. We therefore refer the interested reader to Appendix C.6 of \cite{li2026robustsignaldetectionquadratically} for the full derivations. Nevertheless, our framework requires a non-trivial adaptation to accommodate the non-identity covariance structure of the approximately projected observations $A _{k}^{\dagger }\tilde{Y}_{i}$. (Had $A _{k}^{\dagger }$ been an exact orthogonal projection, this covariance would trivially reduce to the identity within a rotated subspace). To rigorously circumvent this difficulty, we establish Lemma \ref{lemma: bound on the max eigenvalue of the covariance matrix} and Lemma \ref{lemma: bounds on the l2-operator norm of X^TX and XX^T} to govern the $\ell _{2}$-operator norms of the sample covariance and Gram matrices, respectively. The proofs of these two lemmas are deferred to Appendix \ref{subsection: necessary concentration inequality for the main upper bound 2}. The pseudo code of the polynomial-time algorithm is shown in the following.

\begin{algorithm}[htbp]
    \caption{unconditional polynomial-time algorithm.}\label{algorithm: polynomial algorithm}
    
    \Import $\mathbf{Y}, N,K,\sigma =1$, Subalgorithm \ref{algorithm: prefiltering}, \ref{algorithm n > k}, \ref{algorithm n <= k}, \ref{algorithm: weight filtering}

    \vspace*{1em}

    $\mathbf{Y}=\mathbf{Y}/\sigma $

    Compute $\tilde{k} _{1}^{\star }, A _{1}^{\dagger }, \tilde{k} _{2}^{\star }, A _{2}^{\dagger }$ from $N,K,\sigma =1,\epsilon $ as in Definition \ref{def_approximate_optimal_dimensions} and Algorithm \ref{algorithm: ellipsoid method for the sdp}

    \If{$\tilde{k} _{1}^{\star }\le \tilde{k} _{2}^{\star }$}{
        $\tilde{k} ^{\star }= \tilde{k} _{1}^{\star }, A ^{\dagger }=A _{1}^{\dagger }$
    }

    \Else{
        $\tilde{k} ^{\star }=\tilde{k} _{2}^{\star }, A ^{\dagger }=A _{2}^{\dagger }$
    }

    $\mathbf{Y}=\mathbf{Y}A ^{\dagger }$

    Execute Algorithm \ref{algorithm: prefiltering}, and record the returning value $\texttt{R} _{1}$

    \If {$\texttt{R} _{1}$ is \texttt{None}}{
        Reject $H _{0}$
        
        \Return
    }

    \Else {
        $\mathbf{Y}=\texttt{R} _{1}$
    }

    Set $\omega =\mathbf{1}_{[\tilde{k} ^{\star }]}$

    \If {$N>\tilde{k} ^{\star }$}{
        Execute Algorithm \ref{algorithm n > k}, and record the returning value $\texttt{R}_{2}$
    }
    \Else {
        Execute Algorithm \ref{algorithm n <= k}, and record the returning value $\texttt{R} _{2}$
    }
    \If {$\texttt{R}_{2}$ is \texttt{None}} {
        Reject $H _{0}$

        \Return
    }

    \Else {
        $\omega =\texttt{R} _{2}$
    }

    Execute Algorithm \ref{algorithm: weight filtering}, and record the returning value $\texttt{R} _{3}$

    $\omega =\texttt{R} _{3}$

    \If {$\left|\left\lVert \sum\limits_{i=1}^{N}\sqrt{\omega _{i}}Y _{i}\right\rVert_{2}^{2}-\tilde{k} ^{\star } \left\lVert \omega \right\rVert_{1}^{}\right|\ge c _{2} N ^{2}P(\epsilon ,\tilde{k} ^{\star },N,1)$}{
        Reject $H _{0}$
    }

    \Else {
        Accept $H _{0}$
    }

    \Return
\end{algorithm}

\section{Discussion}\label{section: discussion}

In this paper, we establish a polynomial-time algorithmic framework for robust signal detection, resolving the testing problem \eqref{eq: testing problem} with rigorous control over Type I and Type II errors. Our procedure accommodates any balanced, type-$2$, and $2$-convex constraint set $K$, thereby generalizing the QCO constraints analyzed in recent literature. The computational tractability of our framework hinges crucially on the SDP relaxation of the Kolmogorov widths \eqref{eq: SDP problem}, complemented by the approximation schemes detailed in \cite{10.1145/3406325.3451128} and Section \ref{section: approximate subgradient and ellipsoid method}. Remarkably, our empirical minimax upper bound derived in Section \ref{section: testing procedures} is near-optimal, incurring merely a $\text{polylog}(d)$ multiplicative penalty compared to the theoretical minimax rates achieved by \cite{li2026robustsignaldetectionquadratically} in the oracle QCO setting.

Looking forward, several compelling directions warrant further investigation. A natural extension is the robust $\ell _{p}$-norm testing problem, obtained by replacing the $\ell _{2}$-norm objective in \eqref{eq: testing problem} with an $\ell _{p}$-norm. For the regime $1 \le p<2$, \cite{li2026robustsignaldetectionquadratically} established that the minimax lower bound and corresponding upper bound successfully transfer under $p$-convex orthosymmetric (PCO) constraints. Nevertheless, given the inherent computational intractability of evaluating $\ell _{p}$-Kolmogorov widths, the existence of an efficient algorithmic counterpart remains unknown. The scenario where $p>2$ is fundamentally more challenging. Although the $\chi ^{p}$-test achieves minimax optimality in the unconstrained, corruption-free model \cite{ingster2003nonparametric}, the exact behavior of the minimax separation rates under the interplay of structural constraints and adversarial corruption is entirely open.

Another critical avenue is testing against a shifted null hypothesis, $H _{0}: \mu =\mu _{0}$ ($\mu _{0}\neq 0$). For clean data, \cite{wei2020local} proved that the minimax lower bound under ellipsoidal constraints is governed by localized Kolmogorov widths. A broader formulation extends this to composite null hypotheses, where $H _{0}: \left\lVert \mu \right\rVert_{2}^{}\le \rho _{0}$ is tested against $H _{1}:\left\lVert \mu \right\rVert_{2}^{}\ge \rho _{1}$. Relevant unconstrained and uncorrupted baselines can be found in \cite{kania2025testingimprecisehypotheses}, yet the robust constrained version stands as an open problem.

Finally, characterizing robust testing over more exotic geometries --- such as general symmetric, arbitrarily convex, or non-convex sets—remains a highly non-trivial frontier. Fully resolving this demands bridging two fundamental questions: (i) identifying the absolute information-theoretic limits (i.e., establishing minimax lower bounds and the existence of statistical upper bounds without computational constraints), and (ii) understanding the potential statistical-computational gap by designing efficient algorithms, recognizing that polynomial-time tractability may inherently require sub-optimal statistical conditions.

\newpage

\appendix

\bibliographystyle{plainnat}
\bibliography{biblio}

\newpage

\section{Theoretical Algorithm}\label{section: theoretical algorithm}

To provide a complete theoretical picture of the upper bounds presented in Section \ref{section: testing procedures}, this appendix details the fully information-theoretic testing procedure. Although this theoretical algorithm operates in exponential time—necessitating an exhaustive search over all possible subsets—it achieves a sharper statistical upper bound than the polynomial-time filtering developed in Section \ref{section: testing procedures}. At its core, this procedure is anchored by a classical $\chi ^{2}$-type test combined with the high-probability existence of a \textit{consistent subset} among the observations $\mathbf{Y}$. We formally introduce the definition of this consistency property below.

\begin{definition}[Consistent subset]\label{def_consistent_subset_init}
    A subset $S \subset \mathbf{Y}=\left\{Y _{1},\dots,Y _{N}\right\}$ is called a consistent subset regarding the contamination fraction $\epsilon $ and a test $\phi :2 ^{S}\rightarrow \left\{0,1\right\}$ if\\ 
    \hspace*{0.5em}(i), $\left|S\right|\ge (1-\epsilon )N$,\\ 
    \hspace*{0.5em}(ii), $\phi (S ^{\prime })=\phi (S)$ for any $S ^{\prime }\subset S$ with $\left|S ^{\prime }\right|\ge (1-2\epsilon )N$.
\end{definition}

The intuition driving the \hyperref[def_consistent_subset_init]{consistent subset} formulation is conceptually straightforward. Given the observations $\mathbf{Y}$ from adversarial contamination up to $\epsilon $ fraction, it is guaranteed to contain a subset of purely uncorrupted samples (specifically, $\mathbf{Y}_{[N]\backslash C}$, even though the corruption indices $C$ is unknown). Suppose $\mathbf{Y}_{S _{0}}$ forms a valid consistent subset for some index set $S _{0}\subset [N]$. It then becomes possible to link the test statistic evaluated on $\mathbf{Y}_{S _{0}}$ back to the unobservable original samples $\tilde{\mathbf{Y}}$. This exact correspondence is validated by demonstrating that, under appropriate conditions on $\rho $, $\tilde{\mathbf{Y}}$ inherently constitutes a consistent subset with high probability. As a result, evaluating the proposed $\chi ^{2}$-type test $\phi _{e}$ yields the crucial equivalence $\phi _{e}(\mathbf{Y}_{S _{0}})=\phi _{e}(\mathbf{Y}_{S _{0} \cap [N]\backslash C})=\phi _{e}(\mathbf{Y}_{[N]\backslash C})=\phi _{e}(\tilde{\mathbf{Y}})$. In essence, extracting and testing on $S _{0}$ allows us to completely bypass the adversarial corruption and perfectly replicate the oracle test on the authentic data $\tilde{\mathbf{Y}}$.

Recall the surrogate projection matrix $A _{k}^{\dagger }:=\left(\mathbf{I}_{d}-X _{k}^{\dagger }\right)^{\frac{1}{2}}$, and consider the following $\chi ^{2}$-type rejection region:
\begin{equation}\label{eq: definition of the indicator event}
E _{e}:=\left\{\left\lVert A _{k}^{\dagger }\sum\limits _{i \in S}^{}Y _{i}\right\rVert_{2}^{2}-k \left|S\right|\sigma ^{2}\ge c\left|S\right|^{2}E _{\text{raw}}^{2}(\epsilon ,K,N,\sigma ^{2},k)\right\}.
\end{equation}
Here, $S$ denotes a subset of the data (with a slight abuse of notation, $S$ interchangeably refers to the index set within $[N]$ and the corresponding observations in $\mathbf{Y}$), $c > 0$ is a carefully chosen absolute constant, and the threshold $E _{\text{raw}}^{2}(\epsilon ,K,N,\sigma ^{2},k)$ is defined as:
\begin{equation}\label{eq: raw required condition of the main upper bound 1}
E _{\text{raw}}^{2}(\epsilon ,K,N,\sigma ^{2},k):=\sigma ^{2}\max\limits \left\{\frac{\sqrt{k}}{N},\epsilon ^{2}\ln \left(\frac{1}{\epsilon }\right),\sqrt{\frac{\epsilon ^{2}\ln \left(\frac{1}{\epsilon }\right)k}{N}}\right\}.
\end{equation}
Evaluated on any subset $S$, the theoretical test $\phi _{e}$ rejects $H _{0}$ if and only if the event $E _{e}$ holds. Given this explicit formulation of the test $\phi _{e}$, the subsequent lemma establishes that the underlying clean dataset $\tilde{\mathbf{Y}}$ naturally forms a consistent subset with high probability.

\begin{theorem}[]\label{theorem: raw main upper bound 1}
  For any $1 \le k \le d$, if $\left\lVert A ^{\dagger }_{k}\mu \right\rVert_{2}^{}\gtrsim E _{\text{raw}}(\epsilon ,K,N,\sigma ^{2},k)$, then with probability $1-o(1)$, $\tilde{\mathbf{Y}}A _{k}^{\dagger }$ is consistent with respect to $\phi _{e}$ defined above. Moreover, for the original samples $\tilde{\mathbf{Y}}$, $\phi _{e}$ achieves $o(1)$ Type I and Type II errors uniformly over $\mu \in K$.
\end{theorem}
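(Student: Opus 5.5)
Write $Z_i:=A_k^{\dagger}\tilde Y_i=A_k^{\dagger}\mu+\sigma A_k^{\dagger}\xi_i$. Since $\mathbf{0}\preceq X_k^{\dagger}\preceq \mathbf{I}_d$ and $\text{tr}(X_k^{\dagger})=d-k$, the covariance $\Sigma:=\sigma^2(A_k^{\dagger})^2=\sigma^2(\mathbf{I}_d-X_k^{\dagger})$ satisfies $\mathbf{0}\preceq\Sigma\preceq\sigma^2\mathbf{I}_d$ and $\text{tr}(\Sigma)=k\sigma^2$. This is exactly the setting of Theorem \ref{theorem: simple main results 1}. The matrix $A_k^{\dagger}$ does not depend on the data, so the $Z_i$ are i.i.d. For any index set $S$, the statistic is
\begin{equation*}
\Big\|\sum_{i\in S}Z_i\Big\|_2^2-k|S|\sigma^2=|S|^2\|A_k^{\dagger}\mu\|_2^2+2|S|\langle A_k^{\dagger}\mu,G_S\rangle+\big(\|G_S\|_2^2-|S|\text{tr}(\Sigma)\big),
\end{equation*}
where $G_S=\sum_{i\in S}\sigma A_k^{\dagger}\xi_i\sim\mathcal N(0,|S|\Sigma)$.

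The plan is to show two things with probability $1-o(1)$. Under $H_0$, the statistic is below the threshold $c|S|^2E_{\text{raw}}^2$ for every $S\subset[N]$ with $|S|\ge(1-2\epsilon)N$. Under $H_1$ with $\|A_k^{\dagger}\mu\|_2\gtrsim E_{\text{raw}}$, it is above the threshold for all such $S$. Either fact gives consistency of $\tilde{\mathbf Y}A_k^{\dagger}$, and the choice $S=[N]$ gives the Type I and Type II error claims.

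The fluctuations will be controlled by reducing each subset to the full sample plus a small removed part. Write $S=[N]\setminus T$ with $|T|\le 2\epsilon N$.

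\textbf{Full-sample quadratic term.} Apply Hanson--Wright, as in the proof of Theorem \ref{theorem: simple main results 1}, to $\|G_{[N]}\|_2^2-N\text{tr}(\Sigma)$. Using $\|\Sigma\|_F\le\sqrt{k}\sigma^2$ and $\|\Sigma\|_{op}\le\sigma^2$, this term is $O(N\sqrt{k}\sigma^2)$ with constant probability. That is at most $N^2\cdot\frac{\sqrt k}{N}\sigma^2$, which is absorbed by the first term of $E_{\text{raw}}^2$.

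\textbf{Removed part $T$.} We need, uniformly over $|T|\le 2\epsilon N$, bounds on $\|G_T\|_2^2-|T|\text{tr}(\Sigma)$ and on the cross term $\langle G_T,G_{[N]}\rangle$.
\begin{itemize}
\item For the quadratic term, Hanson--Wright on each $G_T$ combined with a union bound over $\binom{N}{2\epsilon N}\le e^{2\epsilon N\ln(e/(2\epsilon))}$ sets gives deviations $\lesssim\sigma^2\big(\sqrt{k\,\epsilon N\cdot\epsilon N\ln(1/\epsilon)}+\epsilon N\cdot\epsilon N\ln(1/\epsilon)\big)$.
\item For the cross term, condition on $G_{[N]}$, whose norm is $\approx\sigma\sqrt{Nk}$. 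Gaussian concentration of the linear form $\langle G_T,u\rangle$ over the same union bound gives $\lesssim\sigma^2\sqrt{Nk}\sqrt{\epsilon N\cdot\epsilon N\ln(1/\epsilon)}$. Since $\text{Var}\langle\sigma A_k^{\dagger}\xi,u\rangle\le\sigma^2\|u\|^2$, the covariance structure only helps here.
\end{itemize}
After dividing by $N^2$, these contributions are dominated by $\sigma^2\epsilon^2\ln(1/\epsilon)$ and $\sigma^2\sqrt{\epsilon^2\ln(1/\epsilon)k/N}$. This matches the definition of $E_{\text{raw}}$, and is the reason the $\ln(1/\epsilon)$ factors appear in it.

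\textbf{Linear term.} The term $2|S|\langle A_k^{\dagger}\mu,G_S\rangle$ is handled the same way, with the same union bound. It is at most
\begin{equation*}
|S|\,\|A_k^{\dagger}\mu\|\,\sigma\big(\sqrt N+\epsilon N\sqrt{\ln(1/\epsilon)}\big).
\end{equation*}
Under $H_1$ this is smaller than $\frac14|S|^2\|A_k^{\dagger}\mu\|^2$ once $\|A_k^{\dagger}\mu\|\gtrsim\sigma\max\{N^{-1/2},\epsilon\sqrt{\ln(1/\epsilon)}\}$. The second entry is implied by the $\epsilon^2\ln(1/\epsilon)$ term of $E_{\text{raw}}^2$. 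The first, $\sigma N^{-1/2}$, is not implied by $\sqrt{k}/N$ when $k<N$, so the Cauchy--Schwarz-type bound must be replaced by the sharper argument that the linear term is Gaussian with variance $\lesssim|S|^3\|A_k^{\dagger}\mu\|^2\sigma^2$. Its deviation $\lesssim|S|^{3/2}\|A_k^{\dagger}\mu\|\sigma$ is then a geometric mean of the signal $|S|^2\|A_k^{\dagger}\mu\|^2$ and $N\sigma^2\lesssim N^2E_{\text{raw}}^2$, so it is dominated either by the signal or by the threshold slack. With $|S|\asymp N$, choosing $c$ in \eqref{eq: definition of the indicator event} between the null fluctuation level and half the alternative signal separates the two cases for every admissible $S$ at once.

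\textbf{Main obstacle.} I expect the hardest step to be the uniform control over all $\binom{N}{\le 2\epsilon N}$ subsets, in particular the cross term $\langle G_T,G_{[N]}\rangle$. It requires conditioning carefully on the full sum and obtaining union-bound entropy of order $\epsilon N\ln(1/\epsilon)$ rather than $\epsilon N\ln N$, so that $E_{\text{raw}}$ carries no $\ln N$ factor.
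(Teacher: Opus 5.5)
Your proposal follows essentially the same route as the paper: subsets are written as the full sample minus a removed part of size at most $2\epsilon N$; Hanson--Wright handles the full-sample and removed quadratic terms, with a union bound of entropy $2\epsilon N\ln(e/(2\epsilon))$ from Lemma \ref{lemma: combination number}; the cross term is handled by conditioning on the full sum; and Gaussian tails with the same union bound handle the linear term. Two small fixes. First, your stated bound on $\langle G_T,G_{[N]}\rangle$ holds only after centering at its conditional mean $\frac{|T|}{N}\|G_{[N]}\|_2^2=|T|k\sigma^2+O(|T|\sqrt{k}\sigma^2)$; this mean is what cancels the $-2|T|k\sigma^2$ in the centering, and the leftover is absorbed by the full-sample bound. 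Second, your worry about $\sigma N^{-1/2}$ is unfounded, since $\|A_k^{\dagger}\mu\|_2^2\gtrsim\sigma^2\sqrt{k}/N\ge\sigma^2/N$ already, so your ``sharper'' argument just restates the same bound.
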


The framework of the proof of Theorem \ref{theorem: raw main upper bound 1} is essentially based on the proof of Theorem 3.8 of \cite{li2026robustsignaldetectionquadratically}, which is deferred to Appendix \ref{subsection: proof of the raw main upper bound 1}.

Following the discussions above, we know that when the selected $\mu $ and $A _{k}^{\dagger }$ satisfy the condition $\left\lVert A ^{\dagger }_{k}\mu \right\rVert_{2}^{}\gtrsim E _{\text{raw}}(\epsilon ,K,N,\sigma ^{2},k)$, $\phi _{e}$ provides us an approach to distinguish between $H _{0}$ or $H _{1}$ with uniformly small Type I and Type II errors despite the corruptions. similar to the argument for the polynomial-time algorithm, we have $\left\lVert A ^{\dagger }_{k}\mu \right\rVert_{2}^{}\ge \sqrt{\left\lVert \mu \right\rVert_{2}^{2}-\tilde{h}_{S}(X _{k}^{\dagger })}$. Therefore, it suffices that
\begin{equation*}
  \left\lVert \mu \right\rVert_{2}^{2}\gtrsim \tilde{h}_{S}(X _{k}^{\dagger })+E _{\text{raw}}^{2}(\epsilon ,K,N,\sigma ^{2},k).
\end{equation*}
Notice that the LHS is independent of $k$. Consequently, we can optimize over $k$ on the RHS for the best bound possible. This leads us to the main upper bound for the theoretical algorithm.

\begin{corollary}[Upper bound of the theoretical algorithm]\label{corollary: main upper bound 1}
  For the testing problem \eqref{eq: testing problem} with potential corruptions from a strong $\epsilon $-contamination adversary $\mathcal{C}$ and the prior constraint $K$ on the mean $\mu $, if
  \begin{equation}\label{eq: main upper bound 1}
    \left\lVert \mu \right\rVert_{2}^{}\gtrsim E ^{2}(\epsilon ,K,N,\sigma ^{2}):=\sigma ^{2}\max\limits \left\{\frac{\sqrt{\min\limits \left\{\tilde{k}_{1}^{\star },\tilde{k}_{2}^{\star }\right\}}}{N},\epsilon ^{2}\ln \left(\frac{1}{\epsilon }\right),\sqrt{\frac{\epsilon ^{2}\ln \left(\frac{1}{\epsilon }\right)\min\limits \left\{\tilde{k}_{1}^{\star },\tilde{k}_{2}^{\star }\right\}}{N}}\right\},
  \end{equation}
  then we are able to distinguish between $H _{0}$ and $H _{1}$ with error probability $o(1)$. Note that we do not require the computability of the Kolmogorov widths for the constraint $K$.
\end{corollary}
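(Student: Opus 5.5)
The approach is to derive Corollary \ref{corollary: main upper bound 1} from Theorem \ref{theorem: raw main upper bound 1} by optimizing over $k$, in parallel with the proof of \eqref{eq: main upper bound 2}. (The statement presumably means $\left\lVert \mu \right\rVert_{2}^{2}\gtrsim E^{2}$.)

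\textbf{Step 1: reduce to a condition on $\left\lVert \mu\right\rVert_2$.} Fix any $1\le k\le d$. Since $\mu\in K$, the definition of $h_S$ together with $\mathcal{O}(X)^\top X\mathcal{O}(X)\ge \frac1\kappa h_S(X)$ gives $\mu^\top X_k^\dagger\mu\le h_S(X_k^\dagger)$. Here $h_S(X_k^\dagger)\le \kappa\,\tilde{h}_S(X_k^\dagger)$. Therefore
\begin{equation*}
\left\lVert A_k^\dagger\mu\right\rVert_2^2=\left\lVert\mu\right\rVert_2^2-\mu^\top X_k^\dagger\mu\ge \left\lVert\mu\right\rVert_2^2-\kappa\,\tilde{h}_S(X_k^\dagger).
\end{equation*}
This reproduces the displayed inequality in the text, up to a $\kappa$ that is absorbed into the polylog slack (or into $\gtrsim$ if $\kappa$ is treated as a constant). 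Consequently, the condition $\left\lVert\mu\right\rVert_2^2\gtrsim \kappa\tilde{h}_S(X_k^\dagger)+E_{\text{raw}}^2(\epsilon,K,N,\sigma^2,k)$ implies the hypothesis of Theorem \ref{theorem: raw main upper bound 1}. That theorem then yields $o(1)$ Type I and Type II errors for the test $\phi_e$, run on a consistent subset as described in the appendix.

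\textbf{Step 2: choose $k$.} Take $k=\tilde{k}_1^\star$. By Definition \ref{def_approximate_optimal_dimensions}, maximality of $\tilde{k}_1^\star$ gives $\tilde D_{\tilde k_1^\star}(K)\le f_1(\tilde k_1^\star+1)$. That is, $\tilde{h}_S(X^\dagger_{\tilde k_1^\star})\le \sigma^2\sqrt{\tilde k_1^\star+1}/N\lesssim \sigma^2\sqrt{\tilde k_1^\star}/N$ (the case $\tilde k_1^\star=0$ is trivial). This is dominated by the first term of $E_{\text{raw}}^2$. Similarly, take $k=\tilde k_2^\star$. Then $\tilde{h}_S\le \sigma^2\epsilon\sqrt{(\tilde k_2^\star+1)/N}$, which is bounded by the third term of $E_{\text{raw}}^2$ up to the factor $\sqrt{\ln(1/\epsilon)}\ge$ const for $\epsilon<1/2$. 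In both cases the width term is absorbed, so either choice gives a valid sufficient condition. Because every term of $E_{\text{raw}}^2$ is monotone in $k$, picking $k=\min\{\tilde k_1^\star,\tilde k_2^\star\}$ yields exactly $E^2(\epsilon,K,N,\sigma^2)$.

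\textbf{Main obstacle.} The main obstacle is that the width term must remain absorbed at the smaller index. For example, if $\tilde k_2^\star<\tilde k_1^\star$, the bound $\tilde h_S$ at $k=\tilde k_2^\star$ is controlled only by the $f_2$ criterion. One must check that $\epsilon\sqrt{k/N}$ is indeed among the terms of the maximum, and it is. Since $\tilde D_k$ need not be monotone in $k$, I would avoid appealing to monotonicity and instead invoke each defining inequality directly at its own index, as above. Finally, I would remark that nothing here requires computing $D_k(K)$: only $\tilde h_S$, produced by Algorithm \ref{algorithm: ellipsoid method for the sdp}, enters the argument.
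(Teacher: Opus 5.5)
Your proposal is correct and follows the paper's own route. You lower-bound $\|A_k^\dagger\mu\|_2^2$ by $\|\mu\|_2^2$ minus a width term, take $k=\tilde{k}_1^\star$ or $k=\tilde{k}_2^\star$, absorb the width term through the defining inequality of the approximate optimal dimension at that same index, and keep the smaller of the two bounds.

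You are in fact more careful than the paper in two respects. First, the paper's step $\mu^\top X_k^\dagger\mu\le\tilde{h}_S(X_k^\dagger)$ is unjustified because $\tilde{h}_S\le h_S$, so your factor $\kappa$ is required and must be hidden in $\gtrsim$. Second, the paper glosses over the $+1$ offset and the $\sqrt{\ln(1/\epsilon)}$ comparison, both of which you supply.

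The one inaccuracy is calling the edge case $\tilde{k}^\star:=\min\{\tilde{k}_1^\star,\tilde{k}_2^\star\}=0$ trivial. There the leftover width, $\sigma^2/N$ or $\sigma^2\epsilon/\sqrt{N}$, cannot be absorbed into $E^2$. The statement therefore really needs $\max\{1,\tilde{k}^\star\}$ in place of $\tilde{k}^\star$.
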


\begin{proof}
  Since we are allowed to optimize over $k$, we select $\tilde{k}_{1}^{\star }$. From the definition of $\tilde{k}_{1}^{\star }$, we know that $\tilde{h}_{S}(X _{\tilde{k}_{1}^{\star }}^{\dagger })=\tilde{D}_{\tilde{k}_{1}^{\star }}^{2}(K)\le \frac{\sqrt{\tilde{k}_{1}^{\star }}}{N}\sigma ^{2}$, which is one of the terms in the maximal operator. Therefore, we can safely remove $\tilde{h}_{S}(X _{k}^{\dagger })$ when $k= \tilde{k}_{1}^{\star }$. The same logic also holds for $\tilde{k}_{2}^{\star }$. The obtained two upper bounds are both valid for the problem \eqref{eq: basic testing problem}, hence we select the smaller one, which leads to \eqref{eq: main upper bound 1}. The proof is completed.
\end{proof}

Similar to the comparison in the efficient algorithm, $\tilde{k}_{1}^{\star },\tilde{k}_{2}^{\star }$ can be substituted with $k _{1}^{\star }$ and $k _{2}^{\star }$ if the Kolmogorov widths are prior knowledge. From Lemma \ref{lemma: property of approximate first and second optimal dimensions}, we know such replacement is $\mathcal{O}(\kappa ^{2})$ sub-optimal.

However, as mentioned, one drawback for such testing procedure is that it requires checking the results of $\phi _{e}$ on all subsets $\mathbf{Y}_{S}$ with $\left|\mathbf{Y}_{S}\right|\ge (1-2\epsilon )N$, whose amount can be as large as $\left(\frac{e}{2\epsilon }\right)^{2\epsilon N}$ (see Lemma \ref{lemma: combination number}) --- a number grows exponentially with $N$. Hence, the theoretical algorithm is computationally intractable when $N$ is even moderately large and only remains theoretically valuable for the existence of the upper bound.

\section{Deferred Proofs}\label{section: deferred proofs}

\subsection{Proof of Lemma \ref{lemma: QCO sets are type-2 and exactly 2-convex}}\label{subsection: proof of the property that QCO sets are type-2 and exactly 2-convex}

\begin{proof}
  Define $\tilde{K}^{2}=\left\{\tilde{x} \left\lvert\right. \tilde{x}\in \mathbb{R}^{d}, \tilde{x}=\eta \odot x, x \in K ^{2}\right\}$ as the symmetrized squared set of $K$, where $\otimes $ means entrywise multiplication. Since $K$ is orthosymmetric and quadratically convex, we know that $\tilde{K}^{2}$ is orthosymmetric and convex. Note that under such condition, the Minkowski gauges $\rho _{K}(\cdot )$ and $\rho _{\tilde{K}^{2}}(\cdot )$ is well known to be a norm, and satisfy the connection that $\rho _{K}^{2}(x)=\rho _{\tilde{K}^{2}}(x ^{2}),\forall x \in K$. Here again $x ^{2}$ is the entrywise square.

  For any $m \in \mathbb{N}^{+}$, consider $\sum\limits_{i=1}^{m}g _{i}x _{i}$. Its $j$-th coordinate is $\left(\sum\limits_{i=1}^{m}g _{i}x _{i}\right)_{j}=\sum\limits_{i=1}^{m}g _{i}x _{ij}\sim \mathcal{N}\left(0,\sum\limits_{i=1}^{m}x _{ij}^{2}\right)$. Therefore, we have $\left(\sum\limits_{i=1}^{m}g _{i}x _{i}\right)^{2}\sim \left(\sum\limits_{i=1}^{m}x _{ij}^{2}\right)\tilde{g}_{j}^{2}$, where $\tilde{g}_{j}^{2}\sim \chi _{1}^{2}$. Since $\tilde{K} ^{2}$ is convex and contains the origin, we know that it is star-shape, and consequently for any $r,s \in \mathbb{R}^{d}$, if $\left|r _{i}\right|\le \left|s _{i}\right|,1 \le i \le d$, we have $\rho _{\tilde{K}^{2}}(r)\le \rho _{\tilde{K}^{2}}(s)$. We have 
  \begin{equation*}
    \begin{aligned}
      \mathbb{E}\left[\rho _{K}^{2}\left(\sum\limits_{i=1}^{m}\epsilon _{i}x _{i}\right)\right]&=\mathbb{E}\left[\rho _{\tilde{K}^{2}}\left(\sum\limits_{i=1}^{m}\epsilon _{i}x _{i}\right)^{2}\right]\overset{\text{(i)}}{\le }\frac{\pi }{2}\mathbb{E}\left[\rho _{\tilde{K}^{2}}\left(\sum\limits_{i=1}^{m}g _{i}x _{i}\right)^{2}\right]\\ 
      &\le \frac{\pi }{2}\rho _{\tilde{K}^{2}}\left(\sum\limits_{i=1}^{m}x _{i}^{2}\right)\mathbb{E}\max\limits _{j}\tilde{g}_{j}^{2}\overset{\text{(ii)}}{\le } c \frac{\pi }{2}\ln d \sum\limits_{i=1}^{m}\rho _{\tilde{K}^{2}}(x _{i}^{2})\\ 
      &= c \frac{\pi }{2}\ln d \sum\limits_{i=1}^{m}\rho _{K}^{2}(x _{i}^{2}).
    \end{aligned}
  \end{equation*}
  Here (i) is from Lemma 4.5 of \cite{ledoux2013probability}, and (ii) is from the standard maximum inequality for $\chi _{1}^{2}$ random variables. Therefore, the type-$2$ constant for any QCO set is $T _{2}(K)\asymp \sqrt{\ln d}$.
  
  For the exact 2-convexity, we have:
  \begin{equation*}
    \rho _{K}^{2}\left(\sum\limits_{i=1}^{m}x _{i}^{2}\right)^{\frac{1}{2}}=\rho _{\tilde{K}^{2}}\left(\sum\limits_{i=1}^{m}x _{i}^{2}\right)\le \sum\limits_{i=1}^{m}\rho _{\tilde{K}^{2}}(x _{i}^{2})=\sum\limits_{i=1}^{m}\rho _{K}^{2}(x _{i}).
  \end{equation*}
  Taking the square root on both sides proves the exact 2-convexity. The proof is completed.
\end{proof}

\subsection{Proof of Theorem \ref{theorem: convergence guarantee of approximate ellipsoid method}}\label{subsection: proof of the convergence property of the approximate ellipsoid method}
\begin{proof}
  The proof is derived from Lemma \ref{lemma: approximate subgradient} and Corollary \ref{corollary: convergence guarantee of ellipsoid method for problem with equality constraints}. First, the optimization problem \eqref{eq: SDP problem} is indeed a convex problem, where the object function $h _{S}$ is convex, and all the constraints are linear. Second, the feasible region for the problem is bounded. This can be verified by calculating the distance between any two feasible matrices $X _{1},X _{2}$: $\left\lVert X _{1}-X _{2}\right\rVert_{F}^{2}\le 2 \left\lVert X _{1}\right\rVert_{F}^{2}+2\left\lVert X _{2}\right\rVert_{F}^{2}$. Assuming the singular value decomposition of $X _{i}$ is $X _{i}=U _{i}\Lambda _{i}U _{i}^{\top } $, where $i \in \left\{1,2\right\}$, $\Lambda _{i}=\text{diag}\{\lambda _{i,1},\dots,\lambda _{i,d}\}$ and $\lambda _{i,j}$ is the $j$-th singular value of $X _{i}$, then we have $\left\lVert X _{i}\right\rVert_{F}^{2}=\text{tr}(X _{i}^{2})=\text{tr}(U _{i}\Lambda _{i}^{2}U _{i}^{\top })=\sum\limits_{j=1}^{d}\lambda _{i,j}^{2}\overset{\text{(i)}}{\le }\sum\limits_{j=1}^{d}\lambda _{i,j}=d-k$, where (i) is from the condition that $\mathbf{0}\preceq X \preceq \mathbf{I}_{d}$. Therefore, for any initial feasible point $X ^{(0)}$ (for example, $X ^{(0)}=\text{diag}\{\underbrace{1,\dots,1}_{d-k},0,\dots,0\}$), we can set $E ^{(0)}=\left\{X \left\lvert\right. \left\lVert X-X ^{(0)}\right\rVert_{F}^{}\le 2 \sqrt{d-k}\right\}$, and $E ^{(0)}$ is guaranteed to contain the minimizer $X ^{\star }$.

  Consider the $\epsilon $-tolerance set
  \begin{equation*}
    E _{\epsilon }:=\left\{X \left\lvert\right. X \in \mathbb{R}^{d \times d}, h _{S}(X)-h _{S}(X ^{\star })\le \epsilon \right\}.
  \end{equation*}
  The constraints of $X$ can be expressed as $\left\{X \left\lvert\right. AX=B\right\}$, where $A,B$ can be determined from the trace constraint $\sum\limits_{i=1}^{d}X _{ii}=k$ and the symmetry constraint $X _{ij}=X _{ji},\forall i \neq j$. The rank of the equality constraints is $1+\frac{d(d-1)}{2}$, and therefore the dimension of $\mathcal{N}(A)=d ^{2}-\left(1+\frac{d(d-1)}{2}\right)=\frac{(d-1)(d+2)}{2}$. It is also not hard to check that for any $\epsilon >0$, the intersection set $E _{\epsilon }\cap \left\{X \left\lvert\right. AX=B\right\}$
  has positive volume when considering in the $\mathbb{R}^{\frac{(d-1)(d+2)}{2}}$-dimensional manifold  determined by $A,B$ in $\mathbb{R}^{d \times d}$.

  Finally, in the $k$-th iteration:
  \begin{itemize}
    \item If $X ^{(k)}$ is not feasible, then any point in $E ^{(k+1)\complement}\cap E ^{(k)}$ is not feasible as well and will be excluded in the next iteration.
    \item If $X ^{(k)}$ is feasible, then for any point $X$ in $E ^{(k+1)\complement}\cap E ^{(k)} \cap \left\{X \left\lvert\right. AX=B\right\}$, from Lemma \ref{lemma: approximate subgradient} we know that $h _{S}(X)\ge \frac{1}{\kappa }h _{S}(X ^{(k)})+\mathcal{O}(X ^{(k)})^{\top } (X-X ^{(k)})\mathcal{O}(X ^{(k)})\ge \frac{1}{\kappa }h _{S}(X ^{(k)})$.
  \end{itemize}
  From Corollary \ref{corollary: convergence guarantee of ellipsoid method for problem with equality constraints}, we know that the volume of $E ^{(k)}\cap \left\{X \left\lvert\right. AX=B\right\}$ decreases exponentially with the factor $\frac{d ^{2 \tilde{d}}}{(d ^{2}+1)^{(\tilde{d}+1)/2}\cdot (d ^{2}-1)^{(\tilde{d}-1)/2}}<1$. Therefore, within $M:=c ^{\prime }\tilde{d}^{2}\cdot \ln \left(\frac{1}{\epsilon }\right)$ iterations, we have $\text{Vol}(E ^{(M)}\cap \left\{X \left\lvert\right. AX=B\right\})\le \text{Vol}(E _{\epsilon }\cap \left\{X \left\lvert\right. AX=B\right\})$, which means that there exists a feasible point, denoted by $X _{\text{e}}\in E _{\epsilon }$, is excluded along the iterations. Assume that it is excluded in $i$-th iteration. Then from the analysis above, we know that 
  \begin{equation*}
    h _{S}(X ^{\star })\le h _{S}(X _{M\text{-best}})\le h _{S}(X ^{(i)})\le \kappa h _{S}(X _{\text{e}})\le \kappa \left(h _{S}(X ^{\star })+\epsilon \right).
  \end{equation*}
  The proof is completed.
\end{proof}

\subsection{Proof of Theorem \ref{theorem: simple main results 1}}\label{subsection: proof of simple main results 1}

We start with the following simple fact.

\begin{fact*}
  For a hypothesis testing problem $H _{0}:\theta \in \Theta _{0}, H _{1}: \theta \in \Theta _{1}$ and a test statistic $M(\mathbf{Y})$. If there exists a threshold $t$ independent with $\theta $ such that 
  \begin{equation*}
    \begin{aligned}
      & \sup\limits_{\theta \in \Theta _{0}}\mathbb{P}_{\theta }(M(\mathbf{Y})\le t)\ge 1-\alpha ,\\ 
      & \sup\limits_{\theta \in \Theta _{1}}\mathbb{P}_{\theta }(M(\mathbf{Y})> t)\ge 1-\alpha ,
    \end{aligned}
  \end{equation*}
  then the test $\mathbf{1}_{\left\{M(\mathbf{Y})\le t\right\}}$ is valid with uniform Type I and Type II errors less than $\alpha $. As an application, if for a quantity $v _{\theta }$ depending on $\theta $ such that $\mathbb{P}_{\theta }(\left|M(\mathbf{Y})-\mathbb{E}_{\theta }(M(\mathbf{Y}))\right|\le v _{\theta })\ge 1-\alpha $ holds uniformly for $\theta \in \Theta _{1}\cup \Theta _{1}$ and $\sup\limits_{\theta \in \Theta _{0}}\mathbb{E}_{\theta }(M(\mathbf{Y}))+v _{\theta }\le \inf\limits_{\theta \in \Theta _{1}}\mathbb{E}_{\theta }(M(\mathbf{Y}))-v _{\theta }$ (or $\inf\limits_{\theta \in \Theta _{0}}\mathbb{E}_{\theta }(M(\mathbf{Y}))-v _{\theta }\ge \sup\limits_{\theta \in \Theta _{1}}\mathbb{E}_{\theta }(M(\mathbf{Y}))+v _{\theta }$), then there exists a valid testing achieving Type I and Type II errors less than $\alpha $ uniformly for $\theta \in \Theta _{0}\cup \Theta _{1}$.
\end{fact*}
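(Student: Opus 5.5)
The first claim is a direct unpacking of the definitions, and the second reduces to it through an explicit choice of threshold. Throughout I take the test to be the rejection indicator $\phi(\mathbf{Y})=\mathbf{1}_{\{M(\mathbf{Y})>t\}}$, so that $\mathbf{1}_{\{M(\mathbf{Y})\le t\}}$ is the acceptance event. I also read the two displayed conditions as holding uniformly over each hypothesis, i.e.\ with $\inf_{\theta\in\Theta_0}$ and $\inf_{\theta\in\Theta_1}$ in place of $\sup$. That uniform reading is the one under which the conclusion about uniform errors makes sense.

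For the first part, fix $\theta\in\Theta_0$. The Type I error is $\mathbb{P}_\theta(M(\mathbf{Y})>t)=1-\mathbb{P}_\theta(M(\mathbf{Y})\le t)\le\alpha$. Since $t$ does not depend on $\theta$, the bound is uniform over $\Theta_0$. Symmetrically, for $\theta\in\Theta_1$ the Type II error is $\mathbb{P}_\theta(M(\mathbf{Y})\le t)=1-\mathbb{P}_\theta(M(\mathbf{Y})>t)\le\alpha$. The only point that needs attention is exactly this independence of $t$ from $\theta$: one single test has to serve every parameter at once.

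For the application, suppose $\sup_{\theta\in\Theta_0}\big(\mathbb{E}_\theta M+v_\theta\big)\le\inf_{\theta\in\Theta_1}\big(\mathbb{E}_\theta M-v_\theta\big)$. Set $t:=\sup_{\theta\in\Theta_0}\big(\mathbb{E}_\theta M+v_\theta\big)$, which depends only on the model and not on the true $\theta$.

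Under $\theta\in\Theta_0$, the concentration event $|M-\mathbb{E}_\theta M|\le v_\theta$ has probability at least $1-\alpha$. On that event $M\le \mathbb{E}_\theta M+v_\theta\le t$, which is the first condition.

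Under $\theta\in\Theta_1$, on the same kind of event $M\ge \mathbb{E}_\theta M-v_\theta\ge t$. This gives only $M\ge t$, not the strict $M>t$. This boundary case is the one mild obstacle, and I would handle it in one of two ways. The first is to reject on $\{M\ge t\}$ instead, which changes nothing in the argument. The second is to assume a strict gap between the two sides and take $t$ to be the midpoint of $\sup_{\Theta_0}(\mathbb{E}_\theta M+v_\theta)$ and $\inf_{\Theta_1}(\mathbb{E}_\theta M-v_\theta)$. Since the paper uses the fact with an inequality $\gtrsim$ and a sufficiently large constant, a strict gap is always available there. Either way the first part then applies.

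The reversed ordering, $\inf_{\Theta_0}(\mathbb{E}_\theta M-v_\theta)\ge\sup_{\Theta_1}(\mathbb{E}_\theta M+v_\theta)$, follows by applying the same argument to $-M$ with threshold $-t$.
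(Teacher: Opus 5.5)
The paper states this Fact without proof, so your direct unpacking is the argument it implicitly relies on. Your corrections of the typos are the right ones: $\inf$ in place of $\sup$ in the two displayed conditions, $\mathbf{1}_{\{M(\mathbf{Y})\le t\}}$ read as the acceptance indicator, and $\Theta_1\cup\Theta_1$ read as $\Theta_0\cup\Theta_1$. The first part and the reduction of the reversed ordering to $-M$ are fine.

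The step that fails is your first remedy for the boundary case. Rejecting on $\{M\ge t\}$ with $t=\sup_{\Theta_0}(\mathbb{E}_\theta M+v_\theta)$ does change the argument. On the null concentration event you only know $M\le \mathbb{E}_\theta M+v_\theta\le t$, so the value $M=t$ is now rejected under $H_0$ and the Type I bound is lost. You have only moved the boundary problem from the alternative side to the null side.

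In fact no threshold or tie-breaking rule can rescue the non-strict version. Take $\mathbb{P}_\theta$ to be the same distribution for all $\theta$, with $\Theta_0=\{\theta_0\}$, $\Theta_1=\{\theta_1\}$, $M\equiv 0$ and $v_\theta\equiv 0$. Every hypothesis of the application holds, with equality in the ordering condition. Yet every test, randomized or not, has Type I error plus Type II error equal to $1$, so the conclusion is false for $\alpha<1/2$.

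So the strict gap $\sup_{\Theta_0}(\mathbb{E}_\theta M+v_\theta)<\inf_{\Theta_1}(\mathbb{E}_\theta M-v_\theta)$ is a necessary correction to the statement, not an optional convenience. With it, your midpoint remedy is correct. You do not even need the midpoint: $t=\sup_{\Theta_0}(\mathbb{E}_\theta M+v_\theta)$ with rejection on $\{M>t\}$ already works, because the alternative side then gives $M\ge\inf_{\Theta_1}(\mathbb{E}_\theta M-v_\theta)>t$.

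Your remark that the paper's applications impose $\gtrsim$ with a large constant, and so always supply such a gap, is correct. Drop the first remedy and state the strict-gap hypothesis explicitly as part of the Fact.
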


Before proceeding, we also briefly recall the celebrated Hanson--Wright inequality. This fundamental result provides sharp tail bounds for quadratic forms of sub-Gaussian random vectors, making it an indispensable tool for analyzing Gaussian observations with non-identity covariance structures. The original inequality was established by \cite{10.1214/aoms/1177693335}, while a modern and streamlined proof can be found in \cite{rudelson2013hansonwrightinequalitysubgaussianconcentration}.

\begin{lemma}[The Hanson-Wright inequality]\label{lemma: hw_inequality}
    Assume that $\mathbf{X}=(X _{1},\dots,X _{n})\in \mathbb{R}^{n}$ is a random vector with independent, zero-mean and sub-Gaussian coordinates. Let $A$ be an $n \times n$ matrix and $K=\max \limits _{i}\left\lVert X _{i}\right\rVert_{\psi _{2}}^{}$. Then for any $t \ge 0$, we have 
     \begin{equation}\label{eq: hw_inequality}
        \mathbb{P}\left(|X ^{\top } AX-\mathbb{E}X ^{\top } AX|>t\right)\le 2 \exp\left\{-c \min\limits \left\{\frac{t ^{2}}{K ^{4}\left\lVert A\right\rVert_{F}^{2}},\frac{t}{K ^{2}\left\lVert A\right\rVert_{2}^{}}\right\}\right\},
     \end{equation}
     where $\left\lVert \cdot \right\rVert_{F}^{}$ is the Frobenius norm of a matrix, and $\left\lVert \cdot \right\rVert_{2}^{}$ is the $\ell _{2}$ operator norm of a matrix.
\end{lemma}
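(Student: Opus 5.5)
The statement just above is the Hanson--Wright inequality (Lemma \ref{lemma: hw_inequality}). My plan follows the decoupling-plus-moment-generating-function route of Rudelson and Vershynin. By homogeneity I will assume $K=1$. I will then split $X^\top AX-\mathbb{E}X^\top AX$ into a diagonal part $\sum_i a_{ii}(X_i^2-\mathbb{E}X_i^2)$ and an off-diagonal part $\sum_{i\neq j}a_{ij}X_iX_j$. It suffices to prove the tail bound for each part separately with $t/2$, and a union bound then finishes the argument.

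\textbf{Diagonal part.} Each $X_i^2-\mathbb{E}X_i^2$ is a centered sub-exponential variable with $\psi_1$-norm $\lesssim 1$. Bernstein's inequality for sums of independent sub-exponential variables then gives a tail bound of
\begin{equation*}
2\exp\left\{-c\min\left\{\frac{t^2}{\sum_i a_{ii}^2},\frac{t}{\max_i|a_{ii}|}\right\}\right\}.
\end{equation*}
This is dominated by the claimed bound, since $\sum_i a_{ii}^2\le\left\lVert A\right\rVert_F^2$ and $\max_i|a_{ii}|\le\left\lVert A\right\rVert_2$.

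\textbf{Off-diagonal part.} This is the main obstacle, and I will attack it through the moment generating function. Let $S=\sum_{i\neq j}a_{ij}X_iX_j$. The first step is decoupling. With independent Bernoulli$(1/2)$ selectors $\delta_i$, an index set $\Lambda=\{i:\delta_i=1\}$, and Jensen's inequality, I obtain $\mathbb{E}e^{\lambda S}\le\mathbb{E}e^{4\lambda\sum_{i\in\Lambda,j\notin\Lambda}a_{ij}X_iX_j'}$, where $X'$ is an independent copy of $X$.

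The second step conditions on $X'$ and uses the sub-Gaussianity of $X_\Lambda$. Then I replace $X_\Lambda$ by a standard Gaussian $g$ via the standard comparison $\mathbb{E}e^{\lambda\langle X,v\rangle}\le e^{C\lambda^2\|v\|^2}=\mathbb{E}e^{\sqrt{2C}\lambda\langle g,v\rangle}$. Applying this twice, once for each side, reduces the problem to a Gaussian chaos $g^\top A_{\Lambda,\Lambda^c} g'$.

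The third step diagonalizes by rotation invariance, which gives $\sum_k s_k g_kg_k'$ with $s_k$ the singular values of $A_{\Lambda,\Lambda^c}$. Each factor $\mathbb{E}e^{\mu s_kg_kg_k'}=(1-\mu^2s_k^2)^{-1/2}\le e^{\mu^2s_k^2}$ whenever $|\mu| s_k\le 1/2$. This yields $\mathbb{E}e^{\lambda S}\le\exp\{C\lambda^2\left\lVert A\right\rVert_F^2\}$ for all $|\lambda|\le c/\left\lVert A\right\rVert_2$, because $\sum_k s_k^2\le\left\lVert A\right\rVert_F^2$ and $\max_k s_k\le\left\lVert A\right\rVert_2$.

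The last step is Markov's inequality applied to $e^{\lambda S}$, optimized over $\lambda$ in the admissible range. The choice $\lambda=\min\{t/(2C\left\lVert A\right\rVert_F^2),\,c/\left\lVert A\right\rVert_2\}$ produces the two regimes $\exp\{-c\,t^2/\left\lVert A\right\rVert_F^2\}$ and $\exp\{-c\,t/\left\lVert A\right\rVert_2\}$. Repeating the argument for $-S$ gives the two-sided bound. Restoring $K$ by scaling $X\mapsto X/K$ produces the $K^4$ and $K^2$ factors.

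Since this is a classical result, in the paper it suffices to cite \cite{rudelson2013hansonwrightinequalitysubgaussianconcentration}. The sketch above is the route I would reproduce if a self-contained argument were needed. The delicate points are tracking constants through the decoupling step and checking that the admissible range of $\lambda$ is preserved under the Gaussian replacement.
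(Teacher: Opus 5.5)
Your proposal is correct and is essentially the Rudelson--Vershynin argument, which is exactly what the paper relies on here: the paper gives no proof of its own and simply cites \cite{10.1214/aoms/1177693335} and \cite{rudelson2013hansonwrightinequalitysubgaussianconcentration}. Two bookkeeping points: the union bound over the diagonal and off-diagonal parts gives a prefactor $4$, which becomes $2$ after halving $c$ (the bound is vacuous whenever $2e^{-cx}\ge 1$); and the sub-Gaussian MGF comparison holds for every real $\lambda$, so the restriction $|\lambda|\le c/\lVert A\rVert_2$ enters only at the final Gaussian-chaos computation.
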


We are now ready for the proof of Theorem \ref{theorem: simple main results 1}.
\begin{proof}
  We analyze the term $\left\lVert Y\right\rVert_{2}^{2}$ directly. Since $Y=\mu +\zeta $, we have 
  \begin{equation*}
    \begin{aligned}
      &\left\lVert Y\right\rVert_{2}^{2}=\left\lVert \mu \right\rVert_{2}^{2}+2 \underbrace{\mu ^{\top } \zeta }_{\text{I}}+\underbrace{\left\lVert \zeta \right\rVert_{2}^{2}}_{\text{II}},\\ 
      &\mathbb{E}\left\lVert Y\right\rVert_{2}^{2}=\left\lVert \mu \right\rVert_{2}^{2}+k \sigma ^{2}.
    \end{aligned}
  \end{equation*}
  We bound the variation of the both terms I and II. Note that since $\zeta \sim \mathcal{N}(0,\Sigma )$, we can represent it as $\zeta =\Sigma ^{\frac{1}{2}}Z$ for $Z \sim \mathcal{N}(0,\mathbf{I}_{d})$.

  \textbf{Bound on I.} We have $\mu ^{\top } \zeta =\mu ^{\top } \Sigma ^{\frac{1}{2}}Z$. By standard properties of multivariate normal distribution and sub-Gaussian random variables, we know $\mu ^{\top } \zeta \sim \mathcal{N}(0,\mu ^{\top } \Sigma \mu )$ and therefore 
  \begin{equation*}
    \mathbb{P}\left(\left|\mu ^{\top } \zeta \right|\ge t\right)\le 2 \exp\left\{-c \frac{t ^{2}}{\mu ^{\top } \Sigma \mu }\right\}\overset{\text{(1)}}{\le } 2 \exp\left\{-c \frac{t ^{2}}{\sigma ^{2}\left\lVert \mu \right\rVert_{2}^{2}}\right\},
  \end{equation*}
  where (1) is from the condition that $\mathbf{0}\preceq \Sigma \preceq \sigma ^{2}\mathbf{I}_{d}$ and $c$ is a universal constant. Therefore, for $t \gtrsim \sigma \left\lVert \mu \right\rVert_{2}^{}$, we have 
  \begin{equation*}
    \mathbb{P}\left(\left|\mu ^{\top } \zeta \right|\ge t\right)=\mathcal{O}(1).
  \end{equation*}

  \textbf{Bound on II.} Since $\left\lVert \zeta \right\rVert_{2}^{2}=Z ^{\top } \Sigma Z$, by the Hanson-Wright's inequality \eqref{eq: hw_inequality}, we know that
  \begin{equation*}
    \mathbb{P}\left(\left|Z ^{\top } \Sigma Z-k \sigma ^{2}\right|\ge t\right)\le 2 \exp\left\{-c \min\limits \left\{\frac{t ^{2}}{\left\lVert \Sigma \right\rVert_{F}^{2}},\frac{t}{\left\lVert \Sigma \right\rVert_{2}^{}}\right\}\right\}.
  \end{equation*}
  From the condition that $\mathbf{0}\preceq \Sigma \preceq \sigma ^{2}\mathbf{I}_{d}$ and $\text{tr}(\Sigma )=k \sigma ^{2}$, we have 
  \begin{equation*}
    \begin{aligned}
      & \left\lVert \Sigma \right\rVert_{F}^{2}=\sigma ^{4}\sum\limits_{i=1}^{d}\lambda _{i}^{2}\le \sigma ^{4}  \sum\limits_{i=1}^{d}\lambda _{i}=k \sigma ^{4},\\ 
      & \left\lVert \Sigma \right\rVert_{2}^{}=\max\limits \left\{\left|\lambda _{i}\right|\right\}=\lambda _{\text{max}}\le \sigma ^{2}.
    \end{aligned}
  \end{equation*}
  Plug in the inequalities above, we have
  \begin{equation*}
    \mathbb{P}\left(\left|Z ^{\top } \Sigma Z-k \sigma ^{2}\right|\ge t\right)\le 2 \exp\left\{-c \min\limits \left\{\frac{t ^{2}}{k \sigma ^{4}},\frac{t}{\sigma ^{2}}\right\}\right\},
  \end{equation*}
  where $c$ is again a universal constant. Consequently, for $t \gtrsim \sqrt{k}\sigma ^{2}$, we have 
  \begin{equation*}
    \mathbb{P}\left(\left|Z ^{\top } \Sigma Z-k \sigma ^{2}\right|\ge t\right)=\mathcal{O}(1).
  \end{equation*}
  Finally, to ensure the effectiveness of the test \eqref{eq: chi_square test}, it suffices that the variations are asymptotically dominated by the mean:
  \begin{equation*}
    \begin{aligned}
      &\sigma \left\lVert \mu \right\rVert_{2}^{}\lesssim \left\lVert \mu \right\rVert_{2}^{2},\\ 
      &\sqrt{k}\sigma ^{2}\lesssim \left\lVert \mu \right\rVert_{2}^{2},
    \end{aligned}
  \end{equation*}
  which is equivalent to requiring that $\left\lVert \mu \right\rVert_{2}^{}\gtrsim k ^{\frac{1}{4}}\sigma $. The proof is completed.
\end{proof}

\subsection{Proof of Theorem \ref{theorem: raw main upper bound 1}}\label{subsection: proof of the raw main upper bound 1}
\begin{proof}
In this section, we provide the underlying rationale for Theorem \ref{theorem: raw main upper bound 1}, which is also the basis of Corollary \ref{corollary: main upper bound 1}. The central strategy mirrors the one employed in Appendix \ref{subsection: proof of simple main results 1}: ensuring that the unknown variations are asymptotically dominated by the known expectations. The crucial difference here is that this technique must be integrated with the definition of the \hyperref[def_consistent_subset_init]{consistent subset}.

As a preliminary step, we recall a classical estimate for binomial coefficients. This bound will be essential for controlling the number of subsets of $[N]$ with cardinality exceeding $(1-2\epsilon )N$ in our subsequent analysis.
\begin{lemma}[]\label{lemma: combination number}
    For any $0 \le k \le n$, we have $\sum\limits_{i=0}^{k}\binom{n}{i}\le \left(\frac{en}{k}\right)^{k}$.
\end{lemma}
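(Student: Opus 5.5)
We need to prove $\sum_{i=0}^{k}\binom{n}{i}\le (en/k)^k$ for $1\le k\le n$. The case $k=0$ gives the sum $1$, and the right-hand side is read by the convention $(en/k)^k\to 1$ as $k\to 0$, so it is trivial. The plan is the standard exponential-moment (Chernoff-type) argument.

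Fix $x\in(0,1]$. Because every term is nonnegative and $x^{i-k}\ge 1$ whenever $i\le k$,
\begin{equation*}
\sum_{i=0}^{k}\binom{n}{i}\le \sum_{i=0}^{k}\binom{n}{i}x^{i-k}\le x^{-k}\sum_{i=0}^{n}\binom{n}{i}x^{i}=x^{-k}(1+x)^{n}.
\end{equation*}
The binomial theorem gives the last equality. Next, apply $1+x\le e^{x}$ to obtain the bound $x^{-k}e^{nx}$.

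We then choose $x=k/n$, which lies in $(0,1]$ since $1\le k\le n$. This choice is the minimizer of $-k\ln x+nx$. It yields
\begin{equation*}
\sum_{i=0}^{k}\binom{n}{i}\le \left(\frac{n}{k}\right)^{k}e^{k}=\left(\frac{en}{k}\right)^{k},
\end{equation*}
which is the claim.

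There is no real obstacle. The only point requiring care is the restriction $x\le 1$: it is needed so that inserting the factor $x^{i-k}$ can only increase the truncated sum, and it holds precisely because $k\le n$.

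As an alternative route, one could bound each term by $\binom{n}{i}\le n^{i}/i!$ and then use $\sum_{i\le k} n^{i}/i!\le (n/k)^{k}\sum_{i\le k}k^{i}/i!\le (n/k)^{k}e^{k}$. The second inequality uses $n^{i}\le (n/k)^{k}k^{i}$ for $i\le k$, which is equivalent to $(n/k)^{i}\le (n/k)^{k}$. Either argument suffices; I would present the first.
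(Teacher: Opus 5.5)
Your proof is correct, and your primary route differs from the paper's. The paper's proof is exactly your ``alternative route.'' It establishes the termwise inequality $(k/n)^k\binom{n}{i}\le k^i/i!$ for $0\le i\le k$, using $\binom{n}{i}=\prod_{j=0}^{i-1}(n-j)/i!\le n^i/i!$ together with $(k/n)^{k-i}\le 1$. It then sums over $i$ and compares with the truncated Taylor series of $e^k$.

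Your main argument works differently. You weight the truncated sum by $x^{i-k}\ge 1$ and complete it to $(1+x)^n$ with the binomial theorem. You then minimize $x^{-k}e^{nx}$ at $x=k/n$, and the requirement $x\le 1$ is precisely where $k\le n$ enters.

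What each approach buys:
\begin{itemize}
\item The paper's termwise comparison is marginally more elementary. It gives a pointwise domination of each summand and needs no auxiliary parameter.
\item Your generating-function route is more flexible. Before invoking $1+x\le e^x$, you can minimize $x^{-k}(1+x)^n$ directly at $x=k/(n-k)$, which is admissible when $k\le n/2$. This yields the sharper entropy-type bound $\sum_{i\le k}\binom{n}{i}\le (n/k)^k\left(\frac{n}{n-k}\right)^{n-k}$, which the paper's comparison with $e^k$ does not give.
\end{itemize}

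For the union bounds over subsets used later in the paper, either version suffices.
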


\begin{proof}
    By the Taylor's expansion of $e ^{k}$, we know $e ^{k}=\sum\limits_{i=0}^{\infty }\frac{k ^{i}}{i!}\ge \sum\limits_{i=0}^{k}\frac{k ^{i}}{i!}$. Therefore, it suffices to show that $\frac{k ^{k}}{n ^{k}}\binom{n}{i}\le \frac{k ^{i}}{i!}$ for any $0 \le i \le k$, and this is equivalent to showing that $\frac{k ^{k}}{n ^{k}}\prod\limits_{j=0}^{i-1}(n-j)\le k ^{i}$. However, the last statement can be directly obtained from the fact that $\frac{k ^{k-i}}{n ^{k-i}}\le 1 \le \frac{n ^{i}}{\prod\limits_{j=0}^{i-1}(n-j)}$.
\end{proof}

Consider $E _{e}$ as in \eqref{eq: definition of the indicator event} for $S=S ^{\prime }$, where $S ^{\prime }$ can be any subset of $[N]$ with $\left|S \right|\ge (1-2\epsilon )N$. We first focus on the authentic samples (i.e., let $\mathbf{Y}=\tilde{\mathbf{Y}}$), and analyze the square term. By centering the samples, we have 
\begin{equation}\label{eq: decomposition of projection}
    \left\lVert A _{k}^{\dagger }\sum\limits_{i \in S ^{\prime }}^{}\tilde{Y} _{i}\right\rVert_{2}^{2}=\underbrace{\left\lVert A _{k}^{\dagger }\sum\limits_{i \in S ^{\prime }}^{}(\tilde{Y} _{i}-\mu )\right\rVert_{2}^{2}}_{:=\text{I}}+\underbrace{\left|S ^{\prime }\right|^{2}\left\lVert A _{k}^{\dagger }\mu \right\rVert_{2}^{2}}_{:=\text{II}}+2 \underbrace{\left|S ^{\prime }\right|\sum\limits_{i \in S ^{\prime }}^{}\left[A _{k}^{\dagger }(\tilde{Y} _{i}-\mu )\right]^{\top } \mu }_{:=\text{III}}.
\end{equation}
\textbf{Bound on I.} By representing $S ^{\prime }=[N]\backslash \left([N]\backslash S ^{\prime }\right)$, we have 
\begin{equation*}
    \text{I}=\underbrace{\left\lVert A _{k}^{\dagger }\sum\limits_{i \in [N]}^{}(\tilde{Y} _{i}-\mu )\right\rVert_{2}^{2}}_{:=\text{I}_{1}}+\underbrace{\left\lVert A _{k}^{\dagger }\sum\limits_{i \in [N] \backslash S ^{\prime }}^{}(\tilde{Y} _{i}-\mu )\right\rVert_{2}^{2}}_{:=\text{I}_{2,S ^{\prime }}}-2 \underbrace{\sum\limits_{i \in [N]}^{}(\tilde{Y} _{i}-\mu )^{\top } \left(\mathbf{I}_{d}-X _{k}^{\star }\right)\sum\limits_{i \in [N]\backslash S ^{\prime }}^{}(\tilde{Y} _{i}-\mu )}_{:=\text{I}_{3,S ^{\prime }}}.
\end{equation*}
For $\text{I}_{1}$, the arguments of concentration is from the Hanson-Wright inequality \ref{eq: hw_inequality}. Since $\mathbb{E}\text{I} _{1}=kN \sigma ^{2}\left\lVert \mathbf{I}_{d}-X _{k}^{\dagger }\right\rVert_{F}^{2}\le k,\left\lVert \mathbf{I}_{d}-X _{k}^{\dagger }\right\rVert_{2}^{}\le 1$. We know that 
\begin{equation*}
    \mathbb{P}\left(\left|\text{I}_{1}-kN \sigma ^{2}\right|\ge t\right)\le 2 \exp\left\{-c \min\limits \left\{\frac{t ^{2}}{k N ^{2}\sigma ^{4}},\frac{t}{N\sigma ^{2}}\right\}\right\}.
\end{equation*}
By adjusting $t$ to control the probability less than $\alpha $, we obtain
\begin{equation*}
  t \ge c\sqrt{k}\sigma ^{2}N
\end{equation*}
for some constant $c$ that only depends on $\alpha $.\npar

For $\text{I}_{2,S ^{\prime }}$, we combine the Hanson--Wright inequality with the union bound on $S ^{\prime }$. For any fixed $S ^{\prime }$ with $\left|S ^{\prime }\right|\ge (1-2\epsilon )N$, by the Hanson--Wright inequality \ref{lemma: hw_inequality} and the same conditions of $\mathbf{I}_{d}-A _{k}^{\dagger }$ as in the bound of $\text{I}_{1}$, we have 
\begin{equation*}
    \mathbb{P}\left(\left|\text{I} _{2}-k\left|[N]\backslash S ^{\prime }\right|\sigma ^{2}\right| \ge t\right)\le 2\exp\left\{-c \min\limits \left\{\frac{t ^{2}}{k \left|[N]\backslash S ^{\prime }\right|^{2}\sigma ^{4}},\frac{t}{\left|[N]\backslash S ^{\prime }\right|\sigma ^{2}}\right\}\right\}.
\end{equation*}
Now consider all such possible $S ^{\prime }$, whose amount is at most $\sum\limits_{i=0}^{2\epsilon N}\binom{N}{i}=\left(\frac{e}{2\epsilon }\right)^{2\epsilon N}$ by Lemma \ref{lemma: combination number}. Using the union bound, we have $\text{I} _{2,S ^{\prime }}-k\left|[N]\backslash S ^{\prime }\right|\sigma ^{2}\ge t$ holds for some $S ^{\prime }$ with probability less than
\begin{equation*}
    \sum\limits_{S ^{\prime }}^{}\exp\left\{-c \min\limits \left\{\frac{t ^{2}}{k\left|[N]\backslash S ^{\prime }\right|^{2}\sigma ^{4}},\frac{t}{\left|[N]\backslash S ^{\prime }\right|\sigma ^{2}}\right\}\right\}\le \left(\frac{e}{2\epsilon }\right)^{2\epsilon N}\cdot \exp\left\{-c \min\limits \left\{\frac{t ^{2}}{k\epsilon ^{2}N^{2}\sigma ^{4}},\frac{t}{\epsilon N\sigma ^{2}}\right\}\right\}.
\end{equation*}
If we select $t _{\text{I}_{2}}$ to be
\begin{equation*}
    t _{\text{I}_{2}}=c\max\limits \left\{\sqrt{\left[\epsilon N \ln \left(\frac{e}{\epsilon }\right)+\ln \left(\frac{1}{\alpha }\right)\right]k\epsilon ^{2}N ^{2}\sigma ^{4}},\left[\epsilon N \ln \left(\frac{1}{\epsilon }\right)+\ln \left(\frac{1}{\alpha }\right) \right]\epsilon N \sigma ^{2}\right\}
\end{equation*}
then it can be verified that $\left|\text{I} _{2,S ^{\prime }}-k \left|[N]\backslash S ^{\prime }\right|\sigma ^{2}\right|\le  t _{\text{I}_{2}}$ holds for all $S ^{\prime }$ with probability greater than $1-\alpha $.\npar

For $\text{I}_{3,S ^{\prime }}$, first, observe that the joint distribution of $\sum\limits_{i \in [N]\backslash S ^{\prime }}^{}(\tilde{Y} _{i}-\mu )$ and $\sum\limits_{i \in [N]}^{}(\tilde{Y} _{i}-\mu )$ is a multivariate Gaussian
\begin{equation*}
    \left(\sum\limits_{i \in [N]\backslash S ^{\prime }}^{}(\tilde{Y} _{i}-\mu )^{\top } ,\sum\limits_{i \in [N]}^{}(\tilde{Y} _{i}-\mu )^{\top } \right)\sim \mathcal{N}\left(\mathbf{0},\begin{pmatrix}
        \left|[N]\backslash S ^{\prime }\right| \mathbf{I}_{d} & \left|[N]\backslash S ^{\prime }\right| \mathbf{I}_{d}\\ 
        \left|[N]\backslash S ^{\prime }\right| \mathbf{I}_{d} & N \mathbf{I}_{d}
    \end{pmatrix}\sigma ^{2}\right).
\end{equation*}
By the property of multivariate Gaussian distribution, conditional on the event $\left\{\sum\limits_{i \in [N]}^{}(\tilde{Y} _{i}-\mu )=y\right\}$, the conditional distribution of $\sum\limits_{i \in [N]\backslash S ^{\prime }}^{}(\tilde{Y} _{i}-\mu )$ is
\begin{equation*}
    \left[\sum\limits_{i \in [N]\backslash S ^{\prime }}^{}(\tilde{Y} _{i}-\mu )\left\lvert\right. \sum\limits_{i \in [N]}^{}(\tilde{Y} _{i}-\mu )=y \right]\sim \mathcal{N}\left(\frac{\left|[N]\backslash S ^{\prime }\right|}{N}y,\sigma ^{2}\left(\left|[N]\backslash S ^{\prime }\right|-\frac{\left|[N]\backslash S ^{\prime }\right|^{2}}{N}\right)\mathbf{I}_{d}\right).
\end{equation*}
From the maximal inequality that $\mathbb{P}\left(\max\limits _{1 \le i \le N}\tilde{Y} _{i}\ge \sqrt{2 \sigma ^{2}(\ln N+t)}\right)\le e ^{-t}$, and conditional on the event $\left\{\sum\limits_{i \in [N]}^{}(\tilde{Y} _{i}-\mu )=y\right\}$, we have the probability of the following event 
\begin{equation*}
    \left\{\text{I}_{3,S ^{\prime }}\ge \frac{\left|[N]\backslash S ^{\prime }\right|}{N}y ^{\top } \left(\mathbf{I}_{d}-X _{k}^{\dagger }\right) y+\sqrt{2y ^{\top } \left(\mathbf{I}_{d}-X _{k}^{\dagger }\right) y \left(\left|[N]\backslash S ^{\prime }\right|-\frac{\left|[N]\backslash S ^{\prime }\right|^{2}}{N}\right)\sigma ^{2}\cdot \left(2\epsilon N\ln \left(\frac{e}{2\epsilon }\right)+t\right)}\right\}
\end{equation*}
holding for some $S ^{\prime }\subset [N]$ less than $e ^{-t}$. Also, for any general random variables $X$, $Y$, and $a \in \mathbb{R}$, we have
\begin{equation*}
    \begin{aligned}
        \mathbb{P}\left(Y \ge a\right)&=\mathbb{E}\mathbf{1}_{\left\{Y \ge a\right\}}\\ 
        &=\mathbb{E}\left[\mathbf{1}_{\left\{Y \ge a\right\}}\cdot \left(\mathbf{1}_{\left\{f(X)\ge a\right\}}+\mathbf{1}_{\left\{f(X)<a\right\}}\right)\right]\\ 
        &=\mathbb{E}\left[\mathbb{E}\left[\mathbf{1}_{\left\{Y \ge a\right\}}\cdot \mathbf{1}_{\left\{f(X)\ge a\right\}}\left|\right.X\right]+\mathbb{E}\left[\mathbf{1}_{\left\{Y \ge a\right\}}\cdot \mathbf{1}_{\left\{f(X)<a\right\}}\left|\right.X\right]\right]\\ 
        &\le \mathbb{E}\left[\mathbb{E}\left[\mathbf{1}_{\left\{f(X)\ge a\right\}}\left|\right.X\right]+\mathbb{E}\left[\mathbf{1}_{\left\{Y \ge f(X)\right\}}\cdot \mathbf{1}_{\left\{f(X)<a\right\}}\left|\right.X\right]\right]\\ 
        &\le \mathbb{P}\left(f(X)\ge a\right)+\mathbb{E}\left[\mathbb{P}\left(Y \ge f(X)\left|\right.X\right)\right].
    \end{aligned}
\end{equation*}
Now observe that $\left|[N]\backslash S ^{\prime }\right|\le 2\epsilon N$ for all $S ^{\prime }$, and take $Y=\max\limits _{S ^{\prime }}\left\{\text{I}_{3,S ^{\prime }}\right\}$, $X=\sum\limits_{i \in [N]}^{}(\tilde{Y} _{i}-\mu )$, $f(y)=g \left(y ^{\top } \left(\mathbf{I}_{d}-X _{k}^{\dagger }\right)y\right)$ where $g(y)=\frac{\left|[N]\backslash S ^{\prime }\right|}{N}y+\sqrt{4y \cdot \epsilon N \left(\epsilon N \ln \left(\frac{e}{\epsilon }\right)+t\right)}$ and $a=g(Nk ^{\star }\sigma ^{2}+\sqrt{\frac{4k ^{\star }}{\alpha }}N \sigma ^{2})$. By the argument above and the fact that $g(\cdot )$ is an increasing function on the positive real line, we know
\begin{equation*}
    \mathbb{P}\left(f(X)\ge a\right)=\mathbb{P}\left(y ^{\top } \left(\mathbf{I}_{d}-X _{k}^{\dagger }\right) y \ge Nk \sigma ^{2}+\sqrt{\frac{4k }{\alpha }}N \sigma ^{2}\right)\le \frac{\alpha }{2} .
\end{equation*}
From the maximal inequality above, we also have $\mathbb{E}\left[\mathbb{P}\left(Y \ge f(X)\left|\right.X\right)\right]\le \mathbb{E}e ^{-t}=e ^{-t}$. Taking $t=\ln \left(\frac{2}{\alpha }\right)$, and combining the results above, we conclude that with probability greater than $1-\alpha $ the following event holds for all $S ^{\prime }$.
\begin{equation*}
    \left\{\text{I}_{3,S ^{\prime }}\le \left|[N]\backslash S ^{\prime }\right|k \sigma ^{2}+\left|[N]\backslash S ^{\prime }\right|\sqrt{\frac{4k }{\alpha }}\sigma ^{2}+\sqrt{8 \left(Nk \sigma ^{2}+\sqrt{\frac{4k }{\alpha }}N \sigma ^{2}\right)\epsilon N \left(2\epsilon N \ln \left(\frac{e}{2\epsilon }\right)+\ln \left(\frac{2}{\alpha }\right)\right)\sigma ^{2}}\right\}.
\end{equation*}
By the symmetrical nature of $\text{I}_{3,S ^{\prime }}$, same technique and argument can also be applied to the probability of the lower tail of $\text{I}_{3,S ^{\prime }}$.

\textbf{Bound on II.} It is part of the expectation, and there is no randomness.

\textbf{Bound on III.} By spliting $S ^{\prime }=[N]\backslash \left([N]\backslash S ^{\prime }\right)$, we have
\begin{equation*}
    \text{III}=\underbrace{\left|S ^{\prime }\right|\sum\limits_{i \in [N]}^{}(\tilde{Y} _{i}-\mu )^{\top }A _{k}^{\dagger }\mu }_{\text{III}_{1,S ^{\prime }}}-\underbrace{\left|S ^{\prime }\right|\sum\limits_{i \in [N]\backslash S ^{\prime }}^{}(\tilde{Y} _{i}-\mu )^{\top } A _{k}^{\dagger }\mu }_{\text{III}_{2,S ^{\prime }}}.
\end{equation*}
For $\text{III}_{1,S ^{\prime }}$, observe that $\text{III}_{1,S ^{\prime }}\sim \mathcal{N}\left(0,\left|S ^{\prime }\right|^{2}N \sigma ^{2}\left\lVert A _{k}^{\dagger }\mu \right\rVert_{2}^{2}\right)$. By the Chebyshev's inequality and the fact $\left|S ^{\prime }\right|\le N$, we have
\begin{equation*}
    \mathbb{P}\left(\left|\text{III}_{1, S ^{\prime }}\right|\le \sqrt{\frac{N ^{3}\sigma ^{2}\left\lVert P ^{\star }\mu \right\rVert_{2}^{2}}{\alpha }}\right)\ge 1-\alpha .
\end{equation*}
For $\text{III}_{2,S ^{\prime }}$, similarly, we have $\text{III}_{2,S ^{\prime }}\sim \mathcal{N}\left(0, \left|S ^{\prime }\right|^{2} \left|[N]\backslash S ^{\prime }\right|\left\lVert P ^{\star }\mu \right\rVert_{2}^{2}\sigma ^{2}\right)$ for each fixed $S ^{\prime }$. By the Chebyshev's inequality and the union bound over $S ^{\prime }$ (similar argument as before), we know that the following holds.
\begin{equation*}
    \mathbb{P}\left(\left|\text{III}_{2,S ^{\prime }}\right|\le \sqrt{2\epsilon N ^{3}\left\lVert A _{k}^{\dagger }\mu \right\rVert_{2}^{2}\sigma ^{2}}\cdot \sqrt{2\epsilon N \ln \left(\frac{e}{2\epsilon }\right)+\ln \left(\frac{1}{\alpha }\right)},\forall S ^{\prime }\subset [N]\right)\ge 1-\alpha .
\end{equation*}

In summary, for each unknown term of I and III, we have the following expectations and concentrations on the variations.

\makebox[2.5em][l]{$\text{I}_{1}$}: \parbox[t]{0.9\textwidth}{
    Expectation: $kN \sigma ^{2}$,\\ 
    Variation: $\sqrt{\frac{2k }{\alpha }}N \sigma ^{2}$,\\ 
    Techniques: the Hanson--Wright inequality.
}
\npar

\makebox[2.5em][l]{$\text{I}_{2,S ^{\prime }}$}: \parbox[t]{0.9\textwidth}{
  Expectation: $k\left|[N]\backslash S ^{\prime }\right|\sigma ^{2}$,\\ 
  Variation: $c\max\limits \left\{\sqrt{\left[\epsilon N \ln \left(\frac{e}{\epsilon }\right)+\ln \left(\frac{1}{\alpha }\right)\right]k\epsilon ^{2}N ^{2}\sigma ^{4}},\left[\epsilon N \ln \left(\frac{1}{\epsilon }\right)+\ln \left(\frac{1}{\alpha }\right) \right]\epsilon N \sigma ^{2}\right\}$,\\ 
  Techniques: Lemma \ref{lemma: combination number}, union bound over $S ^{\prime }$, and the Hanson--Wright inequality.
}
\npar

\makebox[2.5em][l]{$\text{I}_{3,S ^{\prime }}$}: \parbox[t]{0.9\textwidth}{
  Expectation: $k\left|[N]\backslash S ^{\prime }\right|\sigma ^{2}$,\\ 
  Variation: $\left[\left|[N]\backslash S ^{\prime }\right|\sqrt{\frac{4k}{\alpha }}+\sqrt{8 \left(Nk+\sqrt{\frac{4k}{\alpha }}N \right)\epsilon N \left(2\epsilon N \ln \left(\frac{e}{2\epsilon }\right)+\ln \left(\frac{2}{\alpha }\right)\right)}\right]\sigma ^{2} $,\\ 
  Techniques: conditional argument and (sub-)Gaussian maximal inequality.
}
\npar

\makebox[2.5em][l]{$\text{III}_{1,S ^{\prime }}$}: \parbox[t]{0.9\textwidth}{
  Expectation: $0$,\\ 
  Variation: $\sqrt{\frac{N ^{3}\sigma ^{2}\left\lVert A _{k}^{\dagger }\mu \right\rVert_{2}^{2}}{\alpha }}$,\\ 
  Techniques: Chebyshev's inequality.
}
\npar

\makebox[2.5em][l]{$\text{III}_{2,S ^{\prime }}$}: \parbox[t]{0.9\textwidth}{
  Expectation: $0$,\\ 
  Variation: $\sqrt{2\epsilon N ^{3}\left\lVert A _{k}^{\dagger }\mu \right\rVert_{2}^{2}\sigma ^{2}}\cdot \sqrt{2\epsilon N \ln \left(\frac{e}{2\epsilon }\right)}$,\\ 
  Techniques: Lemma \ref{lemma: combination number}, union bound over $S ^{\prime }$ and Chebyshev's inequality.
}
\npar

The sum of the expectations for \eqref{eq: decomposition of projection} is exactly $k \left|S ^{\prime }\right|\sigma ^{2}$, which is the second term in the LHS. It remains is to require all the variations above to be asymptotically less than the term II, which is equivalent to the condition $\left\lVert A ^{\dagger }_{k}\mu \right\rVert_{2}^{}\gtrsim E _{\text{raw}}(\epsilon ,K,N,\sigma ^{2},k)$ with simple algebra.

Now consider the general corrupted observations $\mathbf{Y}$. From the arguments above, we know that the original samples $\tilde{\mathbf{Y}}$ is consistent with $\phi _{e}$ with high (constant) probability given the condition \eqref{eq: main upper bound 1}. Combining with the simple fact that if $\tilde{\mathbf{Y}}$ is consistent, then $\tilde{\mathbf{Y}}_{S}$ is consistent for any $S \subset [N]$ with $\left|S\right|\ge (1-\epsilon )N$, we have, therefore, with high probability, there exists at least one consistent subset within $\mathbf{Y}$ (at least $\mathbf{Y}_{[N]\backslash C}=\tilde{\mathbf{Y}}_{[N]\backslash C}$) for $\phi _{e}$. The theoretical algorithm works by scanning through all the subsets of $\mathbf{Y}$ with cardinality no less than $(1-\epsilon )N$ to check their consistency. Denote any found consistent subset as $Y _{S _{0}}$. Then the testing result on $\tilde{\mathbf{Y}}$ is recovered through the connection that $\phi _{e}(\mathbf{Y}_{S _{0}})=\phi _{e}(\mathbf{Y}_{S _{0} \cap [N]\backslash C})=\phi _{e}(\mathbf{Y}_{[N]\backslash C})=\phi _{e}(\tilde{\mathbf{Y}})$ and the fact that $\left[S _{0}\cap [N]\backslash C\right]\ge (1-2\epsilon )N$. Since $\phi _{e}$ itself is a valid test for the uncontaminated testing problem, the proof is completed.
\end{proof}

\subsection{Necessary Additional Concentration Inequalities for Theorem \ref{theorem: raw main upper bound 2}}\label{subsection: necessary concentration inequality for the main upper bound 2}
\begin{lemma}[]\label{lemma: bounds on the l2-operator norm of X^TX and XX^T}
    Assume $\mathbf{X} \in \mathbb{R}^{n \times d}$ and each row is independently drawn from $\mathcal{N}(\mathbf{0},\mathbf{\Sigma })$ with $\mathbf{0}\preceq \mathbf{\Sigma }\preceq \mathbf{I}_{d}$ and $\text{tr}(\mathbf{\Sigma })=k \le d$, then for $\delta >0$, with probability higher than $1-2\delta $, we have:\\ 
    \begin{tabular}{rl}
        (i), $n \ge k$: & $\left\lVert \mathbf{X} ^{\top }\mathbf{X}-n \mathbf{\Sigma }\right\rVert_{2}^{}\lesssim \sqrt{nk}+\sqrt{n \ln \left(\frac{1}{\delta }\right)}+\ln \left(\frac{1}{\delta }\right)$, \\ 
        (ii), $n <k$: & $\left\lVert \mathbf{X}\mathbf{X} ^{\top } -k \mathbf{I}_{n}\right\rVert_{2}^{}\lesssim \sqrt{nk}+\sqrt{k \ln \left(\frac{1}{\delta }\right)}+\ln \left(\frac{1}{\delta }\right)$.
    \end{tabular}
\end{lemma}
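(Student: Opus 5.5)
Write $\mathbf{X}=\mathbf{Z}\mathbf{\Sigma}^{1/2}$, where $\mathbf{Z}\in\mathbb{R}^{n\times d}$ has i.i.d. $\mathcal{N}(0,1)$ entries. Let the eigenvalues of $\mathbf{\Sigma}$ be $\lambda_1\ge\dots\ge\lambda_d$, with $0\le\lambda_j\le 1$ and $\sum_j\lambda_j=k$. The constraints give the two facts used throughout: $\|\mathbf{\Sigma}\|_2\le 1$ and $\|\mathbf{\Sigma}\|_F^2\le \text{tr}(\mathbf{\Sigma})=k$, so the effective rank is at most $k$. Both parts will follow from an $\varepsilon$-net argument combined with the Hanson--Wright inequality (Lemma \ref{lemma: hw_inequality}). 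The tail exponents are governed by $\|\mathbf{\Sigma}\|_F$ and $\|\mathbf{\Sigma}\|_2$, exactly as in the proof of Theorem \ref{theorem: simple main results 1}.

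\textbf{Part (i), $n\ge k$.} Fix a unit $u\in\mathbb{R}^d$ and set $a=\mathbf{\Sigma}^{1/2}u$, so $\|a\|_2\le 1$. Then $u^\top(\mathbf{X}^\top\mathbf{X}-n\mathbf{\Sigma})u=\sum_{i=1}^n\left[(z_i^\top a)^2-\|a\|_2^2\right]$, a centered sum of $n$ independent sub-exponential variables with $\psi_1$-norm $\lesssim 1$. Bernstein gives a tail $\exp\{-c\min(t^2/n,t)\}$.

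A naive net over $S^{d-1}$ would cost $e^{cd}$, which is too much, since we want $k$ in place of $d$. To avoid this, split the spectrum of $\mathbf{\Sigma}$:
\begin{itemize}
\item the top part $V$, the span of eigenvectors with $\lambda_j\ge 1/\sqrt{n}$ (or another threshold), has dimension at most $\sqrt{n}\,k$;
\item the tail part, where all eigenvalues are at most the threshold.
\end{itemize}
This split is not quite dimension-free, so a cleaner route is to use a matrix Bernstein / effective-rank result directly, namely the Koltchinskii--Lounici bound. It gives
\begin{equation*}
\mathbb{E}\left\|\tfrac1n\mathbf{X}^\top\mathbf{X}-\mathbf{\Sigma}\right\|_2\lesssim\|\mathbf{\Sigma}\|_2\left(\sqrt{r/n}+r/n\right),
\end{equation*}
with $r=\text{tr}(\mathbf{\Sigma})/\|\mathbf{\Sigma}\|_2$, together with a deviation term of order $\|\mathbf{\Sigma}\|_2\left(\sqrt{\ln(1/\delta)/n}+\ln(1/\delta)/n\right)$. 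Since $\|\mathbf{\Sigma}\|_2\,r=k$ and $\|\mathbf{\Sigma}\|_2\le1$, multiplying by $n$ yields
\begin{equation*}
\sqrt{nk}+k+\sqrt{n\ln(1/\delta)}+\ln(1/\delta),
\end{equation*}
and $k\le\sqrt{nk}$ because $n\ge k$. If a self-contained argument is preferred, I would run a Gaussian comparison (Gordon/Slepian) on the process $\sup_u\|\mathbf{Z}\mathbf{\Sigma}^{1/2}u\|_2$. Its mean is controlled by $\sqrt{n}+\sqrt{\text{tr}\,\mathbf{\Sigma}}$ via Chevet's inequality, and Gaussian Lipschitz concentration (Lipschitz constant $\|\mathbf{\Sigma}\|_2^{1/2}\le1$) supplies the $\sqrt{\ln(1/\delta)}$ terms. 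Squaring gives $s_{\max}^2\le n+2\sqrt{nk}+k+\dots$. The lower-side bound $s_{\min}$ restricted to the relevant directions is handled symmetrically.

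\textbf{Part (ii), $n<k$.} Here $\mathbf{X}\mathbf{X}^\top=\mathbf{Z}\mathbf{\Sigma}\mathbf{Z}^\top$, and $\mathbb{E}\,\mathbf{X}\mathbf{X}^\top=k\mathbf{I}_n$. Take a $1/4$-net $\mathcal{N}$ of $S^{n-1}$ with $|\mathcal{N}|\le 9^n$. For fixed $v$, write $w=\mathbf{Z}^\top v\sim\mathcal{N}(0,\mathbf{I}_d)$; then $v^\top\mathbf{X}\mathbf{X}^\top v=w^\top\mathbf{\Sigma}w$. Hanson--Wright gives the tail $2\exp\{-c\min(t^2/k,\,t)\}$. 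Choosing
\begin{equation*}
t\asymp\sqrt{k\left(n+\ln(1/\delta)\right)}+n+\ln(1/\delta)
\end{equation*}
and taking a union bound over the net, the standard net argument (sup over the sphere $\le 2\,\times$ sup over the net) gives
\begin{equation*}
\|\mathbf{X}\mathbf{X}^\top-k\mathbf{I}_n\|_2\lesssim\sqrt{nk}+\sqrt{k\ln(1/\delta)}+n+\ln(1/\delta),
\end{equation*}
and $n<k$ lets us absorb $n$ into $\sqrt{nk}$.

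\textbf{Main obstacle.} The main obstacle is part (i): obtaining a dependence on $k=\text{tr}(\mathbf{\Sigma})$ rather than on $d$. A net over the ambient sphere fails, so the proof must rest on an effective-rank (Koltchinskii--Lounici or Chevet/Gordon) bound. I would invoke that result as a black box. Each part holds with probability at least $1-\delta$, which is where the $1-2\delta$ in the statement comes from.
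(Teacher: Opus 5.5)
Your proposal matches the paper's proof: part (i) invokes the Koltchinskii--Lounici effective-rank covariance bound as a black box (the paper cites their Corollary 2), and part (ii) applies Hanson--Wright to $w^\top\mathbf{\Sigma}w$ with $w=\mathbf{Z}^\top v\sim\mathcal{N}(\mathbf{0},\mathbf{I}_d)$, takes a union bound over a $1/4$-net of $\mathbb{S}^{n-1}$ of size $9^n$, and absorbs $n$ into $\sqrt{nk}$ using $n<k$. One caveat concerns your optional Gordon/Chevet fallback, which you do not end up needing: as sketched it bounds only $s_{\max}(\mathbf{Z}\mathbf{\Sigma}^{1/2})$, and that does not control $\|\mathbf{X}u\|_2^2-n\,u^\top\mathbf{\Sigma}u$ uniformly in $u$ when $\mathbf{\Sigma}$ is anisotropic, so that route would need the two-sided deviation bound for $\sup_u|\|\mathbf{Z}\mathbf{\Sigma}^{1/2}u\|_2-\sqrt{n}\|\mathbf{\Sigma}^{1/2}u\|_2|$.
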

\begin{proof}
  For the concentration bound on the sample covariance matrix, the proof involves the Talagrand generic chaining method and Talagrand's majorizing measure theorem. We kindly refer the reader to Corollary 2 of \cite{9d45c567-9b1f-32ff-a930-b800c17fe77f} for such results and proof. Here we only prove (ii) via the Hanson--Wright inequality and standard $\epsilon $-net argument.

  From the definition of $\ell _{2}$ operator norm, we know 
  \begin{equation*}
    \left\lVert \mathbf{X}\mathbf{X}^{\top } -k \mathbf{I}_{n}\right\rVert_{2}^{}=\max\limits _{u \in \mathbb{S}^{n-1}}\left|\left\lVert \mathbf{X}^{\top } u\right\rVert_{2}^{2}-k\right|.
  \end{equation*}
  Since $\left\lVert u\right\rVert_{2}^{}=1$, we know $\mathbf{X}^{\top } u$ is also a Gaussian random variable with distribution $\mathcal{N}(\mathbf{0},\mathbf{\Sigma })$. Let $\mathbf{X}^{\top } =\sqrt{\mathbf{\Sigma }}\mathbf{Z}$ where $\mathbf{Z}\sim \mathcal{N}(\mathbf{0},\mathbf{I}_{d})$. For any fixed $u \in \mathbb{S}^{n-1}$, by the Hanson--Wright inequality, we have 
  \begin{equation*}
    \mathbb{P}\left(\left|\left\lVert \mathbf{X}^{\top } u\right\rVert_{2}^{2}-k\right|\ge t\right)=\mathbb{P}\left(\left|\mathbf{Z}^{\top } \mathbf{\Sigma }\mathbf{Z}-k\right|\ge t\right)\overset{\text{(i)}}{\le }2 \exp\left\{-c \min\limits \left\{\frac{t ^{2}}{k},t\right\}\right\},
  \end{equation*}
  where (i) is from the fact that $\mathbf{0}\preceq \mathbf{\Sigma }\preceq \mathbf{I}_{d}$ and therefore $\left\lVert \mathbf{\Sigma }\right\rVert_{F}^{2}\le \left\lVert \mathbf{\Sigma }\right\rVert_{F}^{}=k$, $\left\lVert \mathbf{\Sigma }\right\rVert_{2}^{}\le 1$.

  For $\mathbb{S}^{n-1}$, from standard $\epsilon $-net argument, we know that there exists a set $\mathcal{S}=\left\{v _{1},v _{2},\dots\right\}$ with $\left|\mathcal{S}\right|\le 9 ^{n}$ and $\left\lVert u-v _{i}\right\rVert_{2}^{}\le \frac{1}{4}$ for any $u \in \mathbb{S}^{n-1}$ and some $v _{i}\in \mathcal{S}$. Denote $Q=\mathbf{X}\mathbf{X}^{\top } -k \mathbf{I}_{n}$, then for any $u \in \mathbb{S}^{n-1}$ and $v _{u}\in \mathcal{S}$ such that $\left\lVert u-v _{u}\right\rVert_{2}^{}\le \frac{1}{4}$, we have 
  \begin{equation*}
    \begin{aligned}
      \left|u ^{\top } Qu-v _{u}^{\top } Qv _{u}\right|&=\left|u ^{\top } Q(u-v _{u})+(u-v _{u})^{\top } Qv _{u}\right|\\ 
      & \le \left|u ^{\top } Q(u-v _{u})\right|+\left|(u-v _{u})^{\top } Qv _{u}\right|\\ 
      &\le \left\lVert u\right\rVert_{2}^{}\left\lVert Q\right\rVert_{2}^{}\left\lVert u-v _{u}\right\rVert_{2}^{}+\left\lVert u-v _{u}\right\rVert_{2}^{}\left\lVert M\right\rVert_{2}^{}\left\lVert v _{u}\right\rVert_{2}^{}\\ 
      &\le \frac{1}{2}\left\lVert Q\right\rVert_{2}^{}.
    \end{aligned}
  \end{equation*}
  Therefore, we have 
  \begin{equation*}
    \left\lVert u ^{\top } Qu\right\rVert_{2}^{}\le \left| v _{u}^{\top } Qv _{u}\right|_{2}^{}+\frac{1}{2}\left\lVert Q\right\rVert_{2}^{}\le \sup\limits_{v \in \mathcal{S}}\left|v ^{\top } Qv\right|+\frac{1}{2}\left\lVert Q\right\rVert_{2}^{}.
  \end{equation*}
  Taking supreme over $u$, we have 
  \begin{equation*}
    \left\lVert Q\right\rVert_{2}^{}\le 2 \sup\limits_{v \in \mathcal{S}}\left|v ^{\top } Qv\right|.
  \end{equation*}
  Finally, we have 
  \begin{equation*}
    \mathbb{P}\left(\left\lVert Q\right\rVert_{2}^{}\ge t\right)\le \mathbb{P}\left(\sup\limits_{v \in \mathcal{S}}\left|v ^{\top } Qv\right|\ge \frac{t}{2}\right)\le 9 ^{n}\cdot 2 \exp\left\{-c \min\limits \left\{\frac{t ^{2}}{k},t\right\}\right\}.
  \end{equation*}
  Simple calculations and the condition that $n<k$ yield that with probability greater than $1-2\delta $ we have 
  \begin{equation*}
    \left\lVert Q\right\rVert_{2}^{}\lesssim \sqrt{kn}+\sqrt{k \ln \left(\frac{1}{\delta }\right)}+\ln \left(\frac{1}{\delta }\right).
  \end{equation*}
  The proof is completed.
\end{proof}

Distinct from Lemma \ref{lemma: bounds on the l2-operator norm of X^TX and XX^T}, Lemma \ref{lemma: bound on the max eigenvalue of the covariance matrix} below bounds the $\ell _{2}$-operator norm of the weighted empirical covariance $\sum\limits_{i=1}^{n}\omega _{i}X _{i}X _{i}^{\top }$ under small weights $\omega $. This result plays a crucial role in verifying the second condition of \hyperref[def_omega_regularity_init]{$\omega $-regularity} subsequent to Algorithm \ref{algorithm n > k} (or \ref{algorithm n <= k}). While the proof builds upon Fact 4.2 in \cite{dong2019quantumentropyscoringfast}, our analysis is carefully tailored to handle the specific structural assumptions imposed on $\mathbf{\Sigma }$.

\begin{lemma}[]\label{lemma: bound on the max eigenvalue of the covariance matrix}
    Assume $\mathbf{X} \in \mathbb{R}^{n \times d}$ and each row is independently drawn from $\mathcal{N}(0,\mathbf{\Sigma })$ where $\mathbf{0}\preceq \mathbf{\Sigma }\preceq \mathbf{I}_{d}$ and $\text{tr}(\mathbf{\Sigma })=k$. Then, with some constants $c _{1},c _{2}>0$, the following holds with probability at least $1-\delta $ for any $\omega $ that $0 \le \omega _{i}\le 1$ and $\left\lVert \omega \right\rVert_{1}^{}\le c _{2}\cdot \epsilon n$:
    \begin{equation*}
        \left\lVert \sum\limits_{i=1}^{n}\omega _{i}X _{i}X _{i}^{\top } \right\rVert_{2}^{}\le c _{1}\left(\epsilon n \ln \left(\frac{1}{\epsilon }\right)+k+\ln \left(\frac{1}{\delta }\right)\right).
    \end{equation*}
\end{lemma}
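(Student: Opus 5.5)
Since the weights enter linearly and the constraint set $\{\omega : 0\le\omega_i\le 1, \|\omega\|_1\le c_2\epsilon n\}$ is a polytope, the map $\omega\mapsto\|\sum_i\omega_iX_iX_i^\top\|_2$ is convex. Its maximum is therefore attained at an extreme point, namely an indicator $\mathbf{1}_S$ of a subset $S\subset[n]$ with $|S|\le m:=\lceil c_2\epsilon n\rceil$. The claim thus reduces to a uniform bound over all such subsets:
\begin{equation*}
\max_{|S|\le m}\sup_{u\in\mathbb{S}^{d-1}}\sum_{i\in S}\langle u,X_i\rangle^{2}\lesssim \epsilon n\ln\left(\frac{1}{\epsilon}\right)+k+\ln\left(\frac{1}{\delta}\right).
\end{equation*}
Writing $X_i=\mathbf{\Sigma}^{1/2}Z_i$ with $Z_i\sim\mathcal{N}(\mathbf{0},\mathbf{I}_d)$, we have $\sum_{i\in S}X_iX_i^\top=\mathbf{\Sigma}^{1/2}Z_S^\top Z_S\mathbf{\Sigma}^{1/2}$. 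Its operator norm equals $\|Z_S\mathbf{\Sigma}^{1/2}\|_2^2=\|\mathbf{\Sigma}^{1/2}Z_S^\top\|_2^2$, the squared top singular value of the $|S|\times d$ matrix $G_S:=Z_S\mathbf{\Sigma}^{1/2}$.

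For a fixed $S$, bound $\|G_S\|_2=\sup_{a\in\mathbb{S}^{|S|-1}}\|\mathbf{\Sigma}^{1/2}Z_S^\top a\|_2$. The map $Z_S\mapsto\|G_S\|_2$ is $\|\mathbf{\Sigma}^{1/2}\|_2\le1$-Lipschitz in Frobenius norm, so Gaussian concentration gives $\|G_S\|_2\le\mathbb{E}\|G_S\|_2+t$ with probability at least $1-e^{-t^2/2}$. For the mean, apply Gordon/Chevet (or Slepian) to the process $\langle a,Z_S\mathbf{\Sigma}^{1/2}b\rangle$ to get
\begin{equation*}
\mathbb{E}\|G_S\|_2\le\|\mathbf{\Sigma}^{1/2}\|_2\sqrt{|S|}+\|\mathbf{\Sigma}^{1/2}\|_F\le\sqrt{m}+\sqrt{k},
\end{equation*}
where $\|\mathbf{\Sigma}^{1/2}\|_F^2=\mathrm{tr}(\mathbf{\Sigma})=k$. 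Equivalently, one could run the $\frac14$-net argument of Lemma \ref{lemma: bounds on the l2-operator norm of X^TX and XX^T} on $a\in\mathbb{S}^{|S|-1}$, using Hanson--Wright exactly as there with $\|\mathbf{\Sigma}\|_F^2\le k$ and $\|\mathbf{\Sigma}\|_2\le1$. This costs a $9^{m}$ union factor, which is absorbed into the same budget as the subset union below.

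Take a union bound over the at most $(en/m)^m$ subsets, counted via Lemma \ref{lemma: combination number}. It suffices to choose $t^2\asymp m\ln(en/m)+\ln(1/\delta)\asymp\epsilon n\ln(1/\epsilon)+\ln(1/\delta)$. Then with probability at least $1-\delta$, simultaneously for all $|S|\le m$,
\begin{equation*}
\|G_S\|_2^2\le 3\left(m+k+t^2\right)\lesssim \epsilon n\ln\left(\frac{1}{\epsilon}\right)+k+\ln\left(\frac{1}{\delta}\right),
\end{equation*}
which is the claimed bound.

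The main obstacle is keeping the dimension dependence at $k$ rather than $d$. A naive net over $u\in\mathbb{S}^{d-1}$ would cost $e^{cd}$. This is why the plan nets only the $|S|$-dimensional side, or uses Gordon's inequality, and lets $\mathrm{tr}(\mathbf{\Sigma})=k$ enter through $\|\mathbf{\Sigma}^{1/2}\|_F$. The remaining step to check is the reduction to vertices. Since only $\|\omega\|_1\le c_2\epsilon n$ is imposed, the extreme points with fractional coordinates have at most one fractional entry, and they are dominated by indicators of sets of size at most $m$ by monotonicity in $\omega$ (all $X_iX_i^\top\succeq0$).
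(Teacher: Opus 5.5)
Your proof is correct. It shares the paper's skeleton, but the per-subset concentration step uses a different key tool.

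\textbf{Shared skeleton.} Both arguments reduce to indicator weights $\mathbf{1}_S$ and then take a union bound over the at most $(e/\epsilon)^{\epsilon n}$ subsets counted by Lemma \ref{lemma: combination number}. The paper does the reduction for each fixed direction $v$, where the objective is linear in $\omega$. You do it at the matrix level, via convexity plus Loewner monotonicity.

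\textbf{Where the routes differ.} The paper centers the sum, writing $\sum_{i\in S}(v^\top X_i)^2\le\|\sum_{i\in S}X_iX_i^\top-\epsilon n\mathbf{\Sigma}\|_2+\epsilon n$. It then imports the sample-covariance bound $\sqrt{\epsilon nk}+k+\sqrt{\epsilon nt}+t$, whose proof rests on generic chaining and the majorizing measure theorem. You instead bound the uncentered quantity $\|Z_S\mathbf{\Sigma}^{1/2}\|_2^2$ directly:
\begin{itemize}
\item Gaussian Lipschitz concentration controls the fluctuation.
\item Chevet/Gordon, $\mathbb{E}\|AGB\|_2\le\|A\|_2\|B\|_F+\|A\|_F\|B\|_2$, controls the mean.
\item Alternatively, a $\frac14$-net on the $|S|$-dimensional side with Hanson--Wright also works; its $9^m$ cost is absorbed by the subset union bound, as you say.
\end{itemize}

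\textbf{What each route buys.} Only an upper bound on the uncentered norm is needed, so the paper's centered deviation bound is more than required and your route loses nothing. It is also more elementary and self-contained. The trace enters through $\|\mathbf{\Sigma}^{1/2}\|_F=\sqrt{k}$, playing the same role as the effective rank in the paper's cited bound. You also make the union-bound bookkeeping explicit ($t^2\asymp m\ln(en/m)+\ln(1/\delta)$), which the paper garbles when it writes ``$t=\delta\cdot(\epsilon/e)^{\epsilon n}$''.

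\textbf{A caveat shared by both arguments.} The final simplification $m\ln(en/m)\lesssim\epsilon n\ln(1/\epsilon)$ with $m=\lceil c_2\epsilon n\rceil$ needs $c_2\epsilon n\gtrsim 1$. If $c_2\epsilon n<1$, rounding the single fractional weight up to $1$ produces a spurious $\ln n$ term. The paper makes the same implicit assumption when it takes vertices $\mathbf{1}_S$ with $|S|=\epsilon n$.
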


\begin{proof}
  Without loss of generality, we assume $c _{2}=1$. For any fixed $v \in \mathbb{S}^{d-1}$, We have:
  \begin{equation*}
    v ^{\top } \left(\sum\limits _{i=1}^{n}\omega _{i}X _{i}X _{i}^{\top }\right)v=\sum\limits _{i=1}^{n}\omega _{i}(v ^{\top } X _{i})^{2}.
  \end{equation*}
  Since $(v ^{\top } X _{i})^{2}\ge 0$, we further assume $\left\lVert \omega \right\rVert_{1}^{}=\epsilon n$. The set $\left\{\omega \left\lvert\right. \left\lVert \omega \right\rVert_{1}^{}=\epsilon n\right\}$ is convex, consequently the RHS above is maximized at some vertex $\omega _{S}:=\mathbf{1}_{S},\left|S\right|=\epsilon n$. Therefore, we have 
  \begin{equation*}
    \sum\limits _{i=1}^{n}\omega _{i}(v ^{\top } X _{i})^{2}=\sum\limits _{i \in S}^{}\left[(v ^{\top } X _{i})^{2}-v ^{\top } \mathbf{\Sigma }v\right]+\epsilon n(v ^{\top } \mathbf{\Sigma }v)\le \left\lVert \sum\limits _{i \in S}^{}X _{i}X _{i}^{\top } -\epsilon n \mathbf{\Sigma }\right\rVert_{2}^{}+\epsilon n.
  \end{equation*}
  From Corollary 2 of \cite{9d45c567-9b1f-32ff-a930-b800c17fe77f} (also see Lemma \ref{lemma: bounds on the l2-operator norm of X^TX and XX^T}), for any fixed $S$ with $\left|S\right|=\epsilon n$, we have 
  \begin{equation*}
    \left\lVert \sum\limits _{i \in S}^{}X _{i}X _{i}^{\top } -\epsilon n \mathbf{\Sigma }\right\rVert_{2}^{}\lesssim \sqrt{\epsilon nk}+k+\sqrt{\epsilon n t}+t
  \end{equation*}
  with probability greater than $1-e ^{-t}$. From Lemma \ref{lemma: combination number}, the total amount of such $S$ is at most $\left(\frac{e}{\epsilon }\right)^{\epsilon n}$. Setting $t=\delta \cdot \left(\frac{\epsilon }{e}\right)^{\epsilon n}$ in the inequality above and using the union bound, we have:
  \begin{equation*}
    \mathbb{P}\left(\left\lVert \sum\limits _{i \in S}^{}X _{i}X _{i}^{\top } -\epsilon n \mathbf{\Sigma }\right\rVert_{2}^{}\lesssim \sqrt{\epsilon nk}+k+\sqrt{\epsilon n \ln \left(\frac{1}{\delta }\right)}+\ln \left(\frac{1}{\delta }\right)+\epsilon n \ln \left(\frac{1}{\epsilon }\right), \forall S, \left|S\right|=\epsilon n\right)\ge 1-\delta .
  \end{equation*}
  Finally, we have
  \begin{equation*}
    \sqrt{\epsilon nk}+k+\sqrt{\epsilon n \ln \left(\frac{1}{\delta }\right)}+\ln \left(\frac{1}{\delta }\right)+\epsilon n \ln \left(\frac{1}{\epsilon }\right)\lesssim \epsilon n \ln \left(\frac{1}{\epsilon }\right)+k+\ln \left(\frac{1}{\delta }\right).
  \end{equation*}
  The proof is completed by taking the supreme over $v \in \mathbb{S}^{d-1}$.
\end{proof}

\end{document}